\newtheorem{theorem}{Theorem}
\newtheorem{axiom}[theorem]{Axiom}
\newtheorem{condition}[theorem]{Condition}
\newtheorem{conjecture}[theorem]{Conjecture}
\newtheorem{corollary}[theorem]{Corollary}
\newtheorem{definition}[theorem]{Definition}
\newtheorem{example}[theorem]{Example}
\newtheorem{exercise}[theorem]{Exercise}
\newtheorem{lemma}[theorem]{Lemma}
\newtheorem{proposition}[theorem]{Proposition}
\newtheorem{remark}[theorem]{Remark}
\newenvironment{proof}[1][Proof]{\noindent\textbf{#1.} }{\ \rule{0.5em}{0.5em}}
\chardef\@x10\chardef\@xv60
\def\tcitime{
\def\@time{%
  \@minute\time\@hour\@minute\divide\@hour\@xv
  \ifnum\@hour<\@x 0\fi\the\@hour:%
  \multiply\@hour\@xv\advance\@minute-\@hour
  \ifnum\@minute<\@x 0\fi\the\@minute
  }}%
\def\x@hyperref#1#2#3{%
   \catcode`\~ = 12
   \catcode`\$ = 12
   \catcode`\_ = 12
   \catcode`\# = 12
   \catcode`\& = 12
   \y@hyperref{#1}{#2}{#3}%
}
\def\y@hyperref#1#2#3#4{%
   #2\ref{#4}#3
   \catcode`\~ = 13
   \catcode`\$ = 3
   \catcode`\_ = 8
   \catcode`\# = 6
   \catcode`\& = 4
}
\def\QCTOpt[#1]#2{%
  \def\QCTOptB{#1}
  \def\QCTOptA{#2}
}
\def\QCTNOpt#1{%
  \def\QCTOptA{#1}
  \let\QCTOptB\empty
}
\def\Qct{%
  \@ifnextchar[{%
    \QCTOpt}{\QCTNOpt}
}
\def\QCBOpt[#1]#2{%
  \def\QCBOptB{#1}%
  \def\QCBOptA{#2}%
}
\def\QCBNOpt#1{%
  \def\QCBOptA{#1}%
  \let\QCBOptB\empty
}
\def\Qcb{%
  \@ifnextchar[{%
    \QCBOpt}{\QCBNOpt}%
}
\def\PrepCapArgs{%
  \ifx\QCBOptA\empty
    \ifx\QCTOptA\empty
      {}%
    \else
      \ifx\QCTOptB\empty
        {\QCTOptA}%
      \else
        [\QCTOptB]{\QCTOptA}%
      \fi
    \fi
  \else
    \ifx\QCBOptA\empty
      {}%
    \else
      \ifx\QCBOptB\empty
        {\QCBOptA}%
      \else
        [\QCBOptB]{\QCBOptA}%
      \fi
    \fi
  \fi
}
\def\GRAPHICSPS#1{%
 \ifcase\GRAPHICSTYPE
   \special{ps: #1}%
 \or
   \special{language "PS", include "#1"}%
 \fi
}%
\def\graffile#1#2#3#4{%
    \bgroup
       \@inlabelfalse
       \leavevmode
       \@ifundefined{bbl@deactivate}{\def~{\string~}}{\activesoff}%
        \raise -#4 \BOXTHEFRAME{%
           \hbox to #2{\raise #3\hbox to #2{\null #1\hfil}}}%
    \egroup
}%
\def\draftbox#1#2#3#4{%
 \leavevmode\raise -#4 \hbox{%
  \frame{\rlap{\protect\tiny #1}\hbox to #2%
   {\vrule height#3 width\z@ depth\z@\hfil}%
  }%
 }%
}%
\let\nographics=\@msidraft
\newif\ifwasdraft
\def\GRAPHIC#1#2#3#4#5{%
   \ifnum\@msidraft=\@ne\draftbox{#2}{#3}{#4}{#5}%
   \else\graffile{#1}{#3}{#4}{#5}%
   \fi
}
\def\addtoLaTeXparams#1{%
    \edef\LaTeXparams{\LaTeXparams #1}}%
\newif\ifBoxFrame \BoxFramefalse
\newif\ifOverFrame \OverFramefalse
\newif\ifUnderFrame \UnderFramefalse
\def\BOXTHEFRAME#1{%
   \hbox{%
      \ifBoxFrame
         \frame{#1}%
      \else
         {#1}%
      \fi
   }%
}
\def\doFRAMEparams#1{\BoxFramefalse\OverFramefalse\UnderFramefalse\readFRAMEparams#1\end}%
\def\readFRAMEparams#1{%
 \ifx#1\end%
  \let\next=\relax
  \else
  \ifx#1i\dispkind=\z@\fi
  \ifx#1d\dispkind=\@ne\fi
  \ifx#1f\dispkind=\tw@\fi
  \ifx#1t\addtoLaTeXparams{t}\fi
  \ifx#1b\addtoLaTeXparams{b}\fi
  \ifx#1p\addtoLaTeXparams{p}\fi
  \ifx#1h\addtoLaTeXparams{h}\fi
  \ifx#1X\BoxFrametrue\fi
  \ifx#1O\OverFrametrue\fi
  \ifx#1U\UnderFrametrue\fi
  \ifx#1w
    \ifnum\@msidraft=1\wasdrafttrue\else\wasdraftfalse\fi
    \@msidraft=\@ne
  \fi
  \let\next=\readFRAMEparams
  \fi
 \next
 }%
\def\IFRAME#1#2#3#4#5#6{%
      \bgroup
      \let\QCTOptA\empty
      \let\QCTOptB\empty
      \let\QCBOptA\empty
      \let\QCBOptB\empty
      #6%
      \parindent=0pt
      \leftskip=0pt
      \rightskip=0pt
      \setbox0=\hbox{\QCBOptA}%
      \@tempdima=#1\relax
      \ifOverFrame
          \typeout{This is not implemented yet}%
          \show\HELP
      \else
         \ifdim\wd0>\@tempdima
            \advance\@tempdima by \@tempdima
            \ifdim\wd0 >\@tempdima
               \setbox1 =\vbox{%
                  \unskip\hbox to \@tempdima{\hfill\GRAPHIC{#5}{#4}{#1}{#2}{#3}\hfill}%
                  \unskip\hbox to \@tempdima{\parbox[b]{\@tempdima}{\QCBOptA}}%
               }%
               \wd1=\@tempdima
            \else
               \textwidth=\wd0
               \setbox1 =\vbox{%
                 \noindent\hbox to \wd0{\hfill\GRAPHIC{#5}{#4}{#1}{#2}{#3}\hfill}\\%
                 \noindent\hbox{\QCBOptA}%
               }%
               \wd1=\wd0
            \fi
         \else
            \ifdim\wd0>0pt
              \hsize=\@tempdima
              \setbox1=\vbox{%
                \unskip\GRAPHIC{#5}{#4}{#1}{#2}{0pt}%
                \break
                \unskip\hbox to \@tempdima{\hfill \QCBOptA\hfill}%
              }%
              \wd1=\@tempdima
           \else
              \hsize=\@tempdima
              \setbox1=\vbox{%
                \unskip\GRAPHIC{#5}{#4}{#1}{#2}{0pt}%
              }%
              \wd1=\@tempdima
           \fi
         \fi
         \@tempdimb=\ht1
         \advance\@tempdimb by -#2
         \advance\@tempdimb by #3
         \leavevmode
         \raise -\@tempdimb \hbox{\box1}%
      \fi
      \egroup%
}%
\def\DFRAME#1#2#3#4#5{%
  \hfil\break
  \bgroup
     \leftskip\@flushglue
     \rightskip\@flushglue
     \parindent\z@
     \parfillskip\z@skip
     \let\QCTOptA\empty
     \let\QCTOptB\empty
     \let\QCBOptA\empty
     \let\QCBOptB\empty
     \vbox\bgroup
        \ifOverFrame
           #5\QCTOptA\par
        \fi
        \GRAPHIC{#4}{#3}{#1}{#2}{\z@}%
        \ifUnderFrame
           \break#5\QCBOptA
        \fi
     \egroup
   \egroup
   \break
}%
\def\FFRAME#1#2#3#4#5#6#7{%
  \@ifundefined{floatstyle}
    {
     \begin{figure}[#1]%
    }
    {
     \ifx#1h
      \begin{figure}[H]%
     \else
      \begin{figure}[#1]%
     \fi
    }
  \let\QCTOptA\empty
  \let\QCTOptB\empty
  \let\QCBOptA\empty
  \let\QCBOptB\empty
  \ifOverFrame
    #4
    \ifx\QCTOptA\empty
    \else
      \ifx\QCTOptB\empty
        \caption{\QCTOptA}%
      \else
        \caption[\QCTOptB]{\QCTOptA}%
      \fi
    \fi
    \ifUnderFrame\else
      \label{#5}%
    \fi
  \else
    \UnderFrametrue%
  \fi
  \begin{center}\GRAPHIC{#7}{#6}{#2}{#3}{\z@}\end{center}%
  \ifUnderFrame
    #4
    \ifx\QCBOptA\empty
      \caption{}%
    \else
      \ifx\QCBOptB\empty
        \caption{\QCBOptA}%
      \else
        \caption[\QCBOptB]{\QCBOptA}%
      \fi
    \fi
    \label{#5}%
  \fi
  \end{figure}%
 }%
\def\makeactives{
  \catcode`\"=\active
  \catcode`\;=\active
  \catcode`\:=\active
  \catcode`\'=\active
  \catcode`\~=\active
}
   \gdef\activesoff{%
      \def"{\string"}%
      \def;{\string;}%
      \def:{\string:}%
      \def'{\string'}%
      \def~{\string~}%
    }
\def\FRAME#1#2#3#4#5#6#7#8{%
 \bgroup
 \ifnum\@msidraft=\@ne
   \wasdrafttrue
 \else
   \wasdraftfalse%
 \fi
 \def\LaTeXparams{}%
 \dispkind=\z@
 \def\LaTeXparams{}%
 \doFRAMEparams{#1}%
 \ifnum\dispkind=\z@\IFRAME{#2}{#3}{#4}{#7}{#8}{#5}\else
  \ifnum\dispkind=\@ne\DFRAME{#2}{#3}{#7}{#8}{#5}\else
   \ifnum\dispkind=\tw@
    \edef\@tempa{\noexpand\FFRAME{\LaTeXparams}}%
    \@tempa{#2}{#3}{#5}{#6}{#7}{#8}%
    \fi
   \fi
  \fi
  \ifwasdraft\@msidraft=1\else\@msidraft=0\fi{}%
  \egroup
 }%
\def\TEXUX#1{"texux"}
\long\def\QQQ#1#2{%
     \long\expandafter\def\csname#1\endcsname{#2}}%
\long\def\QQA#1#2{}%
\def\QTR#1#2{{\csname#1\endcsname {#2}}}%
\def\EXPAND#1[#2]#3{}%
\def\NOEXPAND#1[#2]#3{}%
\def\LaTeXparent#1{}%
\def\ChildStyles#1{}%
\def\ChildDefaults#1{}%
\def\QTagDef#1#2#3{}%
  \providecommand{\UNICODE}[2][]{\protect\rule{.1in}{.1in}}
  \providecommand{\U}[1]{\protect\rule{.1in}{.1in}}
\def\QQfnmark#1{\footnotemark}
 \def\abstract{%
  \if@twocolumn
   \section*{Abstract (Not appropriate in this style!)}%
   \else \small
   \begin{center}{\bf Abstract\vspace{-.5em}\vspace{\z@}}\end{center}%
   \quotation
   \fi
  }%
   \def\registered{\relax\ifmmode{}\r@gistered
                    \else$\m@th\r@gistered$\fi}%
 \def\r@gistered{^{\ooalign
  {\hfil\raise.07ex\hbox{$\scriptstyle\rm\text{R}$}\hfil\crcr
  \mathhexbox20D}}}}{}%
\newdimen\theight
\def\newfmtname{LaTeX2e}
  \DeclareOldFontCommand{\rm}{\normalfont\rmfamily}{\mathrm}
  \DeclareOldFontCommand{\sf}{\normalfont\sffamily}{\mathsf}
  \DeclareOldFontCommand{\tt}{\normalfont\ttfamily}{\mathtt}
  \DeclareOldFontCommand{\bf}{\normalfont\bfseries}{\mathbf}
  \DeclareOldFontCommand{\it}{\normalfont\itshape}{\mathit}
  \DeclareOldFontCommand{\sl}{\normalfont\slshape}{\@nomath\sl}
  \DeclareOldFontCommand{\sc}{\normalfont\scshape}{\@nomath\sc}
\def\alpha{{\Greekmath 010B}}%
\def\beta{{\Greekmath 010C}}%
\def\gamma{{\Greekmath 010D}}%
\def\delta{{\Greekmath 010E}}%
\def\epsilon{{\Greekmath 010F}}%
\def\zeta{{\Greekmath 0110}}%
\def\eta{{\Greekmath 0111}}%
\def\theta{{\Greekmath 0112}}%
\def\iota{{\Greekmath 0113}}%
\def\kappa{{\Greekmath 0114}}%
\def\lambda{{\Greekmath 0115}}%
\def\mu{{\Greekmath 0116}}%
\def\nu{{\Greekmath 0117}}%
\def\xi{{\Greekmath 0118}}%
\def\pi{{\Greekmath 0119}}%
\def\rho{{\Greekmath 011A}}%
\def\sigma{{\Greekmath 011B}}%
\def\tau{{\Greekmath 011C}}%
\def\upsilon{{\Greekmath 011D}}%
\def\phi{{\Greekmath 011E}}%
\def\chi{{\Greekmath 011F}}%
\def\psi{{\Greekmath 0120}}%
\def\omega{{\Greekmath 0121}}%
\def\varepsilon{{\Greekmath 0122}}%
\def\vartheta{{\Greekmath 0123}}%
\def\varpi{{\Greekmath 0124}}%
\def\varrho{{\Greekmath 0125}}%
\def\varsigma{{\Greekmath 0126}}%
\def\varphi{{\Greekmath 0127}}%
\def\nabla{{\Greekmath 0272}}
\def\FindBoldGroup{%
   {\setbox0=\hbox{$\mathbf{x\global\edef\theboldgroup{\the\mathgroup}}$}}%
}
\def\Greekmath#1#2#3#4{%
    \if@compatibility
        \ifnum\mathgroup=\symbold
           \mathchoice{\mbox{\boldmath$\displaystyle\mathchar"#1#2#3#4$}}%
                      {\mbox{\boldmath$\textstyle\mathchar"#1#2#3#4$}}%
                      {\mbox{\boldmath$\scriptstyle\mathchar"#1#2#3#4$}}%
                      {\mbox{\boldmath$\scriptscriptstyle\mathchar"#1#2#3#4$}}%
        \else
           \mathchar"#1#2#3#4%
        \fi
    \else
        \FindBoldGroup
        \ifnum\mathgroup=\theboldgroup 
           \mathchoice{\mbox{\boldmath$\displaystyle\mathchar"#1#2#3#4$}}%
                      {\mbox{\boldmath$\textstyle\mathchar"#1#2#3#4$}}%
                      {\mbox{\boldmath$\scriptstyle\mathchar"#1#2#3#4$}}%
                      {\mbox{\boldmath$\scriptscriptstyle\mathchar"#1#2#3#4$}}%
        \else
           \mathchar"#1#2#3#4%
        \fi
      \fi}
\newif\ifGreekBold  \GreekBoldfalse
\let\SAVEPBF=\pbf
\def\pbf{\GreekBoldtrue\SAVEPBF}%
  \newcounter{equationnumber}
  \def\mathletters{%
     \addtocounter{equation}{1}
     \edef\@currentlabel{\theequation}%
     \setcounter{equationnumber}{\c@equation}
     \setcounter{equation}{0}%
     \edef\theequation{\@currentlabel\noexpand\alph{equation}}%
  }
    \def\BibTeX{{\rm B\kern-.05em{\sc i\kern-.025em b}\kern-.08em
                 T\kern-.1667em\lower.7ex\hbox{E}\kern-.125emX}}}{}%
\def\AmS{{\protect\usefont{OMS}{cmsy}{m}{n}%
                A\kern-.1667em\lower.5ex\hbox{M}\kern-.125emS}}}{}%
\def\@@eqncr{\let\@tempa\relax
    \ifcase\@eqcnt \def\@tempa{& & &}\or \def\@tempa{& &}%
      \else \def\@tempa{&}\fi
     \@tempa
     \if@eqnsw
        \iftag@
           \@taggnum
        \else
           \@eqnnum\stepcounter{equation}%
        \fi
     \fi
     \global\tag@false
     \global\@eqnswtrue
     \global\@eqcnt\z@\cr}
\def\TCItag{\@ifnextchar*{\@TCItagstar}{\@TCItag}}
\def\@TCItag#1{%
    \global\tag@true
    \global\def\@taggnum{(#1)}}
\def\@TCItagstar*#1{%
    \global\tag@true
    \global\def\@taggnum{#1}}
\def\diint{\mathop{\displaystyle \iint}}%
\def\ExitTCILatex{\makeatother }
\let\DOTSI\relax
\def\RIfM@{\relax\ifmmode}%
\def\FN@{\futurelet\next}%
\def\iint{\DOTSI\intno@\tw@\FN@\ints@}%
\def\iiint{\DOTSI\intno@\thr@@\FN@\ints@}%
\def\iiiint{\DOTSI\intno@4 \FN@\ints@}%
\def\idotsint{\DOTSI\intno@\z@\FN@\ints@}%
\def\ints@{\findlimits@\ints@@}%
\newif\iflimtoken@
\newif\iflimits@
\def\findlimits@{\limtoken@true\ifx\next\limits\limits@true
 \else\ifx\next\nolimits\limits@false\else
 \limtoken@false\ifx\ilimits@\nolimits\limits@false\else
 \ifinner\limits@false\else\limits@true\fi\fi\fi\fi}%
\def\multint@{\int\ifnum\intno@=\z@\intdots@                          
 \else\intkern@\fi                                                    
 \ifnum\intno@>\tw@\int\intkern@\fi                                   
 \ifnum\intno@>\thr@@\int\intkern@\fi                                 
 \int}
\def\multintlimits@{\intop\ifnum\intno@=\z@\intdots@\else\intkern@\fi
 \ifnum\intno@>\tw@\intop\intkern@\fi
 \ifnum\intno@>\thr@@\intop\intkern@\fi\intop}%
\def\intic@{%
    \mathchoice{\hskip.5em}{\hskip.4em}{\hskip.4em}{\hskip.4em}}%
\def\negintic@{\mathchoice
 {\hskip-.5em}{\hskip-.4em}{\hskip-.4em}{\hskip-.4em}}%
\def\ints@@{\iflimtoken@                                              
 \def\ints@@@{\iflimits@\negintic@
   \mathop{\intic@\multintlimits@}\limits                             
  \else\multint@\nolimits\fi                                          
  \eat@}
 \else                                                                
 \def\ints@@@{\iflimits@\negintic@
  \mathop{\intic@\multintlimits@}\limits\else
  \multint@\nolimits\fi}\fi\ints@@@}%
\def\intkern@{\mathchoice{\!\!\!}{\!\!}{\!\!}{\!\!}}%
\def\plaincdots@{\mathinner{\cdotp\cdotp\cdotp}}%
\def\intdots@{\mathchoice{\plaincdots@}%
 {{\cdotp}\mkern1.5mu{\cdotp}\mkern1.5mu{\cdotp}}%
 {{\cdotp}\mkern1mu{\cdotp}\mkern1mu{\cdotp}}%
 {{\cdotp}\mkern1mu{\cdotp}\mkern1mu{\cdotp}}}%
\def\RIfM@{\relax\protect\ifmmode}
\def\text{\RIfM@\expandafter\text@\else\expandafter\mbox\fi}
\let\nfss@text\text
\def\text@#1{\mathchoice
   {\textdef@\displaystyle\f@size{#1}}%
   {\textdef@\textstyle\tf@size{\firstchoice@false #1}}%
   {\textdef@\textstyle\sf@size{\firstchoice@false #1}}%
   {\textdef@\textstyle \ssf@size{\firstchoice@false #1}}%
   \glb@settings}
\def\textdef@#1#2#3{\hbox{{%
                    \everymath{#1}%
                    \let\f@size#2\selectfont
                    #3}}}
\newif\iffirstchoice@
\def\Let@{\relax\iffalse{\fi\let\\=\cr\iffalse}\fi}%
\def\vspace@{\def\vspace##1{\crcr\noalign{\vskip##1\relax}}}%
\def\multilimits@{\bgroup\vspace@\Let@
 \baselineskip\fontdimen10 \scriptfont\tw@
 \advance\baselineskip\fontdimen12 \scriptfont\tw@
 \lineskip\thr@@\fontdimen8 \scriptfont\thr@@
 \lineskiplimit\lineskip
 \vbox\bgroup\ialign\bgroup\hfil$\m@th\scriptstyle{##}$\hfil\crcr}%
\def\Sb{_\multilimits@}%
\def\endSb{\crcr\egroup\egroup\egroup}%
\def\Sp{^\multilimits@}%
\newdimen\ex@
\def\rightarrowfill@#1{$#1\m@th\mathord-\mkern-6mu\cleaders
 \hbox{$#1\mkern-2mu\mathord-\mkern-2mu$}\hfill
 \mkern-6mu\mathord\rightarrow$}%
\def\leftarrowfill@#1{$#1\m@th\mathord\leftarrow\mkern-6mu\cleaders
 \hbox{$#1\mkern-2mu\mathord-\mkern-2mu$}\hfill\mkern-6mu\mathord-$}%
\def\leftrightarrowfill@#1{$#1\m@th\mathord\leftarrow
\mkern-6mu\cleaders
 \hbox{$#1\mkern-2mu\mathord-\mkern-2mu$}\hfill
 \mkern-6mu\mathord\rightarrow$}%
\def\overrightarrow{\mathpalette\overrightarrow@}%
\def\overrightarrow@#1#2{\vbox{\ialign{##\crcr\rightarrowfill@#1\crcr
 \noalign{\kern-\ex@\nointerlineskip}$\m@th\hfil#1#2\hfil$\crcr}}}%
\def\overleftarrow{\mathpalette\overleftarrow@}%
\def\overleftarrow@#1#2{\vbox{\ialign{##\crcr\leftarrowfill@#1\crcr
 \noalign{\kern-\ex@\nointerlineskip}$\m@th\hfil#1#2\hfil$\crcr}}}%
\def\overleftrightarrow{\mathpalette\overleftrightarrow@}%
\def\overleftrightarrow@#1#2{\vbox{\ialign{##\crcr
   \leftrightarrowfill@#1\crcr
 \noalign{\kern-\ex@\nointerlineskip}$\m@th\hfil#1#2\hfil$\crcr}}}%
\def\underrightarrow{\mathpalette\underrightarrow@}%
\def\underrightarrow@#1#2{\vtop{\ialign{##\crcr$\m@th\hfil#1#2\hfil
  $\crcr\noalign{\nointerlineskip}\rightarrowfill@#1\crcr}}}%
\def\underleftarrow{\mathpalette\underleftarrow@}%
\def\underleftarrow@#1#2{\vtop{\ialign{##\crcr$\m@th\hfil#1#2\hfil
  $\crcr\noalign{\nointerlineskip}\leftarrowfill@#1\crcr}}}%
\def\underleftrightarrow{\mathpalette\underleftrightarrow@}%
\def\underleftrightarrow@#1#2{\vtop{\ialign{##\crcr$\m@th
  \hfil#1#2\hfil$\crcr
 \noalign{\nointerlineskip}\leftrightarrowfill@#1\crcr}}}%
\def\qopnamewl@#1{\mathop{\operator@font#1}\nlimits@}
\let\nlimits@\displaylimits
\def\setboxz@h{\setbox\z@\hbox}
\def\varlim@#1#2{\mathop{\vtop{\ialign{##\crcr
 \hfil$#1\m@th\operator@font lim$\hfil\crcr
 \noalign{\nointerlineskip}#2#1\crcr
 \noalign{\nointerlineskip\kern-\ex@}\crcr}}}}
 \def\rightarrowfill@#1{\m@th\setboxz@h{$#1-$}\ht\z@\z@
  $#1\copy\z@\mkern-6mu\cleaders
  \hbox{$#1\mkern-2mu\box\z@\mkern-2mu$}\hfill
  \mkern-6mu\mathord\rightarrow$}
\def\leftarrowfill@#1{\m@th\setboxz@h{$#1-$}\ht\z@\z@
  $#1\mathord\leftarrow\mkern-6mu\cleaders
  \hbox{$#1\mkern-2mu\copy\z@\mkern-2mu$}\hfill
  \mkern-6mu\box\z@$}
\def\projlim{\qopnamewl@{proj\,lim}}
\def\injlim{\qopnamewl@{inj\,lim}}
\def\varinjlim{\mathpalette\varlim@\rightarrowfill@}
\def\varprojlim{\mathpalette\varlim@\leftarrowfill@}
\def\varliminf{\mathpalette\varliminf@{}}
\def\varliminf@#1{\mathop{\underline{\vrule\@depth.2\ex@\@width\z@
   \hbox{$#1\m@th\operator@font lim$}}}}
\def\varlimsup{\mathpalette\varlimsup@{}}
\def\varlimsup@#1{\mathop{\overline
  {\hbox{$#1\m@th\operator@font lim$}}}}
\def\align{\@verbatim \frenchspacing\@vobeyspaces \@alignverbatim
You are using the "align" environment in a style in which it is not defined.}
\let\csname endalign*\endcsname =\endtrivlist
\def\alignat{\@verbatim \frenchspacing\@vobeyspaces \@alignatverbatim
You are using the "alignat" environment in a style in which it is not defined.}
\let\csname endalignat*\endcsname =\endtrivlist
\def\xalignat{\@verbatim \frenchspacing\@vobeyspaces \@xalignatverbatim
You are using the "xalignat" environment in a style in which it is not defined.}
\let\csname endxalignat*\endcsname =\endtrivlist
\def\gather{\@verbatim \frenchspacing\@vobeyspaces \@gatherverbatim
You are using the "gather" environment in a style in which it is not defined.}
\let\csname endgather*\endcsname =\endtrivlist
\def\multiline{\@verbatim \frenchspacing\@vobeyspaces \@multilineverbatim
You are using the "multiline" environment in a style in which it is not defined.}
\let\csname endmultiline*\endcsname =\endtrivlist
\def\arrax{\@verbatim \frenchspacing\@vobeyspaces \@arraxverbatim
You are using a type of "array" construct that is only allowed in AmS-LaTeX.}
\def\tabulax{\@verbatim \frenchspacing\@vobeyspaces \@tabulaxverbatim
You are using a type of "tabular" construct that is only allowed in AmS-LaTeX.}
\let\csname endarrax*\endcsname =\endtrivlist
\let\csname endtabulax*\endcsname =\endtrivlist
 \def\endequation{%
     \ifmmode\ifinner 
      \iftag@
        \addtocounter{equation}{-1} 
        $\hfil
           \displaywidth\linewidth\@taggnum\egroup \endtrivlist
        \global\tag@false
        \global\@ignoretrue
      \else
        $\hfil
           \displaywidth\linewidth\@eqnnum\egroup \endtrivlist
        \global\tag@false
        \global\@ignoretrue
      \fi
     \else
      \iftag@
        \addtocounter{equation}{-1} 
        \eqno \hbox{\@taggnum}
        \global\tag@false%
        $$\global\@ignoretrue
      \else
        \eqno \hbox{\@eqnnum}
        $$\global\@ignoretrue
      \fi
     \fi\fi
 }
 \newif\iftag@ \tag@false
 \def\TCItag{\@ifnextchar*{\@TCItagstar}{\@TCItag}}
 \def\@TCItag#1{%
     \global\tag@true
     \global\def\@taggnum{(#1)}}
 \def\@TCItagstar*#1{%
     \global\tag@true
     \global\def\@taggnum{#1}}
     \def\tag{\@ifnextchar*{\@tagstar}{\@tag}}
     \def\@tag#1{%
         \global\tag@true
         \global\def\@taggnum{(#1)}}
     \def\@tagstar*#1{%
         \global\tag@true
         \global\def\@taggnum{#1}}
\begin{document}

\title{Isoperimetry and Rough Path Regularity}
\author{Peter Friz\thanks{%
Corresponding author. Department of Pure Mathematics and Mathematical
Statistics, University of Cambridge. Email: P.K.Friz@statslab.cam.ac.uk. }\ 
\thanks{%
Leverhulme Fellow.} \and Harald Oberhauser\thanks{%
Department of Pure Mathematics and Mathematical Statistics, University of
Cambridge. }}
\maketitle

\begin{abstract}
Optimal sample path properties of stochastic processes often involve
generalized H\"{o}lder- or variation norms. Following a classical result of
Taylor, the exact variation of Brownian motion is measured in terms of $\psi
\left( x\right) \equiv $ $x^{2}/\log \log \left( 1/x\right) $ near $0+$.
Such $\psi $-variation results extend to classes of processes with values in
abstract metric spaces. (No Gaussian or Markovian properties are assumed.)
To establish integrability properties of the $\psi $-variation we turn to a
large class of Gaussian rough paths (e.g.\ Brownian motion and L\'{e}vy's
area viewed as a process in a Lie group) and prove Gaussian integrability
properties using Borell's inequality on abstract Wiener spaces. The interest
in such results is that they are compatible with rough path theory and yield
certain sharp regularity and integrability properties (for iterated
Stratonovich integrals, for example) which would be difficult to obtain
otherwise. At last, $\psi $-variation is identified as robust regularity
property of solutions to (random) rough differential equations beyond
semimartingales.
\end{abstract}

\section{Introduction}

Optimal sample path properties of stochastic processes often involve
generalized H\"{o}lder- or variation norms. For Brownian motion these
results are classical and known as \textit{L\'{e}vy's modulus} and \textit{%
Taylor's variation} regularity respectively. Given an arbitrary stochastic
process $X\left( \omega \right) :\left[ 0,1\right] \rightarrow \left(
E,d\right) $, we give a criterion (condition \ref{ConditionExp} below) which
implies (via Garsia-Rodemich-Rumsey) Gauss tails for a L\'{e}vy-type $%
\varphi $-modulus "norm\footnote{%
There is no linear space here but the analogy to well-known (semi)norms is
strong enough to convince us to use the word "norm". }". In the very same
setting, we show that $X$ has a.s.\ finite $\psi $-variation "norm" of
Taylor-type. In general, $X$ need not be of a $\psi ^{-1}$-modulus and one
needs careful probabilistic arguments, adapted from Taylor \cite{Taylor72}
to our setting in theorem \ref{TaylorVar}; these arguments are ill-suited to
extract any integrability of the $\psi $-variation norms.

If $X$ is a real-valued Gaussian process and the function $\psi :\left[
0,\infty \right) \rightarrow \left[ 0,\infty \right) $ is reasonably behaved
(in particular, convex) one deals with a genuine (semi)norm and a classical
result of Fernique implies (Gaussian) integrability. A more interesting
example - and the motivation of the present work - are $\mathbb{R}^{d}$%
-valued centered \textit{Gaussian rough paths} i.e.\ processes enhanced with
a stochastic area process (such as \textit{L\'{e}vy's area} in the case of
Brownian motion). We view processes enhanced with their stochastic area as
processes with values in $G^{2}\left( \mathbb{R}^{d}\right) \cong \mathbb{R}%
^{d}\oplus so\left( d\right) $, the step-$2$ nilpotent group equipped with
Carnot-Caratheodory (CC) metric. The resulting H\"{o}lder and variation
spaces (which play a fundamental r\^{o}le in \textit{rough path theory})
have norms involving stochastic areas and hence do \textit{not} allow us to
use Fernique's result. Integrability properties of Wiener-It\^{o} chaos
(e.g.\ \cite[Thm 4.1]{ledoux-1996}) allow to go a bit further but ultimately
fail to deal with the non-linear structure of the H\"{o}lder and variation
"norms" of rough paths. To wit, the fine regularity properties required in
rough path theory rely crucially on the cancellations on the right-hand-side
of (\ref{AreaNotLinear}) below\footnote{%
With focus on H\"{o}lder regularity, a "linear" brute-force approach using
integrability of Banach-space valued Wiener-It\^{o} chaos leads to a Gauss
tail of 
\begin{equation*}
\sup_{0\leq s<t\leq 1}\frac{\left\vert A_{s,t}\right\vert ^{1/2}}{\left\vert
t-s\right\vert ^{\alpha }}
\end{equation*}%
for $\alpha <1/4$, compare with the discussion preceeding (\ref%
{AreaNotLinear}). In order to apply rough path theory, however, it is
crucial to take $\alpha >1/3$ so that $\left[ p\right] =2$ with $p=1/\alpha $
and $\alpha \in \left( 1/3,1/2\right) $.}. We overcome this difficulties
with Borell's isoperimetric inequality which leads us to a \textit{%
generalized Fernique estimate} (Theorem \ref{ThGeneralizedFernique}). As it
may well be useful in other situations, we state and prove it in its natural
setting of abstract Wiener spaces.\bigskip

Keeping the recalls on rough path theory to a minimum, we remind the reader
that rough paths take values in nilpotent groups with path regularity tied
to the degree of nilpotency. For instance, Brownian motion and L\'{e}vy's
area have (w.r.t.\ CC\ metric) finite $p=\left( 2+\varepsilon \right) $%
-variation and so one has to work in the group of step $\left[ p\right] =2$
nilpotency. The main result in rough path theory, due to T. Lyons, is that
higher iterated integrals, stochastic integrals of $1$-forms and the It\^{o}
map (i.e.\ the solution map to stochastic differential equations) all become
continuous and deterministic functions of \textit{Brownian motion and L\'{e}%
vy's area.} The study of $\psi $-variation is then natural from several
points of view;

\begin{itemize}
\item[(i)] it allows us to establish the definite variational regularity of
Gaussian rough paths;

\item[(ii)] $\psi $-variation regularity is the key to optimal regularity
results for the coefficients of differential equations driven by rough paths
(forthcoming work by A.M.Davie, not discussed here);

\item[(iii)] we shall see that rough path estimates, usually stated in $p$%
-variation, are valid in suitable $\psi $-variation.
\end{itemize}

It may be helpful to state some of the implications of this work without
using too much language of rough path theory and with focus on the simplest
possible Gaussian rough path: Brownian motion and L\'{e}vy area. For
instance, we have novel regularity and integrability properties of L\'{e}vy
area increments defined as 
\begin{equation*}
A_{s,t}=\frac{1}{2}\left( \int_{s}^{t}\left( B_{u}-B_{s}\right) \mathrm{d}%
\tilde{B}_{u}-\int_{s}^{t}\left( \tilde{B}_{u}-\tilde{B}_{s}\right) \mathrm{d%
}B_{u}\right)
\end{equation*}%
where $\left( B,\tilde{B}\right) $ is a $2$-dimensional standard Brownian
motion. The regularity result in the following theorem \ref{ThmLevyArea}
below must not be confused with the essentially trivial statement that $%
t\mapsto A_{0,t}$ has a.s.\ finite $\psi $-variation\footnote{$\left(
A_{0,t}:t\geq 0\right) $ is a continuous martingale, hence a time-change of
Brownian motion. Conclude with Taylor's variation regularity.}. The
situation is analogue to the subtle $\left\vert A_{s,t}\right\vert
^{1/2}\sim \left\vert t-s\right\vert ^{1/2-\varepsilon }$ versus the simple\ 
$\left\vert A_{t}-A_{s}\right\vert \sim \left\vert t-s\right\vert
^{1/2-\varepsilon }$, based on the cancellation taking place in the right
hand side of%
\begin{equation}
A_{s,t}=A_{t}-A_{s}-\frac{1}{2}\left( B_{s}\left( \tilde{B}_{t}-\tilde{B}%
_{s}\right) -\tilde{B}_{s}\left( B_{t}-B_{s}\right) \right) .
\label{AreaNotLinear}
\end{equation}

\begin{theorem}[Optimal regularity and integrability of L\'{e}vy's Area]
\label{ThmLevyArea}Set%
\begin{eqnarray*}
V_{\psi \circ \sqrt{\cdot }\text{-var;}\left[ 0,1\right] }\left( A\right)
&\equiv &\sup_{\left( t_{i}\right) \subset \left[ 0,1\right] }\sum_{i}\psi
\left( \left\vert A_{t_{i},t_{i+1}}\right\vert ^{1/2}\right) , \\
\left\vert A\right\vert _{\psi \circ \sqrt{\cdot }\text{-var;}\left[ 0,1%
\right] } &\equiv &\inf \left\{ \varepsilon >0:V_{\psi \circ \sqrt{\cdot }%
\text{-var;}\left[ 0,1\right] }\left( A/\varepsilon ^{2}\right) \leq
1\right\} .
\end{eqnarray*}%
Let $\psi :\left[ 0,\infty \right) \rightarrow \left[ 0,\infty \right) ,$ $%
\psi \left( 0\right) =0$ be continuous, strictly increasing, onto and such
that $\psi \left( x\right) =x^{2}/\log \log \left( 1/x\right) $ near $0+$.
Then $\left\vert A\right\vert _{\psi \circ \sqrt{\cdot }\text{-var;}\left[
0,1\right] }<\infty $ a.s.\ and has a Gauss tail. Moreover, this $\psi $%
-variation is optimal in the sense that for any $\tilde{\psi}:\left[
0,\infty \right) \rightarrow \left[ 0,\infty \right) $ with $%
\lim_{x\rightarrow 0}\tilde{\psi}\left( x\right) /\psi \left( x\right)
=\infty $ we have almost surely%
\begin{equation*}
V_{\tilde{\psi}\circ \sqrt{\cdot }\text{-var;}\left[ 0,1\right] }\left(
A\right) =\left\vert A\right\vert _{\tilde{\psi}\circ \sqrt{\cdot }\text{%
-var;}\left[ 0,1\right] }=+\infty .
\end{equation*}
\end{theorem}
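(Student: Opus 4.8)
The plan is to deduce both assertions from the $\psi$-variation machinery already in place, applied to the enhanced Brownian motion $\mathbf{B}=\left( B,\tilde{B},A\right) $ regarded as a process with values in the step-$2$ group $\left( G^{2}\left( \mathbb{R}^{2}\right) ,d\right) $ with its CC metric. The bridge is the homogeneous two-sided estimate
\[
c\left\vert A_{s,t}\right\vert ^{1/2}\leq d\left( \mathbf{B}_{s},\mathbf{B}_{t}\right) \leq C\left( \left\vert B_{s,t}\right\vert +\left\vert \tilde{B}_{s,t}\right\vert +\left\vert A_{s,t}\right\vert ^{1/2}\right) ,
\]
which expresses that the area coordinate of the CC norm is exactly $\left\vert A_{s,t}\right\vert ^{1/2}$ up to constants. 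Through the right-hand inequality, finiteness of the CC $\psi$-variation of $\mathbf{B}$ will transfer to the $\psi \circ \sqrt{\cdot}$-variation of $A$.

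For the almost-sure finiteness I would invoke Theorem \ref{TaylorVar}. The enhanced Brownian motion is self-similar of CC order $1/2$, $d\left( \mathbf{B}_{s},\mathbf{B}_{t}\right) \overset{d}{=}\left\vert t-s\right\vert ^{1/2}d\left( \mathbf{B}_{0},\mathbf{B}_{1}\right) $; its increments $\mathbf{B}_{s}^{-1}\mathbf{B}_{t}$ over disjoint intervals are independent, because $A_{s,t}$ depends only on the Brownian increments on $\left[ s,t\right] $; and $d\left( \mathbf{B}_{0},\mathbf{B}_{1}\right) $ has a Gauss tail, $\left\vert A_{0,1}\right\vert $ being a finite Wiener chaos. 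These are exactly the ingredients behind condition \ref{ConditionExp} with the Brownian scaling function, for which $\psi \left( x\right) =x^{2}/\log \log \left( 1/x\right) $ is the associated Taylor function; Theorem \ref{TaylorVar} then yields $V_{\psi \text{-var}}\left( \mathbf{B};d\right) <\infty $ a.s., and the estimate above gives $V_{\psi \circ \sqrt{\cdot}\text{-var}}\left( A\right) <\infty $, i.e.\ $\left\vert A\right\vert _{\psi \circ \sqrt{\cdot}\text{-var}}<\infty $ a.s.

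The Gauss tail is where the new tool enters, and I expect it to be the main obstacle: I would apply the generalized Fernique estimate, Theorem \ref{ThGeneralizedFernique}, to the functional $\omega \mapsto \left\vert A\left( \omega \right) \right\vert _{\psi \circ \sqrt{\cdot}\text{-var}}$ on the abstract Wiener space carrying $\left( B,\tilde{B}\right) $. The normalisation $A\mapsto A/\varepsilon ^{2}$ makes this functional homogeneous of degree one under CC dilation, since $\left\vert A/\varepsilon ^{2}\right\vert ^{1/2}=\left\vert A\right\vert ^{1/2}/\varepsilon $, so it plays the role of a genuine homogeneous norm, and its a.s.\ finiteness is the previous step. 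The content one must verify is the behaviour under Cameron--Martin translation $\omega \mapsto \omega +h$: expanding the area of the shifted path produces the original area plus cross terms $\int \left( B-B_{s}\right) \mathrm{d}h$ and $\int \left( h-h_{s}\right) \mathrm{d}B$ together with the area of $h$, and because $h$ has finite $1$-variation these perturbations should be controlled in $\psi \circ \sqrt{\cdot}$-variation linearly in $\left\Vert h\right\Vert _{H}$. Borell's isoperimetric inequality then upgrades a.s.\ finiteness to the claimed Gauss tail. The delicacy is precisely this Cameron--Martin estimate, because one must respect the cancellative, non-linear structure of the area in (\ref{AreaNotLinear}): the crude Banach-valued chaos bound discussed in the introduction destroys the cancellation and yields only a suboptimal exponent, so the control has to be obtained directly with the correct degree-one homogeneity.

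Finally, for optimality I would argue directly on $A$, in the spirit of the lower-bound half of Theorem \ref{TaylorVar}. Using the exact scaling $A_{s,t}\overset{d}{=}\left\vert t-s\right\vert A_{0,1}$, the independence of $\left\{ A_{t_{i},t_{i+1}}\right\} $ over the dyadic intervals, and the Gaussian tail $\mathbb{P}\left( \left\vert A_{0,1}\right\vert ^{1/2}>x\right) \asymp e^{-cx^{2}}$ coming from the exponential tail of L\'{e}vy's area, a Borel--Cantelli and extreme-value computation produces, for any $\tilde{\psi}$ with $\tilde{\psi}/\psi \rightarrow \infty $ at $0$, nested partitions along which $\sum_{i}\tilde{\psi}\left( \left\vert A_{t_{i},t_{i+1}}\right\vert ^{1/2}\right) $ is unbounded. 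Since the $\psi$-variation is a supremum over partitions, this forces $V_{\tilde{\psi}\circ \sqrt{\cdot}\text{-var}}\left( A\right) =+\infty $ a.s., and the normalised quantity is then $+\infty $ as well. That this matches the upper bound confirms that $\psi \left( x\right) =x^{2}/\log \log \left( 1/x\right) $ is the critical Taylor function for $\left\vert A\right\vert ^{1/2}$.
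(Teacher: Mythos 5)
Your first step (a.s.\ finiteness via Theorem \ref{TaylorVar} applied to enhanced Brownian motion in the CC metric, together with $\vert A_{s,t}\vert ^{1/2}\lesssim d\left( \mathbf{B}_{s},\mathbf{B}_{t}\right) $) is correct and is essentially the paper's route. The Gauss-tail step, however, has a genuine gap: you apply Theorem \ref{ThGeneralizedFernique} to the area-only functional $f\left( \omega \right) =\vert A\left( \omega \right) \vert _{\psi \circ \sqrt{\cdot }\text{-var}}$, but hypothesis (\ref{Control_H_translate}) demands $\vert f\left( b\right) \vert \leq c\left\{ \vert f\left( b-h\right) \vert +\sigma \vert h\vert _{\mathcal{H}}\right\} $ with a \emph{deterministic} constant $c$, for a.e.\ $b$ and \emph{all} $h\in \mathcal{H}$, and this cannot be verified for your choice of $f$. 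Writing $A\left( b\right) =A\left( b-h\right) +\mathrm{cross}\left( b-h,h\right) +A\left( h\right) $, the cross integrals $\int_{s}^{t}\left( x_{u}-x_{s}\right) \otimes \mathrm{d}h_{u}$ and $\int_{s}^{t}\left( h_{u}-h_{s}\right) \otimes \mathrm{d}x_{u}$ (with $x$ the path level of $b-h$) are estimated by Young's inequality as $\lesssim \vert x\vert _{p\text{-var};\left[ s,t\right] }\vert h\vert _{1\text{-var};\left[ s,t\right] }$: linear in $h$, but at the price of the path-level variation of $b-h$ --- a quantity which the area functional $f\left( b-h\right) $ does not see at all (a path can have arbitrarily large $p$-variation and identically vanishing area). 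So your claim that the perturbations are ``controlled in $\psi \circ \sqrt{\cdot }$-variation linearly in $\Vert h\Vert _{H}$'' is false as stated; this is exactly the cancellation issue of (\ref{AreaNotLinear}) resurfacing. The paper resolves it by running the generalized Fernique argument on the \emph{full} homogeneous rough-path norm $f=\Vert \cdot \Vert _{\psi _{p,2}\text{-var}}\circ S_{2}$ (Theorem \ref{ThGausstailsXPsvar}), whose Cameron--Martin translation estimate, Theorem \ref{ThTranslationPhiVarAndRhoVar}, is provable precisely because the full norm dominates $\vert y\vert _{p\text{-var}}$; the area statement then follows from $\vert A_{s,t}\vert ^{1/2}\lesssim \Vert \mathbf{B}_{s,t}\Vert $.

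The optimality step also fails as sketched. You want divergence of $\sum_{i}\tilde{\psi}\left( \vert A_{t_{i},t_{i+1}}\vert ^{1/2}\right) $ from scaling, independence and extreme values over \emph{dyadic} partitions, but no deterministic sequence of partitions can deliver this: on the dyadic equipartition of mesh $2^{-n}$ one has $\vert A_{t_{i},t_{i+1}}\vert \overset{d}{=}2^{-n}\vert A_{0,1}\vert $, hence $\sum_{i}\psi _{2,2}\left( \vert A_{t_{i},t_{i+1}}\vert ^{1/2}\right) \approx \frac{1}{\log n}\sum_{i}\vert A_{t_{i},t_{i+1}}\vert \rightarrow 0$ a.s.\ (the sum $\sum_{i}\vert A_{t_{i},t_{i+1}}\vert $ stays of order $\mathbb{E}\vert A_{0,1}\vert $ by the law of large numbers, while the largest of the $2^{n}$ increments contributes only $O\left( 2^{-n}n/\log n\right) $). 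Thus even for the critical $\psi _{2,2}$ the dyadic sums vanish in the limit, so a fortiori they cannot be made to diverge for $\tilde{\psi}$. The supremum over partitions is carried by $\omega $-dependent partitions, and that is the content of the paper's argument: the pointwise LIL for the area process, (\ref{CorLILAreaNew}), gives for each fixed $t$ arbitrarily small, $t$- and $\omega $-dependent scales $h$ with $\psi _{2,2}\left( \vert A_{t,t+h}\vert ^{1/2}\right) \gtrsim h$; Fubini makes this hold for a.e.\ $t$ simultaneously, and a Vitali covering argument (Theorem \ref{LowerBoundLevyAreaPsiVar}) assembles such intervals into partitions of total length near one, yielding $\lim_{\delta \downarrow 0}\sup_{\vert D\vert <\delta }\sum_{i}\psi _{2,2}\left( \vert A_{t_{i},t_{i+1}}\vert ^{1/2}\right) \geq C>0$ a.s. Divergence for $\tilde{\psi}$ is then obtained not by redoing any extreme-value analysis but by the elementary ratio trick $\sum_{i}\psi _{2,2}\left( a_{i}\right) \leq \left( \sup_{i}\psi _{2,2}\left( a_{i}\right) /\tilde{\psi}\left( a_{i}\right) \right) \sum_{i}\tilde{\psi}\left( a_{i}\right) $ with $a_{i}=\vert A_{t_{i},t_{i+1}}\vert ^{1/2}$: the prefactor tends to $0$ as $\vert D\vert \downarrow 0$ by uniform continuity of the area, so the left side staying bounded below forces $\sum_{i}\tilde{\psi}\left( a_{i}\right) \rightarrow \infty $. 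Without this fixed-point, adapted-scale structure your Borel--Cantelli computation does not produce the required partitions.
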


\begin{remark}
\textbf{(Warning)} We are not saying that L\'{e}vy's area, say $\left\vert
A_{0,1}\right\vert ,$ has a Gauss tail. What we are saying is that
quantities of type $\left\vert A_{0,1}\right\vert ^{1/2}$ have a Gauss tail.
The square-root arises naturally if one seeks homogenous path space norm
which deal simultanously with paths and area and are crucial for our
application in rough path theory.
\end{remark}

For example, theorem \ref{ThmLevyArea} sharpens the well-known statement 
\cite{lyons-qian-02} that for $p>2$ (which corresponds to $\psi \left(
x\right) =x^{p}$ above) 
\begin{equation*}
\left\vert A\right\vert _{p\circ \sqrt{\cdot }\text{-var;}\left[ 0,1\right]
}=\left( \sup_{D\subset \left[ 0,1\right] }\sum_{i:t_{i}\in D}\left\vert
A_{t_{i},t_{i+1}}\right\vert ^{p/2}\right) ^{1/p}<\infty \text{ a.s.}
\end{equation*}%
(This variational regularity of L\'{e}vy area increments is precisely what
allows to use rough path analysis in conjunction with Brownian motion.)

As further application (and with regard to item (iii)\ above) rough path
estimates are compatible with $\psi $-variation. For instance, Brownian
motion and L\'{e}vy area can be enhanced with any number of higher iterated
Stratonovich integrals and the resulting process has finite $\psi $%
-variation with Gaussian integrability of the associated homogenous norms,
discussed in section \ref{SubSectionroughpathinpsivariation}. Similar
arguments apply to sample path regularity of solution to stochastic
differential equations in the rough path sense. When these are
semimartingales (as is usually the case in Stratonovich theory) a.s.\ finite 
$\psi $-variation is seen just as for $t\mapsto A_{0,t}$ above. On the other
hand, our results imply that such regularity results are "robust" beyond
semimartingales and apply to large classes of differential equations driven
by Gaussian signals (including but far from restricted to fractional
Brownian motion).

\section{\protect\bigskip A Generalized Fernique Theorem}

Let $\left( \mathbb{B},\left\vert \cdot \right\vert \right) $ be a real,
separable Banach space equipped with its Borel $\sigma $-algebra $\mathfrak{B%
}$ and a centered Gaussian measure $\mu $. A famous result by X. Fernique
states that $\left\vert \cdot \right\vert _{\ast }\mu $ has a Gauss tail;
more precisely,%
\begin{equation*}
\int \exp \left( \eta \left\vert x\right\vert ^{2}\right) \mathrm{d}\mu
\left( x\right) <\infty \text{ if }\eta <\frac{1}{2\sigma ^{2}},
\end{equation*}%
where 
\begin{equation}
\sigma :=\sup_{\xi \in B^{\ast },\left\vert \xi \right\vert _{\mathcal{B}%
^{\ast }}=1}\left( \,\int \left\langle \xi ,x\right\rangle ^{2}\mathrm{d}\mu
\left( x\right) \right) ^{1/2}<\infty ,  \label{DefSigmaAbstractWienerSpace}
\end{equation}%
and this condition on $\eta $ is sharp. See \cite[Thm 4.1]{ledoux-1996} for
instance. We recall the notion of a \textit{reproducing kernel Hilbert space}
$\mathcal{H}$, continuously embedded in $\mathbb{B}$,%
\begin{equation*}
\left\vert h\right\vert \leq \sigma \left\vert h\right\vert _{\mathcal{H}%
}\forall h\in \mathcal{H}\text{,}
\end{equation*}%
so that $\left( \mathbb{B},\mathcal{H},\mu \right) $ is an abstract Wiener
space in the sense of L. Gross. (The standard example to have in mind is the
Wiener-space $C_{0}\left( \left[ 0,1\right] ,\mathbb{R}\right) $ equipped
with Wiener measure; then $\mathcal{H}$ is the space of all absolutely
continuous paths with $h\left( 0\right) =0$ and $\dot{h}\in L^{2}\left( %
\left[ 0,1\right] \right) $.) We can then cite Borell's inequality, e.g.\ 
\cite[Theorem 4.3]{ledoux-1996}.

\begin{theorem}
\label{ThBorellTIS}Let $\left( \mathbb{B},\mathcal{H},\mu \right) $ be an
abstract Wiener space and $A\subset E$ a measurable Borel set with $\mu
\left( A\right) >0$. Take $a\in (-\infty ,\infty ]$ such that%
\begin{equation*}
\mu \left( A\right) =\int_{-\infty }^{a}\frac{1}{\sqrt{2\pi }}e^{-x^{2}/2}%
\mathrm{d}x=:\Phi \left( a\right) .
\end{equation*}%
Then, if $\mathcal{K}$ denotes the unit ball in $\mathcal{H}$ and $\mu
_{\ast }$ stands for the inner measure\footnote{%
Measurability of the so-called Minkowski sum $A+r\mathcal{K}$ is a delicate
topic. Use of the inner measure bypasses this issue and is not restrictive
in applications.} associated to $\mu $,%
\begin{equation}
\mu _{\ast }\left( A+r\mathcal{K}\right) =\mu _{\ast }\left\{ x+rh:x\in
A,\,h\in \mathcal{K}\right\} \geq \Phi \left( a+r\right) .
\label{EqBorellInequality}
\end{equation}
\end{theorem}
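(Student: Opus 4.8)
The plan is to recognize this as the Gaussian isoperimetric inequality and to reduce it to a finite-dimensional statement, where the one genuinely deep ingredient enters, and then to lift the result back to the Wiener space. First I would reduce to finite dimensions. Since $\mu$ is a centered Gaussian measure on the separable Banach space $\mathbb{B}$, it is determined by its finite-dimensional cylindrical marginals, and the Cameron--Martin ball $\mathcal{K}$ is exhausted from within by the images of Euclidean unit balls under the maps $\mathbb{R}^n \to \mathcal{H} \hookrightarrow \mathbb{B}$ built from an orthonormal basis $(h_i)$ of $\mathcal{H}$. Because the conclusion is phrased with the \emph{inner} measure $\mu_*$, I may approximate $A$ by cylindrical sets and replace $A + r\mathcal{K}$ by the increasing union of the sets $A + r\mathcal{K}_n$, where $\mathcal{K}_n$ is the unit ball of $\mathrm{span}(h_1,\dots,h_n)$; monotonicity of $\mu_*$ along this exhaustion reduces the claim to the standard Gaussian measure $\gamma_n$ on $\mathbb{R}^n$ with $\mathcal{K}$ replaced by the Euclidean unit ball $B$.

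Next, in $\mathbb{R}^n$ the statement reads: if $\gamma_n(A) = \Phi(a)$ then $\gamma_n(A + rB) \geq \Phi(a+r)$. This is sharp and attained by half-spaces, since for $H = \{x : \langle x, e\rangle \leq a\}$ with $|e| = 1$ one computes $\gamma_n(H) = \Phi(a)$ and $\gamma_n(H + rB) = \Phi(a+r)$ directly from the one-dimensional marginal. Thus the entire content is the assertion that among all sets of prescribed Gaussian measure, half-spaces minimize the measure of the $r$-neighborhood.

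To establish that finite-dimensional inequality I would use Poincar\'{e}'s limit together with L\'{e}vy's spherical isoperimetric inequality (the route of Borell and of Sudakov--Tsirelson). The key facts are: (i) the pushforward of the uniform measure on the sphere $S^{N-1}(\sqrt{N}) \subset \mathbb{R}^N$ under projection onto the first $n$ coordinates converges to $\gamma_n$ as $N \to \infty$; (ii) on each sphere, geodesic caps minimize the measure of their $\rho$-enlargement, which is proved by two-point symmetrization; and (iii) under the projection and scaling, spherical caps converge to half-spaces and the spherical enlargement to the Euclidean one, so passing to the limit transports the spherical inequality to the Gaussian one. Alternatively, Bobkov's two-point inequality, tensorized over $\{0,1\}^N$ and pushed through the central limit theorem, yields directly the dimension-free functional form $I\!\left(\int f\,\mathrm{d}\gamma\right) \leq \int \sqrt{I(f)^2 + |\nabla f|^2}\,\mathrm{d}\gamma$ with $I = \varphi \circ \Phi^{-1}$, from which the isoperimetric inequality follows by approximating indicators; its dimension-freeness makes the lifting step cleaner.

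The main obstacle is genuinely the isoperimetric inequality itself: whichever route one takes, it rests on a nontrivial symmetrization (on the sphere, or the two-point rearrangement) rather than on soft analysis. The infinite-dimensional passage, by contrast, is a technical approximation whose only real subtlety is the measurability of the Minkowski sum $A + r\mathcal{K}$; this is exactly what forces the inner measure $\mu_*$ into the statement, and once one works with $\mu_*$ the monotone exhaustion of the first step goes through without measurability difficulties.
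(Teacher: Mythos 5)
First, a point of reference: the paper does not prove Theorem \ref{ThBorellTIS} at all — it quotes it as Borell's inequality with a pointer to Ledoux's notes. So your proposal can only be compared with the classical Borell/Sudakov--Tsirelson argument that the citation stands for. Your finite-dimensional core reproduces that argument correctly: half-spaces are extremal for $\gamma_{n}$, and the inequality $\gamma _{n}\left( A+rB\right) \geq \Phi \left( \Phi ^{-1}\left( \gamma _{n}\left( A\right) \right) +r\right) $ follows from L\'{e}vy's spherical isoperimetric inequality via Poincar\'{e}'s limit (or from Bobkov's two-point/CLT route). That is indeed where the depth lies, and that part of your sketch is sound.

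The genuine gap is in the infinite-dimensional lifting, which you dismiss as soft approximation. Two problems. First, a nonempty cylinder set $\pi ^{-1}\left( \tilde{A}\right) $, $\pi =\left( \xi _{1},\dots ,\xi _{m}\right) :\mathbb{B}\rightarrow \mathbb{R}^{m}$, contains the full affine subspace $x+\bigcap_{i\leq m}\ker \xi _{i}$ through each of its points, hence is unbounded; so a general Borel set (a ball, say) contains \emph{no} nonempty cylinder set, and "approximating $A$ by cylindrical sets" cannot produce the inner approximations that a lower bound on $\mu _{\ast }\left( A+r\mathcal{K}\right) $ requires — outer approximations give inclusions in the useless direction. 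Second, and decisively, the statement you reduce to is false for fixed $n$: it is not true that $\mu _{\ast }\left( A+r\mathcal{K}_{n}\right) \geq \Phi \left( a+r\right) $ for general $A$, where $\mathcal{K}_{n}$ is the unit ball of $\mathrm{span}\left( h_{1},\dots ,h_{n}\right) $. Take $A=\left\{ x:\xi \left( x\right) \leq a\right\} $ with $\xi \in \mathbb{B}^{\ast }$ of unit variance whose Cameron--Martin element is orthogonal to $h_{1},\dots ,h_{n}$; then $\xi $ vanishes on $\mathcal{K}_{n}$, so $A+r\mathcal{K}_{n}=A$ and $\mu \left( A+r\mathcal{K}_{n}\right) =\Phi \left( a\right) <\Phi \left( a+r\right) $. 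Hence monotonicity of $\mu _{\ast }$ along $\mathcal{K}_{n}\uparrow $ plus the finite-dimensional theorem cannot close the argument: the limit $n\rightarrow \infty $ must do real probabilistic work. The standard repair (Borell; Ledoux--Talagrand) is: reduce to $A$ closed by inner regularity (then $A+r\mathcal{K}_{n}$ is closed, hence measurable); split $X=\pi _{n}X+\left( X-\pi _{n}X\right) $ into independent parts, apply the $\mathbb{R}^{n}$-inequality on each fiber of $A$ over $v=X-\pi _{n}X$ to get $\mu \left( A+r\mathcal{K}_{n}\right) \geq \int \Phi \left( \Phi ^{-1}\left( u_{n}\left( v\right) \right) +r\right) \mathrm{d}\nu _{n}\left( v\right) $ with $u_{n}\left( v\right) $ the conditional measure of $A$; and then — this is the missing idea — observe that $u_{n}\left( X-\pi _{n}X\right) =\mathbb{E}\left[ 1_{A}\mid \sigma \left( \xi _{i}\left( X\right) :i>n\right) \right] $ is a reverse martingale along decreasing $\sigma $-fields whose intersection is trivial by Kolmogorov's zero-one law, so $u_{n}\rightarrow \mu \left( A\right) $ a.s., which drives the fiber bounds up to $\Phi \left( a+r\right) $ in the limit. (Jensen cannot substitute for this step: $u\mapsto \Phi \left( \Phi ^{-1}\left( u\right) +r\right) $ is concave, i.e.\ the inequality goes the wrong way, as the counterexample confirms.) So the subtlety of the lifting is not merely measurability of the Minkowski sum; without the conditioning and zero-one-law argument the passage from $\mathbb{R}^{n}$ to $\left( \mathbb{B},\mathcal{H},\mu \right) $ does not go through.
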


The reader should observe that the following theorem reduces to the usual
Fernique result when applied to the Banach norm on $\mathbb{B}$.

\begin{theorem}[Generalized Fernique Estimate]
\label{ThGeneralizedFernique}Let $\left( \mathbb{B},\mathcal{H},\mu \right) $
be an abstract Wiener space. Assume $f:\mathbb{B}\rightarrow \mathbb{R\cup }%
\left\{ -\infty ,\infty \right\} $ is a measurable map and $N\subset \mathbb{%
B}$ a null-set such that for all $b\notin N$ 
\begin{equation}
\left\vert f\left( b\right) \right\vert <\infty  \label{EqfFinite}
\end{equation}%
and for some positive constant $c$, 
\begin{equation}
\forall h\in \mathcal{H}\text{: }\left\vert f\left( b\right) \right\vert
\leq c\left\{ \left\vert \left( f\left( b-h\right) \right) \right\vert
+\sigma \left\vert h\right\vert _{\mathcal{H}}\right\} .
\label{Control_H_translate}
\end{equation}%
Then, with the definition of $\sigma $ given in (\ref%
{DefSigmaAbstractWienerSpace}),%
\begin{equation*}
\int \exp \left( \eta \left\vert f\left( b\right) \right\vert ^{2}\right) 
\mathrm{d}\mu \left( b\right) <\infty \text{ \ if }\eta <\frac{1}{%
2c^{2}\sigma ^{2}}.
\end{equation*}
\end{theorem}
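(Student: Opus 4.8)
The plan is to mimic the classical Borell-based proof of Fernique's theorem, replacing the Banach norm by the functional $\left\vert f\right\vert$ and invoking hypothesis (\ref{Control_H_translate}) exactly where the usual proof would use the triangle inequality $\left\vert x\right\vert \leq \left\vert x-h\right\vert +\left\vert h\right\vert$. The whole argument reduces to producing a Gaussian tail estimate of the shape $\mu\{\left\vert f\right\vert >R\}\lesssim \exp(-R^{2}/(2c^{2}\sigma ^{2}))$ and then integrating it against $\exp (\eta \left\vert f\right\vert ^{2})$.

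First I would fix a threshold $r_{0}$ large enough that $A:=\{b:\left\vert f(b)\right\vert \leq r_{0}\}$ has $\mu (A)>1/2$; this is possible because (\ref{EqfFinite}) forces $\mu \{\left\vert f\right\vert <\infty \}=1$, so $\mu (A)\rightarrow 1$ as $r_{0}\rightarrow \infty$. Writing $\mu (A)=\Phi (a)$ as in Theorem \ref{ThBorellTIS} then gives $a>0$. The key deterministic observation is a containment: suppose $b\notin N$ and $b\in A+r\mathcal{K}$, say $b=x+rh$ with $x\in A$ and $h\in \mathcal{K}$ (so $\left\vert h\right\vert _{\mathcal{H}}\leq 1$). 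Applying (\ref{Control_H_translate}) at the admissible point $b\notin N$ with the translate $rh\in \mathcal{H}$ yields
\[
\left\vert f(b)\right\vert \leq c\{\left\vert f(b-rh)\right\vert +\sigma \left\vert rh\right\vert _{\mathcal{H}}\}=c\{\left\vert f(x)\right\vert +r\sigma \left\vert h\right\vert _{\mathcal{H}}\}\leq c(r_{0}+r\sigma ).
\]
Contrapositively, every $b\notin N$ with $\left\vert f(b)\right\vert >c(r_{0}+r\sigma )$ lies in the complement of $A+r\mathcal{K}$.

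Next I would feed this into Borell's inequality. Since $N$ is null and $\{\left\vert f\right\vert >R\}$ is measurable, the containment above gives, with $R=c(r_{0}+r\sigma )$,
\[
\mu \{\left\vert f\right\vert >R\}\leq \mu ^{\ast }\big((A+r\mathcal{K})^{c}\big)=1-\mu _{\ast }(A+r\mathcal{K})\leq 1-\Phi (a+r)=\Phi (-a-r),
\]
where $\mu ^{\ast }$ is the outer measure and the middle inequality is (\ref{EqBorellInequality}). Using the elementary bound $\Phi (-t)\leq e^{-t^{2}/2}$ for $t\geq 0$ and solving $R=c(r_{0}+r\sigma )$ for $r=(R-cr_{0})/(c\sigma )$, this reads $\mu \{\left\vert f\right\vert >R\}\leq \exp (-(a+r)^{2}/2)$ with $(a+r)^{2}/2\sim R^{2}/(2c^{2}\sigma ^{2})$ as $R\rightarrow \infty$.

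Finally I would convert the tail bound into exponential integrability via the layer-cake identity
\[
\int \exp (\eta \left\vert f(b)\right\vert ^{2})\,\mathrm{d}\mu (b)=1+\int_{0}^{\infty }2\eta R\,e^{\eta R^{2}}\,\mu \{\left\vert f\right\vert >R\}\,\mathrm{d}R,
\]
and observe that the integrand is dominated by a constant times $R\exp \big(\eta R^{2}-(R-cr_{0})^{2}/(2c^{2}\sigma ^{2})\big)$; the quadratic exponent tends to $-\infty$ precisely when $\eta <1/(2c^{2}\sigma ^{2})$, which yields convergence. The main obstacle here is not analytic but measure-theoretic: because the Minkowski sum $A+r\mathcal{K}$ need not be Borel, one must phrase Borell's bound with the inner measure $\mu _{\ast }$ and argue only that the genuinely measurable set $\{b\notin N:\left\vert f\right\vert >R\}$ sits inside $(A+r\mathcal{K})^{c}$, so that its measure is controlled by the outer measure of that complement. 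One also has to be careful to invoke the hypothesis solely at points $b\notin N$, where both (\ref{EqfFinite}) and (\ref{Control_H_translate}) are guaranteed, and to check that the specialization to the Banach norm $f=\left\vert \cdot \right\vert$ (with $c=1$) recovers the sharp constant in the usual Fernique theorem, as asserted in the remark preceding the statement.
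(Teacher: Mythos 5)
Your proof is correct and follows essentially the same route as the paper's: hypothesis (\ref{Control_H_translate}) converts membership of a point $b\notin N$ in the Minkowski sum $A+r\mathcal{K}$ into a bound on $\left\vert f\left( b\right) \right\vert$, Borell's inequality (Theorem \ref{ThBorellTIS}) phrased with the inner measure then yields a Gaussian tail for $\left\vert f\right\vert$, and integrating the tail gives the claim. The only difference is bookkeeping: you fix the reference set $A=\left\{ \left\vert f\right\vert \leq r_{0}\right\} $ once, so the sharp constant $1/\left( 2c^{2}\sigma ^{2}\right) $ emerges because the additive offset $cr_{0}$ washes out as $R\rightarrow \infty $, whereas the paper scales the threshold as $\varepsilon \sigma r$ (so the loss appears as a multiplicative factor $1+\varepsilon $) and recovers sharpness by letting $\varepsilon \rightarrow 0$ at the end; both devices are valid.
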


\begin{proof}
We have for all $b\notin N$\ and all $h\in r\mathcal{K}$, where $\mathcal{K}$
denotes the unit ball of $\mathcal{H}$ and $r>0,$%
\begin{eqnarray*}
\left\{ b:\left\vert f\left( b\right) \right\vert \leq M\right\} &\supset
&\left\{ b:c\left( \left\vert f\left( b-h\right) \right\vert +\sigma
\left\vert h\right\vert _{\mathcal{H}}\right) \leq M\right\} \\
&\supset &\left\{ b:c\left( \left\vert f\left( b-h\right) \right\vert
+\sigma r\right) \leq M\right\} \\
&=&\left\{ b+h:\left\vert f\left( b\right) \right\vert \leq M/c-\sigma
r\right\} \text{.}
\end{eqnarray*}%
Since $h\in r\mathcal{K}$ was arbitrary,%
\begin{eqnarray*}
\left\{ b:\left\vert f\left( b\right) \right\vert \leq M\right\} &\supset
&\cup _{h\in r\mathcal{K}}\left\{ b+h:\left\vert f\left( b\right)
\right\vert \leq M/c-\sigma r\right\} \\
&=&\left\{ b:\left\vert f\left( b\right) \right\vert \leq M/c-\sigma
r\right\} +r\mathcal{K}
\end{eqnarray*}%
and we see that%
\begin{eqnarray*}
\mu \left[ \left\vert f\left( b\right) \right\vert \leq M\right] &=&\mu
_{\ast }\left[ \left\vert f\left( b\right) \right\vert \leq M\right] \\
&\geq &\mu _{\ast }\left( \left\{ b:\left\vert f\left( b\right) \right\vert
\leq M/c-\sigma r\right\} +r\mathcal{K}\right)
\end{eqnarray*}%
We can take $M=\left( 1+\varepsilon \right) c\sigma r$ and obtain%
\begin{equation*}
\mu \left[ \left\vert f\left( b\right) \right\vert \leq \left( 1+\varepsilon
\right) c\sigma r\right] \geq \mu _{\ast }\left( \left\{ b:\left\vert
f\left( b\right) \right\vert \leq \varepsilon \sigma r\right\} +r\mathcal{K}%
\right)
\end{equation*}%
Keeping $\varepsilon $ fixed, take $r\geq r_{0}$ where $r_{0}$ is chosen
large enough such that 
\begin{equation*}
\mathbb{\mu }\left[ \left\{ b:\left\vert f\left( b\right) \right\vert \leq
\varepsilon \sigma r_{0}\right\} \right] >0.
\end{equation*}%
Letting $\Phi $ denote the distribution function of a standard Gaussian, it
follows from Borell's inequality\ that%
\begin{equation*}
\mu \left[ \left\vert f\left( b\right) \right\vert \leq \left( 1+\varepsilon
\right) c\sigma r\right] \geq \Phi \left( a+r\right)
\end{equation*}%
for some $a>-\infty $. Equivalently, 
\begin{equation*}
\mu \left[ \left\vert f\left( b\right) \right\vert \geq x\right] \leq \bar{%
\Phi}\left( a+\frac{x}{\left( 1+\varepsilon \right) c\sigma }\right)
\end{equation*}%
with $\bar{\Phi}\equiv 1-\Phi $ and using $\bar{\Phi}\left( z\right)
\lesssim \exp \left( -z^{2}/2\right) $ this we see that this implies 
\begin{equation*}
\int \exp \left( \eta \left\vert f\left( b\right) \right\vert ^{2}\right) 
\mathrm{d}\mu \left( b\right) <\infty
\end{equation*}%
provided%
\begin{equation*}
\eta <\frac{1}{2}\left( \frac{1}{\left( 1+\varepsilon \right) c\sigma }%
\right) ^{2}.
\end{equation*}%
Sending $\varepsilon \rightarrow 0$ finishes the proof.
\end{proof}

\section{Regularity of stochastic processes\label{Section Regularity}}

Sharp sample path properties for stochastic processes often require
generalized H\"{o}lder- or variation norms. Using the following definition, L%
\'{e}vy's modulus for Brownian motion is captured by $\varphi _{2,1}$-H\"{o}%
lder regularity, Taylor's variation regularity corresponds to generalized $%
\psi _{2,2}$-variation.\ (Granted continuity and strictly monotonicity of $%
\varphi $ and $\psi $, only the behaviour near zero matters.)

\begin{definition}
\label{DefPsiAndPhi}Given $x>0$ we define\footnote{%
All $\psi ^{\prime }$s and $\varphi ^{\prime }$s below extend continuously
to $0$ with $\psi \left( 0\right) =0,\,\varphi \left( 0\right) =0$.} 
\begin{eqnarray*}
\varphi _{p,1}\left( x\right) &=&x^{1/p}\sqrt{\log _{1}x}\text{ and }\psi
_{p,1}\left( x\right) =\left\vert \frac{x}{\sqrt{\log _{1}x}}\right\vert ^{p}%
\text{,} \\
\text{where }\log _{1}\left( x\right) &=&\left\{ 
\begin{array}{cc}
\log \frac{1}{x} & \text{, for }x\leq e^{-1} \\ 
1 & \text{, otherwise}%
\end{array}%
\right.
\end{eqnarray*}%
and%
\begin{eqnarray*}
\varphi _{p,2}\left( x\right) &=&x^{1/p}\sqrt{\log _{2}x}\text{ and }\psi
_{p,2}\left( x\right) =\left\vert \frac{x}{\sqrt{\log _{2}x}}\right\vert ^{p}%
\text{,} \\
\text{where }\log _{2}\left( x\right) &=&\left\{ 
\begin{array}{cc}
\log \log \frac{1}{x} & \text{, for }x\leq e^{-e} \\ 
1 & \text{, otherwise}%
\end{array}%
\right.
\end{eqnarray*}
\end{definition}

\begin{remark}
Note that $\varphi _{p,2}\left( \psi _{p,2}\left( s\right) \right) \sim $ $%
\psi _{p,2}\left( \varphi _{p,2}\left( s\right) \right) \sim s$ as $%
s\rightarrow 0$.
\end{remark}

We shall see that (sharp) generalized H\"{o}lder- or variation regularity of
a stochastic process can be shown from the following simple condition.\ It
is not only satisfied by a generic class of Gaussian processes and Gaussian
rough paths (discussed in sections \ref{SS_Gauss}, \ref{SS_GaussRoughPath}
below) but also by Markov processes with uniform (sub)elliptic generator in
divergence form \cite{St88, SaSt91}.

\begin{condition}
\label{ConditionExp}$X$ is a process on $\left[ 0,1\right] $ taking values
in a metric space $\left( E,d\right) $ and there exists a $\eta >0$ s.t.%
\begin{equation}
\sup_{0\leq s<t\leq 1}\mathbb{E}\left( \exp \left( \eta \left[ \frac{d\left(
X_{s},X_{t}\right) }{\left\vert t-s\right\vert ^{1/p}}\right] ^{2}\right)
\right) <\infty ,  \label{EqConditionforGRR}
\end{equation}%
Clearly, this condition guarantees the existence of a continuous version of $%
X$ with which we always work.
\end{condition}

\begin{lemma}
Condition (\ref{EqConditionforGRR}) is equivalent to%
\begin{equation*}
\sup_{0\leq s<t\leq 1}\left\vert \frac{d\left( X_{s},X_{t}\right) }{%
\left\vert t-s\right\vert ^{1/p}}\right\vert _{L^{2q}\left( \mathbb{P}%
\right) }=O\left( \sqrt{q}\right) \text{ as }q\rightarrow \infty .
\end{equation*}
\end{lemma}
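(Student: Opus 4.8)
The plan is to treat the normalised increment $Y_{s,t}:=d\left( X_{s},X_{t}\right) /\left\vert t-s\right\vert ^{1/p}\geq 0$ as a single nonnegative random variable and to recognise the asserted equivalence as the standard dictionary between a uniform sub-Gaussian exponential moment of $Y_{s,t}^{2}$ and the growth rate $\left\vert Y_{s,t}\right\vert _{L^{2q}}=O\left( \sqrt{q}\right) $. I would establish the two implications separately, in each case keeping track that all constants are uniform in the pair $\left( s,t\right) $, since that uniformity is precisely what the suprema in both formulations encode.

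For the forward direction, assume $\sup_{s<t}\mathbb{E}\left[ \exp \left( \eta Y_{s,t}^{2}\right) \right] =:C<\infty $. The elementary bound $\left( \eta y\right) ^{q}/q!\leq e^{\eta y}$, applied with $y=Y_{s,t}^{2}$, yields after taking expectations $\mathbb{E}\left[ Y_{s,t}^{2q}\right] \leq q!\,C/\eta ^{q}$ for every $q$ and every $\left( s,t\right) $. Taking $2q$-th roots and invoking Stirling's formula $\left( q!\right) ^{1/(2q)}\sim \sqrt{q/e}$, together with $C^{1/(2q)}\rightarrow 1$ and $\eta ^{-1/2}$, gives $\left\vert Y_{s,t}\right\vert _{L^{2q}}\lesssim \sqrt{q}$ with implied constant depending only on $\eta $, uniformly in $\left( s,t\right) $.

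For the converse, assume there are $K$ and $q_{0}$ with $\left\vert Y_{s,t}\right\vert _{L^{2q}}\leq K\sqrt{q}$ for all $q\geq q_{0}$, uniformly in $\left( s,t\right) $. Expanding the exponential as a power series and inserting $\mathbb{E}\left[ Y_{s,t}^{2q}\right] =\left\vert Y_{s,t}\right\vert _{L^{2q}}^{2q}\leq \left( K\sqrt{q}\right) ^{2q}=K^{2q}q^{q}$ gives
\[
\mathbb{E}\left[ \exp \left( \eta Y_{s,t}^{2}\right) \right] =\sum_{q\geq 0}\frac{\eta ^{q}\mathbb{E}\left[ Y_{s,t}^{2q}\right] }{q!}\leq \sum_{q\geq 0}\frac{\left( \eta K^{2}\right) ^{q}q^{q}}{q!}.
\]
Using $q!\geq \left( q/e\right) ^{q}$, the general term is at most $\left( \eta K^{2}e\right) ^{q}$, so the series converges as soon as $\eta <1/\left( eK^{2}\right) $, with a bound independent of $\left( s,t\right) $. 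The finitely many terms with $q<q_{0}$ cause no trouble: by monotonicity of $L^{p}$-norms on the probability space, $\left\vert Y_{s,t}\right\vert _{L^{2q}}\leq \left\vert Y_{s,t}\right\vert _{L^{2q_{0}}}\leq K\sqrt{q_{0}}$ there, so they too are uniformly bounded.

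The only point demanding care — and the reason the statement is phrased with suprema rather than for a fixed pair — is this uniformity in $\left( s,t\right) $: one must read the $O\left( \sqrt{q}\right) $ as holding with a single constant $K$ valid for all increments simultaneously, and conversely deduce a single finite exponential-moment bound. Once that is pinned down, both directions reduce to routine applications of Stirling's estimate and the power-series expansion of the exponential.
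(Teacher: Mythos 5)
Your proof is correct, and in fact the paper offers no argument at all for this lemma (its proof reads ``Left to the reader''), so there is nothing to diverge from: your two implications --- bounding $\eta^{q}Y_{s,t}^{2q}/q!\leq e^{\eta Y_{s,t}^{2}}$ and taking $2q$-th roots via Stirling for one direction, and the power-series expansion of the exponential with $q!\geq (q/e)^{q}$ for the other --- constitute exactly the standard dictionary between uniform sub-Gaussian exponential moments and $O(\sqrt{q})$ growth of $L^{2q}$-norms that the authors evidently had in mind. Your handling of the uniformity in $(s,t)$ and of the finitely many terms $q<q_{0}$ (via monotonicity of $L^{r}$-norms on a probability space) is also correct, so the proposal supplies precisely the details the paper omits.
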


\begin{proof}
Left to the reader.
\end{proof}

\subsection{L\'{e}vy's Modulus}

Given $x:\left[ 0,1\right] \rightarrow \left( E,d\right) $ and $\varphi
:[0,\infty )\rightarrow \lbrack 0,\infty )$, strictly increasing, $\varphi
\left( x\right) =0$ iff $x=0$, we set%
\begin{equation*}
\left\vert x\right\vert _{\varphi \text{-H\"{o}l;}\left[ 0,1\right]
}:=\sup_{0\leq s<t\leq 1}\frac{d\left( x_{s},x_{t}\right) }{\varphi \left(
t-s\right) }.
\end{equation*}

\begin{theorem}
\label{ThLevyModulusAS}Let $X$ satisify Condition \ref{ConditionExp}. Assume 
$\varphi \sim \varphi _{p,1}$ near $0+$.Then the random variable 
\begin{equation*}
\left\vert X\left( \omega \right) \right\vert _{\varphi _{p,1}\text{-H\"{o}l;%
}\left[ 0,1\right] }
\end{equation*}%
has a Gauss-tail.
\end{theorem}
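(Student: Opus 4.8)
The plan is to deduce everything from the Garsia--Rodemich--Rumsey (GRR) inequality, exactly as advertised in the introduction, feeding it the exponential moment bound of Condition \ref{ConditionExp}. Concretely, I would apply GRR with the Young function $\Psi(x)=e^{\eta x^{2}}-1$ (convex, increasing, $\Psi(0)=0$, with inverse $\Psi^{-1}(y)=\sqrt{\eta^{-1}\log(1+y)}$) and the auxiliary increasing function $\rho(u)=u^{1/p}$, so that $\mathrm{d}\rho(u)=\tfrac1p u^{1/p-1}\mathrm{d}u$. Set
\[
B(\omega)=\int_{0}^{1}\int_{0}^{1}\Psi\!\left(\frac{d(X_{s},X_{t})}{|t-s|^{1/p}}\right)\mathrm{d}s\,\mathrm{d}t .
\]
By Tonelli and Condition \ref{ConditionExp}, $\mathbb{E}[B]\le\sup_{s<t}\mathbb{E}\,e^{\eta(d(X_{s},X_{t})/|t-s|^{1/p})^{2}}-1<\infty$; in particular $B<\infty$ a.s., so (working with the continuous version) GRR applies pathwise and yields, for every $s<t$ and $h=t-s$,
\[
d(X_{s},X_{t})\le 8\int_{0}^{h}\Psi^{-1}\!\left(\frac{4B}{u^{2}}\right)\mathrm{d}\rho(u)=\frac{8}{p\sqrt{\eta}}\int_{0}^{h}\sqrt{\log\!\Big(1+\tfrac{4B}{u^{2}}\Big)}\,u^{1/p-1}\,\mathrm{d}u .
\]

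The core of the argument is to extract from this integral a bound of the shape $C_{1}+C_{2}\sqrt{\log(1+B)}$ after dividing by $\varphi_{p,1}$. I would use $1+4B/u^{2}\le(1+4B)(2/u^{2})$ for $u\le1$ together with $\sqrt{a+b}\le\sqrt a+\sqrt b$ to split $\sqrt{\log(1+4B/u^{2})}\le\sqrt{\log(1+4B)}+\sqrt{\log2}+\sqrt2\,\sqrt{\log(1/u)}$. Integrating each term against $u^{1/p-1}$ (the substitution $u=hv$ turns $\int_{0}^{h}\sqrt{\log(1/u)}\,u^{1/p-1}\mathrm{d}u$ into $h^{1/p}$ times $p\sqrt{\log(1/h)}$ plus the finite constant $c_{p}=\int_{0}^{1}\sqrt{\log(1/v)}\,v^{1/p-1}\mathrm{d}v$) gives
\[
d(X_{s},X_{t})\le\frac{8}{\sqrt\eta}\,h^{1/p}\Big[\sqrt{\log(1+4B)}+\sqrt2\,\sqrt{\log(1/h)}+c\Big],
\]
with $c$ depending only on $p$.

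Dividing by $\varphi_{p,1}(h)$ I would split into $h\le e^{-1}$ and $h>e^{-1}$. In the first regime $\varphi_{p,1}(h)=h^{1/p}\sqrt{\log(1/h)}$ with $\log(1/h)\ge1$, so dividing through and using $\log(1/h)\ge1$ absorbs the $\sqrt{\log(1/h)}$ term into a constant and leaves $\sqrt{\log(1+4B)}$ untouched; in the second regime $\varphi_{p,1}(h)=h^{1/p}$ (since $\log_{1}h=1$ there) and now $\sqrt{\log(1/h)}<1$ is itself a constant. Either way one obtains, uniformly in $s<t$,
\[
\frac{d(X_{s},X_{t})}{\varphi_{p,1}(h)}\le C_{1}+C_{2}\sqrt{\log(1+4B)},\qquad C_{2}=\tfrac{8}{\sqrt\eta},
\]
hence $\left\vert X\right\vert_{\varphi_{p,1}\text{-H\"{o}l};[0,1]}\le C_{1}+C_{2}\sqrt{\log(1+4B)}$. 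Finally, for $u>C_{1}$, Markov's inequality applied to $B$ gives
\[
\mathbb{P}\big(\left\vert X\right\vert_{\varphi_{p,1}\text{-H\"{o}l}}>u\big)\le\mathbb{P}\big(1+4B>e^{((u-C_{1})/C_{2})^{2}}\big)\le(1+4\mathbb{E}[B])\,e^{-((u-C_{1})/C_{2})^{2}},
\]
which is the asserted Gauss tail. The main obstacle is precisely the bookkeeping in the middle step: one must arrange that the random quantity $B$ enters the modulus bound only through $\sqrt{\log(1+B)}$ (this is what the choice $\Psi(x)=e^{\eta x^{2}}-1$ buys us) and that the deterministic logarithmic factor produced by the GRR integral matches, rather than overwhelms, the $\sqrt{\log_{1}}$ built into $\varphi_{p,1}$ — it is this matching, case by case in $h$, that turns the mere finiteness $\mathbb{E}[B]<\infty$ into a genuine Gaussian rather than merely polynomial tail.
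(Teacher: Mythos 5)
Your proof is correct, and it shares the paper's starting point --- Garsia--Rodemich--Rumsey applied with exactly the same pair $\Psi(x)=e^{\eta x^{2}}-1$, $\rho(u)=u^{1/p}$ --- but the way you pass from the GRR integral to the Gauss tail is genuinely different. The paper rescales the integral so as to write the modulus bound as $c_{1}F^{1/(2p)}\zeta\bigl((t-s)/\sqrt{F}\bigr)$ with $\zeta(x)=\int_{0}^{x}u^{1/p-1}\sqrt{\log(1+1/u^{2})}\,\mathrm{d}u$ and $F=B+1$, invokes a submultiplicativity estimate $\zeta(xy)\leq c_{2}\zeta(x)\zeta(y)$ to factor out the random quantity $M=F^{1/(2p)}\zeta(1/\sqrt{F})$, proves that $M$ has a Gauss tail by a separate Jensen-inequality computation, and --- since $\zeta\sim c\,\varphi_{p,1}$ only near $0+$ --- must split the supremum into $|t-s|\leq\delta$ and $|t-s|>\delta$, handling the second regime with the auxiliary Theorem \ref{ThExpTail} and the fact that a sum of two Gauss-tail random variables again has a Gauss tail. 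You instead linearize inside the integral, via $\log(1+4B/u^{2})\leq\log(1+4B)+\log 2+2\log(1/u)$ for $u\leq 1$, integrate term by term, and arrive at the single uniform bound $\left\vert X\right\vert _{\varphi _{p,1}\text{-H\"{o}l};\left[ 0,1\right]}\leq C_{1}+C_{2}\sqrt{\log(1+4B)}$ valid for all $0<t-s\leq 1$ (your two cases $h\leq e^{-1}$ and $h>e^{-1}$ play the role of the paper's sup-splitting); the tail then follows from one application of Markov's inequality to $B\in L^{1}(\mathbb{P})$. Your route is more elementary and self-contained --- no submultiplicativity lemma, no Jensen step, no recourse to Theorem \ref{ThExpTail} --- and it keeps the constants explicit ($C_{2}=8/\sqrt{\eta}$, so one can even read off an admissible exponential-moment exponent of order $\eta$). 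What the paper's longer detour buys is reusability within the paper itself: Theorem \ref{ThExpTail} is precisely the ingredient recycled later in the proofs of Theorem \ref{TaylorVar} and Proposition \ref{LIL}, so the machinery set up there is not wasted.
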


\begin{proof}
This is a straight-forward adaption of the case $p=2$ in \cite%
{friz-victoir-05}. We include details as we want to make the point that
there is no obvious extension of these ideas to generalized variation
"norms". The proof is based on the well-known Garsia-Rodemich-Rumsey lemma
with the pair of functions $\psi $,$q$ given by 
\begin{equation*}
\psi \left( x\right) :=e^{\eta x^{2}}-1,q\left( x\right) :=x^{1/p}.
\end{equation*}%
Setting $\zeta \left( x\right) =\int_{0}^{x}u^{1/p-1}\sqrt{\log \left(
1+1/u^{2}\right) }du$ this yields an estimate of form%
\begin{eqnarray*}
d\left( X_{s},X_{t}\right) &\leq &c_{1}\int_{0}^{t-s}u^{1/p-1}\sqrt{\log
\left( 1+4F/u^{2}\right) }\mathrm{d}u \\
&=&c_{1}F^{1/\left( 2p\right) }\zeta \left( \frac{t-s}{\sqrt{F}}\right)
\end{eqnarray*}%
where $\zeta \left( x\right) \sim _{x\rightarrow 0}c_{2}x^{1/p}\sqrt{\log 1/x%
}=c_{2}\varphi _{p,1}\left( x\right) $ for $x$ near $0+$ and $F=F\left(
\omega \right) $ is given by%
\begin{equation}
F=\diint\limits_{\left[ 0,1\right] ^{2}}\psi \left( \frac{d\left(
X_{u},X_{v}\right) }{q\left( \left\vert v-u\right\vert \right) }\right) 
\mathrm{d}u\mathrm{d}v+1  \label{EqGRRZetaNew}
\end{equation}%
By Condition \ref{ConditionExp} and Fubini, $F\in L^{1}\left( \mathbb{P}%
\right) $; adding $1$ guarantees that $F\geq 1$ which will be convenient
below. By Fubini and Conditio we immediately see that $F\in L^{1}$
(especially finite a.s.). An elementary computation reveals 
\begin{equation}
\exists c_{2}:\forall x,y\in \left[ 0,1\right] :\zeta \left( xy\right) \leq
c_{2}\zeta \left( x\right) \zeta \left( y\right) .  \label{EqZetaSubm}
\end{equation}%
Combined with (\ref{EqGRRZetaNew}) we see that 
\begin{equation}
d\left( X_{s},X_{t}\right) \leq c_{3}F^{1/\left( 2p\right) }\zeta \left( 1/%
\sqrt{F}\right) \zeta \left( t-s\right) .  \label{EqModulusWithZeta}
\end{equation}%
It remains to see that $M\left( \omega \right) =M:=F^{1/\left( 2p\right)
}\zeta \left( 1/\sqrt{F}\right) $ has a Gauss tail. After a change of
variables ($\tilde{u}=F^{1/2}u$) $M=\int_{0}^{1}\sqrt{\log \left(
1+F/u^{2}\right) }\mathrm{d}u$ and Jensen's inequality gives%
\begin{eqnarray}
\mathbb{E}\left[ \exp \left( \lambda M^{2}\right) \right] &\leq &\mathbb{E}%
\left[ \int_{0}^{1}\exp \left[ \lambda \log \left( 1+F/u^{2}\right) \right] 
\mathrm{d}u\right]  \notag \\
&\leq &\mathbb{E}\left[ \int_{0}^{1}\exp \left[ \lambda \log \left(
2F/u^{2}\right) \right] \mathrm{d}u\right]  \notag \\
&=&\left( 2F\right) ^{\lambda }\mathbb{E}\left[ \int_{0}^{1}1/u^{2\lambda }%
\mathrm{d}u\right] .  \label{EqMGaussTail}
\end{eqnarray}%
It now suffices to choose $\lambda \in \left( 0,1/2\right) $, so that the
deterministic integral is finite, and to observe that $1\leq F^{\lambda
}\leq F\in L^{1}\left( \mathbb{P}\right) $. To see that Gauss tail of $%
\varphi _{p,1}$-H\"{o}lder "norm", we split up the $\sup $. For
deterministic $\delta $, small enough, we have%
\begin{eqnarray*}
\sup_{0\leq s<t\leq 1}\frac{d\left( X_{s},X_{t}\right) }{\varphi
_{p,1}\left( \left\vert t-s\right\vert \right) } &\leq &\sup_{\substack{ %
0\leq s<t\leq 1  \\ \left\vert t-s\right\vert \leq \delta }}\frac{d\left(
X_{s},X_{t}\right) }{\zeta \left( \left\vert t-s\right\vert \right) }\frac{%
\zeta \left( t-s\right) }{\varphi _{p,1}\left( \left\vert t-s\right\vert
\right) }+\sup_{\substack{ 0\leq s<t\leq 1  \\ \left\vert t-s\right\vert
>\delta }}\frac{d\left( X_{s},X_{t}\right) }{\varphi _{p,1}\left( \left\vert
t-s\right\vert \right) } \\
&\leq &c_{4}M+c_{5}\left\vert X\right\vert _{0,\left[ 0,1\right] }.
\end{eqnarray*}%
Using Gaussian integrability of $M$ and $\left\vert X\right\vert _{0,\left[
0,1\right] }$, cf. Theorem \ref{ThExpTail}, we see that $\left\vert
X\right\vert _{\varphi _{p,1}\text{-H\"{o}l},\left[ 0,1\right] }$ has a
Gauss tail. The same argument works for any other $\varphi $-modulus for
which $\varphi \sim \varphi _{p,1}$ near $0+$. th and in combination with $%
\left( \ref{EqModulusWithZeta}\right) $ and $\left( \ref{EqMGaussTail}%
\right) $ the last expression is the sum of two r.v. with a Gauss tail. It
is easy to see that the sum of two r.v. with a Gauss tail has again a Gauss
tail. The proof is finished by the observation that for every $\varphi $
with $\varphi \left( x\right) \sim \varphi _{p,1}\left( x\right) $ for $%
x\rightarrow 0$ the same argument works.
\end{proof}

\begin{theorem}
\label{ThExpTail}Let $X$ satisify Condition \ref{ConditionExp}. Then there
exists a constant $C$ such that for all $a<b$ in $\left[ 0,1\right] $ we
have 
\begin{equation*}
\mathbb{P}\left[ \left\vert X\right\vert _{0,\left[ a,b\right] }>x\right]
\leq C\exp \left( -\frac{1}{C}\left( \frac{x}{\left\vert b-a\right\vert
^{1/p}}\right) ^{2}\right)
\end{equation*}%
where $\left\vert X\right\vert _{0,\left[ a,b\right] }:=\sup_{a\leq s<t\leq
b}d\left( X_{s},X_{t}\right) .$
\end{theorem}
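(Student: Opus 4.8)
The plan is to run the Garsia--Rodemich--Rumsey (GRR) machinery already used in Theorem \ref{ThLevyModulusAS}, but now tracking how every constant depends on the interval length $L=b-a$. The cleanest way to expose this scaling is to reparametrize first. Set $Y_r:=X_{a+Lr}$ for $r\in[0,1]$; since $X$ has a continuous version the diameter is invariant, so $|X|_{0,[a,b]}=|Y|_{0,[0,1]}$ and it suffices to control the tail of $|Y|_{0,[0,1]}$. Because $d(Y_r,Y_{r'})/|r-r'|^{1/p}=L^{1/p}\,d(X_s,X_t)/|s-t|^{1/p}$ with $s=a+Lr,\ t=a+Lr'$, the process $Y$ again satisfies Condition \ref{ConditionExp} on $[0,1]$, but with parameter $\eta_Y=\eta/L^{2/p}\ge\eta$ and with the \emph{same} uniform bound $K$ coming from (\ref{EqConditionforGRR}). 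The factor $L^{1/p}$ produced here is exactly what will furnish the $|b-a|^{1/p}$ normalization in the statement.

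Next I would prove an interval-free estimate: for \emph{any} process $Z$ on $[0,1]$ satisfying Condition \ref{ConditionExp} with parameter $\eta_Z$ and constant $K$, one has $\mathbb{P}[|Z|_{0,[0,1]}>x]\le C_1 e^{-c_0\eta_Z x^2}$ with $C_1,c_0$ depending only on $p,K$. Applying GRR with $\Psi(x)=e^{\eta_Z x^2}-1$ and $q(x)=x^{1/p}$ and taking the supremum over $s<t$ (so $|t-s|\le 1$) gives
$$|Z|_{0,[0,1]}\le\frac{c_p}{\sqrt{\eta_Z}}\,M_Z,\qquad M_Z:=\int_0^1\sqrt{\log(1+4F_Z/u^2)}\,u^{1/p-1}\,du,$$
where $F_Z=\iint_{[0,1]^2}\Psi\big(d(Z_u,Z_v)/|u-v|^{1/p}\big)\,du\,dv$ satisfies $\mathbb{E}F_Z\le K-1$ uniformly. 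The key is that $M_Z$ has a \emph{uniform} Gaussian exponential moment: Cauchy--Schwarz yields $M_Z^2\le p\int_0^1\log(1+4F_Z/u^2)\,u^{1/p-1}\,du$, and then Jensen (convexity of $\exp$ against the probability measure $p^{-1}u^{1/p-1}\,du$) together with $1+4F_Z/u^2\le(1+4F_Z)\,u^{-2}$ gives
$$\mathbb{E}\,e^{\lambda M_Z^2}\le\tfrac1p\,\mathbb{E}\big[(1+4F_Z)^{\lambda p^2}\big]\int_0^1 u^{1/p-1-2\lambda p^2}\,du.$$
Choosing a fixed $\lambda_0=\lambda_0(p)$ with $\lambda_0<\tfrac{1}{2p^3}$ and $\lambda_0\le p^{-2}$ makes the $u$-integral converge and linearizes $(1+4F_Z)^{\lambda_0 p^2}\le 1+4F_Z$, so $\mathbb{E}\,e^{\lambda_0 M_Z^2}\le C_1(p,K)$ independently of $\eta_Z$. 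Markov then gives $\mathbb{P}[M_Z>m]\le C_1 e^{-\lambda_0 m^2}$, which transfers through $|Z|_{0,[0,1]}\le c_p\,\eta_Z^{-1/2}M_Z$ to the claimed bound with $c_0=c_0(p)$.

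Finally I would specialize to $Z=Y$, $\eta_Y=\eta/L^{2/p}$, to obtain
$$\mathbb{P}\big[|X|_{0,[a,b]}>x\big]=\mathbb{P}\big[|Y|_{0,[0,1]}>x\big]\le C_1\exp\!\Big(-c_0\,\eta\,\big(x/L^{1/p}\big)^2\Big),$$
and absorb $C_1$ and $(c_0\eta)^{-1}$ into a single constant $C$. The qualitative Gauss-tail argument is the same as in Theorem \ref{ThLevyModulusAS} and is routine; the genuinely delicate point, and the one I would be most careful about, is the uniformity bookkeeping. Everything must be controlled by $p,\eta,K$ alone, with no residual dependence on $L$: this forces $\lambda_0$ to be chosen depending only on $p$ (so the deterministic $u$-integral stays finite and $C_1$ stays bounded), while it is precisely the scaling identity $\eta_Y=\eta/L^{2/p}$ — rather than the tail argument itself — that carries the explicit $|b-a|^{1/p}$ normalization. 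All remaining details (the exact GRR constant $c_p$, the value of $\lambda_0$) are immaterial and vanish into $C$.
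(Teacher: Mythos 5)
Your proof is correct and takes essentially the same route as the paper: reparametrize to $[0,1]$ via $Z_t=X_{a+t(b-a)}$ so that the rescaled Condition \ref{ConditionExp} carries the factor $\left\vert b-a\right\vert ^{1/p}$, then apply Garsia--Rodemich--Rumsey with the exponential/power pair and conclude by Markov's inequality. The only difference is that you make the uniformity bookkeeping explicit (the Gaussian moment of the GRR majorant $M_Z$ with constants depending only on $p,\eta,K$), which the paper leaves implicit by pointing back to the computation in Theorem \ref{ThLevyModulusAS}.
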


\begin{proof}
\begin{equation*}
\sup_{a\leq s<t\leq b}\mathbb{E}\exp \left( \eta \left( \frac{d\left(
X_{s},X_{t}\right) }{\left\vert t-s\right\vert ^{1/p}}\right) ^{2}\right)
<\infty
\end{equation*}%
implies, setting $Z_{t}=X_{a+t\left( b-a\right) },$%
\begin{equation*}
\sup_{s,t\in \left[ 0,1\right] }\mathbb{E}\exp \left( \eta \left( \frac{%
d\left( Z_{s},Z_{t}\right) }{\left\vert b-a\right\vert ^{1/p}\left\vert
t-s\right\vert ^{1/p}}\right) ^{2}\right) <\infty .
\end{equation*}
We have $\left( b-a\right) ^{\alpha }\left\vert X\right\vert _{\alpha \text{%
-H\"{o}l;}\left[ a,b\right] }=\left\vert Z\right\vert _{\alpha \text{-H\"{o}%
l;}\left[ 0,1\right] }$ and by Garsia-Rodemich-Rumsey, for any $0\leq \alpha
<1/p,$ 
\begin{equation*}
\exists \tilde{\eta}>0:\mathbb{E}\exp \left( \tilde{\eta}\left( \frac{%
\left\vert Z\right\vert _{\alpha \text{-H\"{o}l;}\left[ 0,1\right] }}{%
\left\vert b-a\right\vert ^{1/p}}\right) ^{2}\right) <\infty .
\end{equation*}%
It now suffices to take $\alpha =0$ and use Markov's inequality.
\end{proof}

\subsection{Taylor's Variation}

\begin{theorem}
\label{TaylorVar}Let $X$ satisify Condition \ref{ConditionExp}. Then with
probability $1$,%
\begin{equation}
V_{\psi _{p,2}}\left( X\right) :=\sup_{D\subset \left[ 0,1\right]
}\sum_{i:t_{i}\in D}\psi _{p,2}\left( d\left( X_{t_{i}},X_{t_{i+1}}\right)
\right) <\infty \text{.}  \label{EqPsiVarfinite}
\end{equation}%
In the notation of appendix \ref{AppendixVariationNorms} this is equivalent
to%
\begin{equation*}
\left\vert X\right\vert _{\psi _{p,2}\text{-var;}\left[ 0,1\right] }<\infty 
\text{ a.s.}
\end{equation*}
\end{theorem}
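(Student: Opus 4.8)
The plan is to adapt Taylor's argument \cite{Taylor72} for Brownian motion to the present abstract metric setting, using Condition \ref{ConditionExp} (equivalently the Gaussian tail it provides) as the sole probabilistic input. By Definition \ref{DefPsiAndPhi} only the behaviour of $\psi_{p,2}$ near $0+$ is relevant, where $\psi_{p,2}(x)=x^p/(\log\log(1/x))^{p/2}$. Since the continuous version of $X$ is bounded and uniformly continuous on $[0,1]$, I would first reduce to small increments: for fixed $\delta>0$ any dissection has only boundedly many intervals with $d(X_{t_i},X_{t_{i+1}})\ge\delta$, contributing a finite amount, so it suffices to bound $\sum_i\psi_{p,2}(d(X_{t_i},X_{t_{i+1}}))$ over intervals with small increments, \emph{uniformly} over dissections $D\subset[0,1]$.

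The probabilistic heart is a large-deviation count of dyadic oscillations. For the $2^n$ level-$n$ dyadic intervals $I^n_j=[(j-1)2^{-n},j2^{-n}]$, Theorem \ref{ThExpTail} gives the Gauss tail $\mathbb{P}[\,|X|_{0,I^n_j}>\lambda\,2^{-n/p}\,]\le C\exp(-\lambda^2/C)$, so the counting variable $G_n(\lambda):=\#\{j:|X|_{0,I^n_j}>\lambda 2^{-n/p}\}$ satisfies $\mathbb{E}\,G_n(\lambda)\le 2^n C e^{-\lambda^2/C}$. Choosing $\lambda=\lambda_n\sim c\sqrt{n}$ with $c$ large enough that $\sum_n 2^n e^{-\lambda_n^2/C}<\infty$, Borel--Cantelli yields almost surely: for all large $n$ every level-$n$ oscillation is at most $c\,2^{-n/p}\sqrt{n}$ (the Lévy-type modulus $\varphi_{p,1}$ of Theorem \ref{ThLevyModulusAS}), together with quantitative a.s.\ bounds on $G_n(\lambda)$ for intermediate $\lambda$. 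The passage from arbitrary dissections to dyadics is then immediate via the oscillations: each dissection interval of length in $[2^{-n-1},2^{-n})$ is covered by at most two adjacent level-$n$ dyadics, so its increment is dominated by the corresponding oscillations, and I group the dissection intervals by this "scale" $n$. Writing $\ell_n$ for the total length of scale-$n$ intervals, disjointness forces $\sum_n\ell_n\le 1$.

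Next I split the variation sum into a bulk part (increments below a fixed multiple $\lambda_0 2^{-n/p}$ of the typical size) and an excess part. For the bulk one has $\psi_{p,2}(\lambda_0 2^{-n/p})\le \lambda_0^p\,2^{-n}/(\log n)^{p/2}\le\lambda_0^p\,2^{-n}$, and since there are at most $a_n\approx 2^n\ell_n$ scale-$n$ intervals the bulk contribution is at most $\lambda_0^p\sum_n a_n 2^{-n}\lesssim\lambda_0^p\sum_n\ell_n\le 2\lambda_0^p<\infty$, uniformly over all dissections.

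The main obstacle is the excess, and here the double-logarithm in $\psi_{p,2}$ must be used in an essential way. Any bound that ignores the disjointness of the dissection intervals --- for instance replacing the per-dissection count by the full dyadic count $G_n(\lambda)$ and summing over all scales --- produces a divergent series of the form $\sum_n(\log n)^{-p/2}$; this is exactly the failure flagged in the introduction of extracting variation regularity from a modulus estimate. Finiteness is rescued by combining three ingredients: the rarity of large oscillations (the a.s.\ bounds on $G_n(\lambda)$ together with the fast Gaussian decay $e^{-\lambda^2/C}$ in the size-shells $\lambda>\lambda_0$), the Lévy cap $\lambda\le c\sqrt{n}$ which ties the admissible shell size to the scale, and the length budget $\sum_n\ell_n\le 1$, which prevents excess intervals from appearing at every scale simultaneously. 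Organising the excess as a double sum over scales $n$ and size-shells $\lambda$ and estimating each contribution $\lesssim \ell_n^{\mathrm{exc}}\,\lambda^p/(\log n)^{p/2}$ against these constraints, one checks that the $(\log n)^{-p/2}$ weight is precisely what the $\log\log$ in $\psi_{p,2}$ supplies, so the total excess is finite almost surely. Getting this bookkeeping right --- in particular keeping the per-dissection count $\min(a_n,G_n(\lambda))$ rather than $G_n(\lambda)$ throughout --- is the delicate technical core, and it is where the calibration $x^p/(\log\log(1/x))^{p/2}$ (rather than $x^p/(\log(1/x))^{p/2}$) is decisive. Finiteness over all dissections then gives $|X|_{\psi_{p,2}\text{-var};[0,1]}<\infty$ a.s.\ in the notation of Appendix \ref{AppendixVariationNorms}.
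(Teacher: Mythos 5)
Your toolkit is the same as the paper's --- a geometric/dyadic family of covering intervals, the Gauss tail of Theorem \ref{ThExpTail} turned into oscillation counts via Borel--Cantelli, the L\'{e}vy-modulus cap of Theorem \ref{ThLevyModulusAS}, and the length budget coming from disjointness --- and your bulk estimate and your diagnosis of why a disjointness-blind count diverges are both correct. But the proposal stops exactly where the theorem lives: finiteness of the excess sum is asserted (``one checks\ldots'') rather than proved, and with the bulk/excess split made at a \emph{fixed} multiple $\lambda_{0}$ of the typical increment size, the deferred bookkeeping is genuinely problematic. For the intermediate shells $\lambda_{0}\leq \lambda \lesssim \sqrt{\log n}$ neither of your ingredients works on its own: the rarity bound $G_{n}\left( \lambda \right) \lesssim 2^{n}e^{-\lambda ^{2}/C}$ gives a per-scale contribution of order $e^{-\lambda_{0}^{2}/C}\left( \log n\right) ^{-p/2}$, which diverges when summed over $n$ (your own observation), while the length bound $\ell _{n,\lambda }\lambda ^{p}\left( \log n\right) ^{-p/2}$ combined only with the cap $\lambda \leq c\sqrt{n}$ gives $\ell _{n}\,n^{p/2}\left( \log n\right) ^{-p/2}$, which cannot be summed against $\sum_{n}\ell _{n}\leq 1$ either, since the $\ell _{n}$ may concentrate at a single large $n$ (e.g.\ the uniform dissection of mesh $2^{-n}$). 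So the three ingredients do not combine ``shell by shell''; one must specify where each is in charge, and that specification is the missing step.

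The missing organizing idea --- which the paper builds in from the first line of its proof --- is that the good/bad threshold must be \emph{scale-dependent}, at increment size $\asymp \left( \text{length}\right) ^{1/p}\sqrt{\log \left( 1/\text{length}\right) }$, i.e.\ an interval $\left( t_{i-1},t_{i}\right) $ is declared good iff $\psi _{p,2}\left( d\left( X_{t_{i-1}},X_{t_{i}}\right) \right) \leq c_{1}\left( t_{i}-t_{i-1}\right) $, which in your shell language is precisely $\lambda \lesssim \sqrt{\log n}$. Below this threshold the $\left( \log n\right) ^{-p/2}$ weight makes every shell contribute $O\left( \ell _{n,\lambda }\right) $, so the whole region is killed by the length budget alone; this is the paper's one-line bound $\sum_{\text{good}}\leq c_{1}$, and it is exactly the regime where the double logarithm is decisive. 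Above the threshold, the oscillation of the covering interval exceeds $\varphi _{p,2}\left( c_{2}h_{n}\right) \asymp h_{n}^{1/p}\sqrt{\log \left( 1/h_{n}\right) }$, an event of probability $\lesssim n^{-5p}$ per interval --- polynomial in $n$ rather than exponentially small, again because of the double logarithm --- so Borel--Cantelli gives at most $Z_{n}\leq n^{-2p}h_{n}^{-1}$ such intervals a.s.\ for large $n$, the L\'{e}vy modulus caps each term by $C^{p}h_{n}\left( \log \left( 1/h_{n}\right) \right) ^{p/2}$, and the bad sum is bounded by $\sum_{n}Z_{n}\cdot C^{p}h_{n}n^{p/2}\lesssim \sum_{n}n^{-3p/2}<\infty $. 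Note that this last bound never refers to the dissection at all, so no $\min \left( a_{n},G_{n}\left( \lambda \right) \right) $ bookkeeping is needed and uniformity over $D$ is automatic. Your argument closes the same way once you move the threshold from $\lambda _{0}$ to $\sqrt{\log n}$: shells below go to the length budget, shells above go to rarity plus the L\'{e}vy cap, and the intermediate regime you flag as the ``delicate technical core'' disappears. As written, however, that core is not carried out, so the proof is incomplete.
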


Let us remark that (\ref{EqPsiVarfinite}) holds for any function $\psi $ (in
a reasonable class, cf. appendix A) for which%
\begin{equation*}
\lim \sup_{s\rightarrow 0}\psi \left( s\right) /\psi _{p,2}\left( s\right)
<\infty ;
\end{equation*}%
this follows readily from (\ref{EqPsiVarfinite})$\Leftrightarrow $(\ref%
{EqLimDeltaPsiVar}). Good examples include $s\mapsto s^{p+\varepsilon }$ and 
$s\mapsto \psi _{p,1}\left( s\right) $. It should be emphasized that the
latter statement $\left\vert X\right\vert _{\psi _{p,1}\text{-var;}\left[ 0,1%
\right] }<\infty $ a.s.\ is a trivial consequence of $\varphi _{p,1}$-H\"{o}%
lder regularity (cf.\ Theorem \ref{ThLevyModulusAS}). However, examples show
that finite $\psi _{p,2}$-variation can hold without having finite $\varphi
_{p,2}$-H\"{o}lder modulus (e.g.\ Brownian motion with $p=2$).

\begin{lemma}
Consider a sequence of positive real numbers $\left( h_{n}\right) \downarrow
0$ such that\footnote{%
This rules out $e^{-n^{2}}$ for instance.}%
\begin{equation*}
C:=2\sup_{n}h_{n-1}/h_{n}<\infty
\end{equation*}%
and define for each $n\in \left\{ 1,2,\dots \right\} $ a family of intervals%
\begin{equation*}
\mathcal{J}_{n}:=\left\{ J_{n,i}:=\left[ \frac{i}{2}h_{n},\left( \frac{i}{2}%
+1\right) h_{n}\right] :i\in \left\{ 1,\dots ,\left[ 2/h_{n}\right]
+1\right\} \right\}
\end{equation*}%
Then there exists a $\delta >0$ only depending on $\left( h_{n}\right) $
such that any interval $\left( s,t\right) \subset \left[ 0,1\right] $ with $%
\left\vert t-s\right\vert <\delta $ is well approximated by some interval $%
J_{n,i}$ in the sense that the following conditions are satisfied,%
\begin{equation*}
\left( t-s\right) \subset J_{n,i}\text{ and \ }\left\vert J_{n,i}\right\vert
=h_{n}\leq C\left\vert t-s\right\vert .\text{ }
\end{equation*}%
Furthermore, for fixed $\left( h_{n}\right) $ the choice of $n$ depends only
on $\left( t-s\right) $ and $n\uparrow \infty $ as $\left( t-s\right)
\downarrow 0$.
\end{lemma}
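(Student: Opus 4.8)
The plan is to decouple the two data of the small interval $\left( s,t\right) $, namely its length $\ell :=t-s$ and its position $s$: first I would pick the scale $n$ out of the sequence $\left( h_{n}\right) $ using only $\ell $, and then, at that fixed scale, pick the index $i$ by a floor function depending on $s$. The doubling-type hypothesis $C=2\sup_{n}h_{n-1}/h_{n}<\infty $ is exactly what reconciles the two competing demands on $h_{n}$: it must be large enough that some $J_{n,i}$ of length $h_{n}$ can swallow $\left( s,t\right) $, yet small enough that $h_{n}\leq C\ell $.

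For the choice of scale I would set $n:=\max \left\{ m:h_{m}\geq 2\ell \right\} $. This maximum exists (and is positive) as soon as $\ell <\delta :=h_{1}/2$, since $h_{m}\downarrow 0$; moreover $n$ manifestly depends only on $\ell =t-s$ and $n\uparrow \infty $ as $\ell \downarrow 0$, which already disposes of the two ``furthermore'' assertions. By maximality $h_{n+1}<2\ell \leq h_{n}$. Rewriting the hypothesis as $h_{m}\leq \left( C/2\right) h_{m+1}$ for every $m$ and applying it at $m=n$ gives $h_{n}\leq \left( C/2\right) h_{n+1}<\left( C/2\right) \cdot 2\ell =C\ell $. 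Thus $2\ell \leq h_{n}<C\ell $: the upper bound is precisely the asserted $\left\vert J_{n,i}\right\vert =h_{n}\leq C\left\vert t-s\right\vert $, while the lower bound $\ell \leq h_{n}/2$ supplies the slack needed to guarantee containment at the next step.

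For the placement I would fix $h:=h_{n}$ and take $i:=\lfloor 2s/h\rfloor $, so that $\tfrac{i}{2}h\leq s<\tfrac{i+1}{2}h$. The left endpoint is then correct, $s\geq \tfrac{i}{2}h$, and on the right $t=s+\ell <\tfrac{i+1}{2}h+\tfrac{h}{2}=\left( \tfrac{i}{2}+1\right) h$, where I use $\ell \leq h/2$ from the scale step. Hence $\left( s,t\right) \subset \left[ \tfrac{i}{2}h,\left( \tfrac{i}{2}+1\right) h\right] =J_{n,i}$, which is the containment claim; and since $s\leq 1$ forces $i\leq \lfloor 2/h_{n}\rfloor =[2/h_{n}]$, the index $i$ falls in the stated range.

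The genuinely load-bearing step is the scale selection: it is the doubling condition that stops the (geometric) gaps of $\left( h_{n}\right) $ from straddling the window $[2\ell ,C\ell )$, so that a usable term always sits inside it; without a bound on $h_{n-1}/h_{n}$ the smallest admissible scale could have $h_{n}\gg \ell $ and the length estimate would break. The only delicate point I foresee is the left boundary: for $s<h_{n}/2$ the floor returns $i=0$, which lies outside $\left\{ 1,\dots \right\} $, so I would either extend the index set to include the harmless interval $J_{n,0}=[0,h_{n}]$ or simply observe that $\left( s,t\right) \subset \left[ 0,h_{n}\right] $ directly there; neither alters the length bound $h_{n}\leq C\left\vert t-s\right\vert $.
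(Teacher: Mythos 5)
Your proposal is correct and follows essentially the same strategy as the paper's own proof: a half-overlapping grid, a scale $n$ chosen from $(h_n)$ by a threshold on $\ell = t-s$, the index $i$ chosen by a floor/maximality rule in $s$, and the doubling hypothesis $C = 2\sup_n h_{n-1}/h_n < \infty$ used to bridge the containment and length requirements. The only difference is dual bookkeeping — the paper takes $n$ with $h_n \leq C\ell < h_{n-1}$ and derives $\ell \leq h_n/2$ from doubling, whereas you take the largest $n$ with $h_n \geq 2\ell$ and derive $h_n \leq C\ell$ from doubling — and your explicit treatment of the $i=0$ boundary case (when $s < h_n/2$) is a point the paper's proof silently glosses over.
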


\begin{proof}
We choose the largest $n$ which still satisfies the second condition, that is%
\begin{equation}
h_{n}\leq C\left( t-s\right) <h_{n-1}.  \label{LemmaTaylorVarChoiceOfN}
\end{equation}%
We then choose the largest possible index $i$ so that $J_{n,i}$ satisfies
the first condition, that is%
\begin{equation*}
\frac{i}{2}h_{n}<s<\frac{i+1}{2}h_{n},
\end{equation*}%
and note that $\left( s-\frac{i}{2}h_{n}\right) <h_{n}/2$. To see that $%
\left( s,t\right) $ is indeed contained in $J_{n,i}$ it is enough to check
that%
\begin{equation*}
h_{n}/2+\left( t-s\right) \leq \left\vert J_{n,i}\right\vert =h_{n}\text{ or
equivalently \ }\frac{t-s}{h_{n}}\leq \frac{1}{2}\text{.}
\end{equation*}%
But this is true by (\ref{LemmaTaylorVarChoiceOfN}) and definition of $C$
since 
\begin{equation*}
\frac{t-s}{h_{n}}=\frac{t-s}{h_{n-1}}\frac{h_{n-1}}{h_{n}}\leq \frac{t-s}{%
C\left( t-s\right) }\sup_{n}\frac{h_{n-1}}{h_{n}}=\frac{1}{2}.
\end{equation*}
\end{proof}

\begin{proof}[Proof of Theorem \protect\ref{TaylorVar}]
As in \cite{Taylor72} it is enough to show that 
\begin{equation}
\lim_{\delta \rightarrow 0}\sup_{D\in \mathcal{D}\left( \delta \right)
}\sum_{i:t_{i}\in D}\psi _{p,2}\left( d\left( X_{t_{i}},X_{t_{i+1}}\right)
\right) <\infty \text{ a.s.}  \label{EqLimDeltaPsiVar}
\end{equation}%
For a given $D$ call an interval $\left( t_{i-1},t_{i}\right) $ in the
dissection $D$ \textit{good} if $\psi _{p,2}\left( d\left(
X_{t_{i}},X_{t_{i+1}}\right) \right) <c_{1}\left( t_{i}-t_{i-1}\right) $ for
some deterministic $c_{1}$ to be determined by equation (\ref%
{EqChooseConstant}) below. Call this interval \textit{bad} otherwise.
Clearly,%
\begin{eqnarray}
\sum_{i}\psi _{p,2}\left( \left\vert X\right\vert \right) &=&\sum_{\text{%
good intervals }\left( t_{i-1},t_{i}\right) }+\sum_{\text{bad intervals }%
\left( t_{i-1},t_{i}\right) }  \notag \\
&\leq &c_{1}+\sum_{\text{bad intervals }\left( t_{i-1},t_{i}\right) }
\label{DetBoundForVarPsiwithMeshto0}
\end{eqnarray}%
and we only need to deal with bad intervals. We will see that, provided $%
\left\vert D\right\vert <\delta $ is small enough, the sum over the bad
intervals can be controlled. Let $\left( s,t\right) $ be a bad interval in $%
D $ with $\left\vert t-s\right\vert <\delta _{1}$ where $\delta _{1}$ is the
constant whose existence is guaranteed by the previous lemma. The same
lemma, applied with $h_{n}=e^{-n}$ and $C=2e$, implies that we can find $i$
and $n$ such that 
\begin{equation*}
\left( s,t\right) \subset J_{n,i}=\left[ \frac{i}{2}h_{n},\left( \frac{i}{2}%
+1\right) h_{n}\right]
\end{equation*}%
and $h_{n}=\left\vert J_{n,i}\right\vert <2e\left( t-s\right) $. In
particular,%
\begin{equation}
\psi _{p,2}\left( \left\vert X\right\vert _{0,J_{n,i}}\right) \geq \psi
_{p,2}\left( d\left( X_{s},X_{t}\right) \right) \geq c_{1}\left( t-s\right)
\geq c_{2}h_{n}  \label{TaylorBefore4point7}
\end{equation}%
where we set $c_{2}=c_{1}/\left( 2e\right) $ and used that $\left(
s,t\right) $ is bad. Recalling $\varphi _{p,2}\left( \psi _{p,2}\left(
s\right) \right) \sim s$ as $s\rightarrow 0$ and $\varphi _{p,2}\left( \psi
_{p,2}\left( s\right) \right) =s$ for $s\geq 1$ we obviously have $\varphi
_{p,2}\left( \psi _{p,2}\left( s\right) \right) \leq c_{3}s$. \ Hence, using
in particular Theorem \ref{ThExpTail}, and writing $c_{4}$ for the constant
whose existence it guarantees,%
\begin{eqnarray}
&&\mathbb{P}\left[ \psi _{p,2}\left( \left\vert X\right\vert
_{0,J_{n,i}}\right) >c_{2}h_{n}\right]  \notag \\
&\leq &\mathbb{P}\left[ c_{3}\left\vert X\right\vert _{0,J_{n,i}}>\varphi
_{p,2}\left( c_{2}h_{n}\right) \right]  \notag \\
&\leq &c_{4}\exp \left[ -\frac{1}{c_{4}}\left( \frac{\varphi _{p,2}\left(
c_{2}h_{n}\right) }{c_{3}h_{n}^{1/p}}\right) ^{2}\right]  \notag \\
&=&c_{4}\exp \left[ -\frac{1}{c_{4}}\left( \frac{c_{2}^{1/p}}{c_{3}}\right)
^{2}\log _{2}\left( c_{2}h_{n}\right) \right] .  \label{EqTailPsiOfX}
\end{eqnarray}%
We now choose $c_{1}$ such that%
\begin{equation}
\frac{1}{c_{4}}\left( \frac{c_{1}^{1/p}}{c_{3}\left( 2e\right) ^{1/p}}%
\right) ^{2}=\frac{1}{c_{4}}\left( \frac{c_{2}^{1/p}}{c_{3}}\right) ^{2}=5p
\label{EqChooseConstant}
\end{equation}%
Note that for $n$ greater than some $n_{1}$ large enough, $\log _{2}\left(
c_{2}h_{n}\right) =\log \left( -\log \left( c_{2}h_{n}\right) \right) $, and
(\ref{EqTailPsiOfX}) reads%
\begin{equation*}
c_{4}\frac{1}{\left( -\log \left( c_{2}h_{n}\right) \right) ^{5p}}\leq c_{4}%
\frac{2^{5p}}{n^{5p}}\equiv c_{5}\frac{1}{n^{5p}}
\end{equation*}%
where the estimate holds true provided $n\geq n_{2}$ large enough so that 
\begin{equation}
-\log \left( c_{2}h_{n}\right) =-\log c_{2}+n\geq n/2.
\label{EqLog2Estimate}
\end{equation}%
We established that%
\begin{equation*}
\mathbb{P}\left[ \psi _{p,2}\left( \left\vert X\right\vert
_{0,J_{n,i}}\right) >c_{2}h_{n}\right] \leq c_{5}n^{-5p}\text{ for }n\geq
n_{1}\vee n_{2}.
\end{equation*}%
If $Z_{n}=Z_{n}\left( \omega \right) $ denotes the number of intervals in $%
\mathcal{J}_{n}$ which satisfy%
\begin{equation}
\psi _{p,2}\left( \left\vert X\right\vert _{0,J_{n,i}}\right) >c_{2}h_{n}
\label{Taylor4point7}
\end{equation}%
Since $Z_{n}$ is the sum (over $i=1,\dots ,\left[ 2/h_{n}\right] +1$) of all
indicator functions of the events $\left\{ \psi _{p,2}\left( \left\vert
X\right\vert _{0,J_{n,i}}\right) >c_{2}h_{n}\right\} ,$%
\begin{eqnarray*}
\mathbb{E}\left( Z_{n}\right) &\leq &\left\vert \mathcal{J}_{n}\right\vert
\times c_{5}n^{-5p} \\
&\leq &c_{5}\left( 2/h_{n}+1\right) n^{-5p} \\
&=&c_{6}h_{n}^{-1}n^{-5p}
\end{eqnarray*}%
Introduce the event $A_{n}=\left\{ Z_{n}>n^{-2p}h_{n}^{-1}\right\} $. Then%
\begin{equation*}
\mathbb{P}\left( A_{n}\right) \leq n^{2p}h_{n}\mathbb{E}\left( Z_{n}\right)
=c_{6}n^{-3p}
\end{equation*}%
and $\sum_{n}\mathbb{P}\left( A_{n}\right) <\infty $, after all we have $%
p\geq 1$ fixed. The Borel-Cantelli lemma now implies that $\mathbb{P}\left(
A_{n}\text{ infinitely often}\right) =0$. Equivalently, with probability $1$
there exists $N\left( \omega \right) $ such that%
\begin{equation*}
Z_{n}\leq n^{-2p}h_{n}^{-1}\text{ for all }n\geq N\left( \omega \right) .
\end{equation*}%
Using a.s.\ finiteness of the L\'{e}vy's modulus "norm", theorem \ref%
{ThLevyModulusAS}, there exists $C_{6}\left( \omega \right) $, finite almost
surely, so that for any $i\in \left\{ 1,\dots ,\left[ 2/h_{n}\right]
+1\right\} $,%
\begin{equation*}
\left\vert X\right\vert _{0,J_{n,i}}<C_{6}\,h_{n}^{1/p}\sqrt{\log _{1}h_{n}}
\end{equation*}%
From our definition of $\psi _{p,2}$ we have $\psi _{p,2}\left( s\right)
\leq s^{p}$ for all $s$ and so 
\begin{equation*}
\psi _{p,2}\left( \left\vert X\right\vert _{0,J_{n,i}}\right) \leq
C_{6}^{p}h_{n}\left( \log _{1}h_{n}\right) ^{p/2}.
\end{equation*}%
Now, for $n\geq N\left( \omega \right) $, the sum of $\psi _{p,2}\left(
\left\vert X\right\vert _{0,J_{n,i}}\right) $ over all intervals $J_{n,i}\in 
\mathcal{J}_{n}$ which satisfy (\ref{Taylor4point7}) is at most%
\begin{equation}
Z_{n}\times C_{6}^{p}h_{n}\left( \log _{1}h_{n}\right) ^{p/2}\leq
C_{6}^{p}n^{-2p}\left( \log _{1}h_{n}\right) ^{p/2}=C_{6}^{p}n^{-3p/2}.
\label{TaylorEstimateForFinalStep}
\end{equation}%
As remarked in (\ref{TaylorBefore4point7}) every bad interval $\left(
s,t\right) $ of lenght smaller than $\delta _{1}$ is contained in some $%
J_{n,i}\in \mathcal{J}_{n}$ and such that $\psi _{p,2}\left( \left\vert
X\right\vert _{0,J_{n,i}}\right) \geq c_{2}h_{n}.$Let $\delta =\delta \left(
\omega \right) \in \left( 0,\delta _{1}\right) $ be small enough such that
the (in the sense of lemma above) $n=n\left( \delta \right) >N\left( \omega
\right) $ and so we are only dealing with intervals $J_{n,i}$ to which our
estimates apply. Then for any partition $D$ with $\left\vert D\right\vert
<\delta \left( \omega \right) $, 
\begin{eqnarray*}
&&\sum_{\text{bad intervals }\left( t_{i-1},t_{i}\right) \in D}\psi
_{p,2}\left( d\left( X_{t_{i}},X_{t_{i+1}}\right) \right) \\
&\leq &\sum_{n=m\left( \delta \left( \omega \right) \right) }^{\infty
}\sum_{ _{\substack{ J_{n,i}\in \mathcal{J}_{n}\text{ for which}  \\ \text{(%
\ref{Taylor4point7}) holds}}}}\psi _{p,2}\left( \left\vert X\right\vert
_{0,J_{n,i}}\right) \\
&\leq &\sum_{n=m\left( \delta \left( \omega \right) \right) }^{\infty
}C_{6}^{p}n^{-p3/2}\text{ \ \ thanks to (\ref{TaylorEstimateForFinalStep}).}
\end{eqnarray*}%
and this sum is finite almost surely as required. (The last step actually
shows that we get a deterministic upper bound in (\ref{EqLimDeltaPsiVar})
but this is irrelevant for our purposes.)
\end{proof}

\subsection{Law of Iterated Logarithm}

A law of iterated logarithm also holds in the generality of the present
setup of continuous processes on $\left[ 0,1\right] $ with values in some
metric space.

\begin{proposition}
\label{LIL}Let $X$ satisify Condition \ref{ConditionExp}.Then there exists a
constant $C<\infty $ s.t. 
\begin{equation*}
\lim \sup_{h\downarrow 0}\frac{\left\vert X\right\vert _{0;\left[ 0,h\right]
}}{\varphi _{p,2}\left( h\right) }\leq C\text{ a.s.}
\end{equation*}
\end{proposition}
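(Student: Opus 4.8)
The plan is to run a Borel--Cantelli argument along a geometric sequence of scales and then interpolate to recover the continuous $\limsup$. Fix a ratio $\rho \in (0,1)$ and set $h_n = \rho^n \downarrow 0$. The key input is the Gaussian tail estimate of Theorem \ref{ThExpTail}; denote its constant by $C_0$. Applying it on the interval $[0,h_n]$ with threshold $x = \lambda\,\varphi_{p,2}(h_n)$, for a constant $\lambda$ to be chosen large, and recalling that $\varphi_{p,2}(h) = h^{1/p}\sqrt{\log_2 h}$ near $0+$, the factors $h_n^{1/p}$ cancel inside the exponential and one is left with
\begin{equation*}
\mathbb{P}\left[ |X|_{0;[0,h_n]} > \lambda\,\varphi_{p,2}(h_n)\right] \le C_0 \exp\left( -\frac{\lambda^2}{C_0}\log_2 h_n\right) = C_0 \exp\left( -\frac{\lambda^2}{C_0}\log\log(1/h_n)\right).
\end{equation*}

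Next I would exploit the geometric spacing. Since $\log(1/h_n) = n\log(1/\rho)$, we have $\log\log(1/h_n) = \log n + O(1)$, so the right-hand side above is comparable to $n^{-\lambda^2/C_0}$. Choosing $\lambda$ large enough that $\lambda^2/C_0 > 1$ makes the series $\sum_n \mathbb{P}[\,|X|_{0;[0,h_n]} > \lambda\,\varphi_{p,2}(h_n)\,]$ convergent. The Borel--Cantelli lemma then yields an almost surely finite $N(\omega)$ such that $|X|_{0;[0,h_n]} \le \lambda\,\varphi_{p,2}(h_n)$ for all $n \ge N(\omega)$.

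Finally I would pass from the discrete scales to arbitrary $h \downarrow 0$. For small $h$ choose $n$ with $h_{n+1} \le h < h_n$; then $[0,h] \subset [0,h_n]$ gives $|X|_{0;[0,h]} \le |X|_{0;[0,h_n]}$, while monotonicity of $\varphi_{p,2}$ near $0+$ gives $\varphi_{p,2}(h) \ge \varphi_{p,2}(h_{n+1})$. Dividing, the target ratio is bounded for $n \ge N(\omega)$ by $\lambda\,\varphi_{p,2}(h_n)/\varphi_{p,2}(h_{n+1})$, and since
\begin{equation*}
\frac{\varphi_{p,2}(h_n)}{\varphi_{p,2}(h_{n+1})} = \rho^{-1/p}\sqrt{\frac{\log_2 h_n}{\log_2 h_{n+1}}} \longrightarrow \rho^{-1/p} \quad (n \to \infty),
\end{equation*}
one obtains $\limsup_{h \downarrow 0} |X|_{0;[0,h]}/\varphi_{p,2}(h) \le \lambda\,\rho^{-1/p} =: C$ almost surely, which is the claim (the proposition only asks for existence of a finite $C$, so no optimization over $\rho,\lambda$ is needed).

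The only genuine obstacle is calibrating the argument so that the tail series is summable: this is exactly what forces the geometric (rather than faster) decay of the scales $h_n$, since geometric spacing is what turns the $\log\log$ hidden in $\varphi_{p,2}$ into a plain $\log n$ and hence, after crossing the threshold $\lambda^2 > C_0$, into a convergent $p$-series. Everything else is routine: the interpolation between consecutive scales uses only monotonicity of $|X|_{0;[0,\cdot]}$ and of $\varphi_{p,2}$ near $0$, and the comparison of the two $\varphi_{p,2}$-values contributes merely the harmless factor $\rho^{-1/p}$ because the $\log_2$ terms have ratio tending to $1$.
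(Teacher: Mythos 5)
Your proof is correct and follows essentially the same route as the paper's: apply Theorem \ref{ThExpTail} along geometric scales $q^n$, note that the $\log\log$ in $\varphi_{p,2}$ turns the tail bound into a power of $n$ summable once the constant multiple exceeds the threshold, invoke Borel--Cantelli, and interpolate between consecutive scales. The only (immaterial) differences are that the paper bounds the ratio $\varphi_{p,2}(q^n)/\varphi_{p,2}(q^{n+1})\leq q^{-1}$ via monotonicity of $\varphi_{p,2}(h)/h$ rather than computing its limit, and it additionally optimizes over $q\uparrow 1$, $\varepsilon\downarrow 0$ to identify $C=\sqrt{c_1}$, which the proposition does not require.
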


\begin{proof}
The idea is to scale by a geometric sequence. Although not strictly
necessary for the conclusion, we show that $C=\sqrt{c_{1}}$ where $c_{1}$ is
the constant whose existence is guaranteed by Theorem \ref{ThExpTail}. To
this end, fix $\varepsilon >0,$ $q\in \left( 0,1\right) $ and set $c_{2}=%
\sqrt{\left( 1+\varepsilon \right) c_{1}}$. Let%
\begin{equation*}
A_{n}=\left\{ \left\vert X\right\vert _{0;\left[ 0,q^{n}\right] }\geq
c_{2}\varphi _{p,2}\left( q^{n}\right) \right\} .
\end{equation*}%
From Theorem \ref{ThExpTail} we see that for $n$ large enough%
\begin{eqnarray*}
\mathbb{P}\left( A_{n}\right) &=&\mathbb{P}\left[ \left\vert X\right\vert
_{0;\left[ 0,q^{n}\right] }\geq c_{2}\varphi _{p,2}\left( q^{n}\right) %
\right] \\
&\leq &c_{1}\exp \left( -\frac{1}{c_{1}}\left( \frac{c_{2}\varphi
_{p,2}\left( q^{n}\right) }{q^{n/p}}\right) ^{2}\right) \\
&=&c_{1}\exp \left( -\frac{1}{c_{1}}c_{2}^{2}\log _{2}q^{n}\right) \\
&=&c_{1}\left( -n\log q\right) ^{-c_{2}^{2}/c_{1}}
\end{eqnarray*}%
This is summable in $n$ and hence, by the Borel-Cantelli lemma, we get that
only finitely many of these events occur. It then follows easily that for
all $n\geq n_{0}\left( \varepsilon ,\omega \right) $ and $h$ small enough%
\begin{equation*}
q^{n+1}\leq h<q^{n}
\end{equation*}%
and so, since $\varphi _{p,2}\left( h\right) /h$ is decreasing, $\varphi
_{p,2}\left( q^{n}\right) /\varphi _{p,2}\left( q^{n+1}\right) \leq q^{-1}$,
and then 
\begin{equation*}
\frac{\left\vert X\right\vert _{0;\left[ 0,h\right] }}{\varphi _{p,2}\left(
h\right) }\leq \frac{\varphi _{p,2}\left( q^{n}\right) }{\varphi
_{p,2}\left( q^{n+1}\right) }\frac{\left\vert X\right\vert _{0;\left[ 0,q^{n}%
\right] }}{\varphi _{p,2}\left( q^{n}\right) }\frac{\varphi _{p,2}\left(
q^{n+1}\right) }{\varphi _{p,2}\left( h\right) }\leq q^{-1}\sqrt{\left(
1+\varepsilon \right) c_{1}}.
\end{equation*}%
We now pass to $\lim \sup_{h\rightarrow 0}$, followed by $q\uparrow 1$ and $%
\varepsilon \downarrow 0$. This finishes the proof.
\end{proof}

\section{Integrability of $\protect\psi _{p,2}$-variation norm\label%
{SectionIntegrability}}

We have seen that a (continuous) process $X:\left[ 0,1\right] \rightarrow
\left( E,d\right) $ which satisifies Condition \ref{ConditionExp} has a.s.\
finite $\psi _{p,2}$-variation. The aim of this section is to show that, for
large classes of Gaussian processes and Gaussian rough paths, the $\psi
_{p,2}$-variation "norm" (cf.\ appendix) enjoys Gaussian integrability. Due
to Theorem \ref{ThGeneralizedFernique} this reduces to check if the
assumptions (\ref{EqfFinite}) and (\ref{Control_H_translate}) of Theorem \ref%
{ThGeneralizedFernique} are satisfied.

\subsection{Gaussian paths\label{SS_Gauss}}

Assume that $X$ is a centered (continuous) Gaussian process so that%
\begin{equation*}
\sup_{D}\sum_{i:\left( t_{i}\right) \in D}\left\vert \mathbb{E}\left( \left(
X_{t_{i},t_{i+1}}\right) ^{2}\right) \right\vert ^{\rho }<\infty
\end{equation*}%
with $\sup $ taken over all dissections $D$ of $\left[ 0,1\right] $. This
condition appears in \cite{JainMonrad83} for instance. (For orientation, it
holds for Brownian motion with $\rho =1$ and fractional Brownian motion with 
$\,1/\rho =1/\left( 2H\right) $). Then a deterministic time-change of $X$,
say $Z$, satisfies%
\begin{equation*}
\sup_{0\leq s<t\leq 1}\frac{\mathbb{E}\left( \left\vert Z_{s,t}\right\vert
^{2}\right) }{\left\vert t-s\right\vert ^{1/\rho }}<\infty .
\end{equation*}%
With Gaussian integrability properties, $\mathbb{E}\left( \left\vert
Z_{s,t}\right\vert ^{p}\right) ^{1/p}\sim q^{1/2}\mathbb{E}(\left\vert
Z_{s,t}\right\vert ^{2})^{/2}$ this readily implies that $Z$ satisfies
condition \ref{ConditionExp}. Using the invariance of (generalized)
variation norms under reparametrization and theorem \ref{TaylorVar} we
conclude that the sample paths of $X$ are almost surely of finite $\psi
_{p,2}$-variation for $p=2\rho $. It is clear from the remark following
theorem \ref{TaylorVar} that one also has finite $V_{\psi ;\left[ 0,1\right]
}\left( x\right) <\infty $ for any other $\psi $-function such that $\lim
\sup_{s\rightarrow 0}\psi \left( s\right) /\psi _{p,2}\left( s\right)
<\infty $. If in addition to the standing assumpitions (cf. appendix A) one
has convexity of $\psi $, then $x\mapsto \left\vert x\right\vert _{\psi 
\text{-var;}\left[ 0,1\right] }$ gives rise to a Banach-norm $x\mapsto
\left\vert x\right\vert _{\psi \text{-var;}\left[ 0,T\right] }$ and a Gauss
tail follows from Fernique's classical result (or Theorem \ref%
{ThGeneralizedFernique} applied to $f=\left\vert .\right\vert $).

\subsection{Gaussian Rough paths\label{SS_GaussRoughPath}}

Fernique's classical estimates are not applicable to Gaussian rough paths
since the homogenuous norms involve the path level \textit{and }L\'{e}vy
area (which is not Gaussian). However, Gauss tail estimates for the $\psi
_{2,p}$-variation norm (and any other $\psi $ such that $\psi \equiv \psi
_{2,p}$ near $0+$) do follow from our generalized Fernique theorem. We
emphasize (again) that integrability properties of Wiener-It\^{o} chaos
(even Banach space valued) will not be sufficient for these estimates since
the homogenous norms we are dealing with are fundamentally non-linear.

We shall need the result below.

\begin{theorem}[\protect\cite{friz-victoir-07-DEdrivenGaussian_I}]
\label{Th2dimVarCMspace}Let $X$ be a continuous, centered Gaussian process
on $\left[ 0,1\right] $ with covariance $R\left( s,t\right) =\mathbb{E}%
\left( X_{s}X_{t}\right) $ such that 
\begin{equation*}
\left\vert R\right\vert _{\rho \text{-var,}\left[ 0,1\right] ^{2}}:=\sup_{D,%
\tilde{D}\subset \left[ 0,1\right] }\left( \sum_{i,j:t_{i}\in D,\tilde{t}%
_{j}\in \tilde{D}}\left\vert \mathbb{E}\left[ X_{t_{i},t_{i+1}}X_{\tilde{t}%
_{j},\tilde{t}_{j+1}}\right] \right\vert ^{\rho }\right) ^{1/\rho }<\infty
\end{equation*}%
finite. Then, if $\mathcal{H}$ denotes the Cameron-Martin space associated
to $X$, we have the continous embedding 
\begin{equation*}
\mathcal{H\hookrightarrow }\text{ }C^{\rho \text{-var}}\left( \left[ 0,1%
\right] ,\mathbb{R}^{d}\right) .
\end{equation*}%
More precisely, for all $h\in \mathcal{H}$ and all $0\leq s<t\leq 1$, 
\begin{equation*}
\left\vert h\right\vert _{\rho \text{-var,}\left[ s,t\right] }\leq \sqrt{%
\left\langle h,h\right\rangle _{\mathcal{H}}}\sqrt{R_{\rho \text{-var,}\left[
s,t\right] ^{2}}}.
\end{equation*}
\end{theorem}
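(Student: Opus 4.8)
The plan is to exploit the first--chaos (Paley--Wiener) description of the Cameron--Martin space and then convert the $\rho$-variation of $h$ into a double sum of rectangular increments of $R$, which is exactly the quantity controlled by $|R|_{\rho\text{-var},[0,1]^2}$. Recall that $\mathcal{H}$ is linearly isometric to the first Wiener chaos $\mathcal{H}_1$, the $L^2(\mathbb{P})$-closure of the span of the $X_t^k$, via $Z\mapsto h_Z$ with $h_Z^k(t)=\mathbb{E}(X_t^k Z^k)$ and $\langle h_Z,h_Z\rangle_{\mathcal{H}}=\mathbb{E}|Z|^2$ (for independent coordinates this splits over $k$). The consequence I will use repeatedly is that every increment of $h$ is an $L^2(\mathbb{P})$-pairing of the corresponding increment of $X$ against a fixed $Z$, namely $h^k_{u,v}=\mathbb{E}(X^k_{u,v}Z^k)$.

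First I would fix $[s,t]$ and a dissection $s=u_0<\dots<u_n=t$, and linearize the variation sum by $\ell^{\rho}$--$\ell^{\rho'}$ duality (with $1/\rho+1/\rho'=1$, $\rho\geq 1$): choose test vectors $a_i\in\mathbb{R}^d$ with $\sum_i|a_i|^{\rho'}\leq 1$ so that $\big(\sum_i|h_{u_i,u_{i+1}}|^{\rho}\big)^{1/\rho}=\sum_i\langle a_i,h_{u_i,u_{i+1}}\rangle$. Rewriting the right-hand side as $\sum_k\mathbb{E}\big((\sum_i a_i^k X^k_{u_i,u_{i+1}})\,Z^k\big)$ and applying Cauchy--Schwarz in $L^2(\mathbb{P})$, then Cauchy--Schwarz in the index $k$, bounds it by
\begin{equation*}
\Big(\sum_k\big\|\textstyle\sum_i a_i^k X^k_{u_i,u_{i+1}}\big\|_{L^2}^2\Big)^{1/2}\sqrt{\langle h,h\rangle_{\mathcal{H}}}.
\end{equation*}

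The crux is the first factor, which is where the hypothesis on $R$ enters. Expanding the $L^2$-norms and using that each coordinate has covariance $R$ (with vanishing cross terms) gives $\sum_{i,j}\langle a_i,a_j\rangle\,\mathbb{E}(X_{u_i,u_{i+1}}X_{u_j,u_{j+1}})$. Estimating $|\langle a_i,a_j\rangle|\leq|a_i||a_j|$ and applying H\"older over the index pairs yields the bound $\big(\sum_{i,j}(|a_i||a_j|)^{\rho'}\big)^{1/\rho'}\big(\sum_{i,j}|\mathbb{E}(X_{u_i,u_{i+1}}X_{u_j,u_{j+1}})|^{\rho}\big)^{1/\rho}$; here the first factor equals $(\sum_i|a_i|^{\rho'})^{2/\rho'}\leq 1$ and the second is at most $|R|_{\rho\text{-var},[s,t]^2}$ by definition. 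Taking square roots, then the supremum over all dissections of $[s,t]$, produces exactly $|h|_{\rho\text{-var},[s,t]}\leq\sqrt{\langle h,h\rangle_{\mathcal{H}}}\sqrt{|R|_{\rho\text{-var},[s,t]^2}}$, and the continuous embedding is the special case $[s,t]=[0,1]$.

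The main obstacle is conceptual rather than computational: one must resist estimating the increments $|h_{u_i,u_{i+1}}|$ one at a time, since that discards the cancellations encoded in the variation norm and cannot reproduce the covariance $\rho$-variation on the right. The device that makes everything work is the duality step, which tests the \emph{entire} partition simultaneously against $X$ in $L^2(\mathbb{P})$; the key structural observation is then that $\|\sum_i a_i X_{u_i,u_{i+1}}\|_{L^2}^2$ is governed by the two-dimensional $\rho$-variation of the covariance \emph{uniformly in} $Z$. This uniformity is also what makes the argument clean: because $Z$ only enters through a single Cauchy--Schwarz, no approximation of $Z$ by finite combinations of increments of $X$ is required.
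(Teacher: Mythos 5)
The paper itself contains no proof of Theorem \ref{Th2dimVarCMspace}: it is quoted from \cite{friz-victoir-07-DEdrivenGaussian_I}, so the only meaningful comparison is with that reference's argument. Your proof is correct and is in substance that same argument --- represent $h\in\mathcal{H}$ through the first Wiener chaos so that $h_{u,v}=\mathbb{E}\left( X_{u,v}Z\right) $, test the whole dissection at once via $\ell ^{\rho }$--$\ell ^{\rho ^{\prime }}$ duality (the reference realizes the optimal dual vector explicitly, $a_{i}\propto \left\vert h_{u_{i},u_{i+1}}\right\vert ^{\rho -1}\mathrm{sgn}\left( h_{u_{i},u_{i+1}}\right) $, which is the same device), apply one Cauchy--Schwarz in $L^{2}\left( \mathbb{P}\right) $, and then H\"{o}lder over index pairs to turn $\left\Vert \sum_{i}a_{i}X_{u_{i},u_{i+1}}\right\Vert _{L^{2}}^{2}$ into exactly the two-dimensional $\rho $-variation of $R$.
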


We recall that a Lipschitz continuous path in $\mathbb{R}^{d}$, by simple
computation of its area integral, lifts to a path in $G^{2}\left( \mathbb{R}%
^{d}\right) $, the step-$2$ nilpotent group with $d$ generators equipped. We
equip $G^{2}\left( \mathbb{R}^{d}\right) $ with the Carnot-Caratheodory
metric $d$; this is natural, for instance the lifted path is then Lipschitz
with respect to $d$. H\"{o}lder and variation regularity of paths $\left[ 0,1%
\right] \rightarrow \left( G^{2}\left( \mathbb{R}^{d}\right) ,d\right) $ are
then special cases of the general discussion we had previously. We denote
the resulting pathspace "norms" by $\left\Vert \cdot \right\Vert _{\varphi 
\text{-H\"{o}l}}$ and $\left\Vert \cdot \right\Vert _{\psi \text{-var}}~$%
etc.\ to distinguish from the general case.

\begin{theorem}
\label{ThGausstailsXPsvar}Let $X$ be a centered, continuous Gaussian process
in $\mathbb{R}^{d}$ on $\left[ 0,1\right] $ with independent components. If
the covariation of $X$ is of finite $\rho $-variation for $\rho <3/2$ then
there exists a lift to a Gaussian rough path $\mathbf{X}\in C_{0}\left( %
\left[ 0,1\right] ,G^{2}\left( \mathbb{R}^{d}\right) \right) $ of finite
homogenuous $\left( 2\rho +\varepsilon \right) $-variation, $\varepsilon >0$
and $\left\Vert \mathbf{X}\right\Vert _{\psi _{p,2}\text{-var;}\left[ 0,1%
\right] }$ has a Gauss tail for $p=2\rho $.\ More precisely, there exists $%
\eta >0$ such that%
\begin{equation*}
\int \exp \left( \eta \left\Vert \mathbf{X}\left( \omega \right) \right\Vert
_{\psi _{p,2}\text{-var;}\left[ 0,1\right] }^{2}\right) d\mu \left( \omega
\right) <\infty .
\end{equation*}
\end{theorem}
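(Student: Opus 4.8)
The plan is to apply the Generalized Fernique Estimate (Theorem \ref{ThGeneralizedFernique}) to the functional $f(b):=\Vert\mathbf{X}(b)\Vert_{\psi_{p,2}\text{-var};\left[0,1\right]}$ on the abstract Wiener space $\left(\mathbb{B},\mathcal{H},\mu\right)$ carrying $X$, with $p=2\rho$. Here $\mathbf{X}$ is the canonical lift to $G^{2}\left(\mathbb{R}^{d}\right)$, whose existence and finite homogeneous $\left(2\rho+\varepsilon\right)$-variation are provided by the cited work \cite{friz-victoir-07-DEdrivenGaussian_I}; the genuinely new content is the Gauss tail, and for this it suffices to verify the two hypotheses of Theorem \ref{ThGeneralizedFernique}, namely a.s.\ finiteness (\ref{EqfFinite}) and the translation inequality (\ref{Control_H_translate}).

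For (\ref{EqfFinite}) I would first check that $\mathbf{X}$, viewed as a $G^{2}\left(\mathbb{R}^{d}\right)$-valued process under CC metric, satisfies Condition \ref{ConditionExp} with $p=2\rho$. The increment satisfies $d\left(\mathbf{X}_{s},\mathbf{X}_{t}\right)\asymp\left\vert X_{s,t}\right\vert+\left\vert A_{s,t}\right\vert^{1/2}$, with first level in the first Wiener chaos and area $A_{s,t}$ in the second. Finite $\rho$-variation of $R$ gives $\mathbb{E}\left\vert X_{s,t}\right\vert^{2}\lesssim\left\vert t-s\right\vert^{1/\rho}$ and, via the second-chaos area bound, $\mathbb{E}\left\vert A_{s,t}\right\vert\lesssim\left\vert t-s\right\vert^{1/\rho}$. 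The decisive point -- and precisely why the square root appears in $\psi\circ\sqrt{\cdot}$ -- is that taking $\left\vert A_{s,t}\right\vert^{1/2}$ halves the linear $q$-growth of second-chaos moments, so that by hypercontractivity $\Vert\left\vert A_{s,t}\right\vert^{1/2}\Vert_{L^{2q}}\lesssim\sqrt{q}\,\left\vert t-s\right\vert^{1/p}$, matching the $\sqrt{q}$-growth of the Gaussian first level. The equivalent moment formulation of Condition \ref{ConditionExp} (the Lemma following it) then yields Condition \ref{ConditionExp} for $\mathbf{X}$, and Theorem \ref{TaylorVar} gives $f(b)<\infty$ off a $\mu$-null set.

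The heart of the argument is the translation inequality (\ref{Control_H_translate}). Shifting $b\mapsto b-h$ translates the path by the Cameron-Martin path $\hat{h}$, so that $\mathbf{X}(b)=T_{\hat{h}}\mathbf{X}(b-h)$, the rough-path translation by $\hat{h}$. This is exactly where $\rho<3/2$ enters: $\hat{h}$ has finite $\rho$-variation, the first level of $X$ has finite $2\rho$-variation, and the cross integrals $\int\hat{h}\otimes dX$ and $\int X\otimes d\hat{h}$ that define $T_{\hat{h}}$ exist as Young integrals precisely because $1/\rho+1/\left(2\rho\right)=3/\left(2\rho\right)>1$. From subadditivity of the homogeneous CC norm under the group product together with Young bounds on the mixed area I would derive the pointwise increment estimate $d\left(\mathbf{X}_{s}(b),\mathbf{X}_{t}(b)\right)\leq c\left(d\left(\mathbf{X}_{s}(b-h),\mathbf{X}_{t}(b-h)\right)+\left\vert\hat{h}\right\vert_{\rho\text{-var};\left[s,t\right]}\right)$, and promote it (using $p\geq1$ and the scaling of $\psi_{p,2}$) to the norm bound $\Vert\mathbf{X}(b)\Vert_{\psi_{p,2}\text{-var}}\leq c\left(\Vert\mathbf{X}(b-h)\Vert_{\psi_{p,2}\text{-var}}+\left\vert\hat{h}\right\vert_{\rho\text{-var};\left[0,1\right]}\right)$. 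Theorem \ref{Th2dimVarCMspace} then controls the last term by $\left\vert\hat{h}\right\vert_{\rho\text{-var};\left[0,1\right]}\leq\sqrt{R_{\rho\text{-var};\left[0,1\right]^{2}}}\,\left\vert h\right\vert_{\mathcal{H}}$, and absorbing $\sqrt{R_{\rho\text{-var}}}$ and $\sigma$ into the constant $c$ produces exactly (\ref{Control_H_translate}).

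With both hypotheses established, Theorem \ref{ThGeneralizedFernique} yields $\int\exp\left(\eta f(b)^{2}\right)\mathrm{d}\mu<\infty$ for $\eta<1/\left(2c^{2}\sigma^{2}\right)$, which is the claimed Gauss tail. The main obstacle is the translation inequality: because the homogeneous norm is genuinely non-linear (path level \emph{and} square-root area, cf.\ the cancellation in (\ref{AreaNotLinear})), a naive linear bound fails, and one must track the mixed-area Young integrals carefully and confirm that their square roots are dominated by $d\left(\mathbf{X}_{s}(b-h),\mathbf{X}_{t}(b-h)\right)$ together with $\left\vert\hat{h}\right\vert_{\rho\text{-var}}$ at the level of $\psi_{p,2}$-variation sums -- precisely the non-linear feature that, as stressed in the introduction, Wiener-It\^{o} chaos bounds cannot resolve.
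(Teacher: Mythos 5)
Your skeleton coincides with the paper's own proof: apply the generalized Fernique estimate (Theorem \ref{ThGeneralizedFernique}) to $f=\left\Vert \cdot \right\Vert _{\psi _{p,2}\text{-var;}\left[ 0,1\right] }\circ S_{2}$, obtain a.s.\ finiteness (\ref{EqfFinite}) via Condition \ref{ConditionExp} and Theorem \ref{TaylorVar}, and obtain the translation hypothesis (\ref{Control_H_translate}) from a rough-path translation estimate combined with the Cameron--Martin embedding of Theorem \ref{Th2dimVarCMspace}. But two of your intermediate claims fail as stated. The lesser one: finite $\rho $-variation of $R$ does \emph{not} give $\mathbb{E}\left\vert X_{s,t}\right\vert ^{2}\lesssim \left\vert t-s\right\vert ^{1/\rho }$; it only bounds $\mathbb{E}\left\vert X_{s,t}\right\vert ^{2}$ by a control function of $\left( s,t\right) $, which can concentrate on short time intervals. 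The paper repairs this by reparametrizing time through the inverse of $t\mapsto \left\vert R\right\vert _{\rho \text{-var;}\left[ 0,t\right] ^{2}}^{\rho }$, applying the cited moment bound and Theorem \ref{TaylorVar} to the reparametrized process $\mathbf{Z}$, and then invoking invariance of generalized variation under reparametrization; your argument needs the same detour.

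The genuine gap is your pointwise increment estimate $d\left( \mathbf{X}_{s}\left( b\right) ,\mathbf{X}_{t}\left( b\right) \right) \leq c\left( d\left( \mathbf{X}_{s}\left( b-h\right) ,\mathbf{X}_{t}\left( b-h\right) \right) +\left\vert \hat{h}\right\vert _{\rho \text{-var;}\left[ s,t\right] }\right) $: it is false, and no constant $c$ can make it true. Young's inequality bounds the cross integral by $\left\vert \int_{s}^{t}y_{s,r}\otimes \mathrm{d}\hat{h}_{r}\right\vert \lesssim \left\vert y\right\vert _{p\text{-var;}\left[ s,t\right] }\left\vert \hat{h}\right\vert _{\rho \text{-var;}\left[ s,t\right] }$, and the \emph{local} $p$-variation of $y$ over $\left[ s,t\right] $ cannot be replaced by the single increment: take $y$ to traverse a straight segment of length $L$ and return along it during $\left[ s,t\right] $ while $\hat{h}$ moves linearly in an orthogonal direction; then $\left\Vert \mathbf{y}_{s,t}\right\Vert =0$ yet the cross integral is of order $L\left( t-s\right) $, unbounded in $L$. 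This is exactly the non-linearity obstacle the introduction warns about, and the paper's resolution (Theorem \ref{ThTranslationPhiVarAndRhoVar}) never passes through a pointwise bound. Instead it works directly with variation functionals and control functions: one shows $\psi \left( \left\vert y\right\vert _{p\text{-var;}\left[ s,t\right] }\right) \leq V_{\psi ;\left[ s,t\right] }\left( y\right) $ (this uses convexity of $x\mapsto \left( \psi ^{-1}\left( x\right) \right) ^{p}$ near $0$, which makes $\left[ \psi ^{-1}\left( V_{\psi }\right) \right] ^{p}$ a control), deduces the estimate (\ref{estimate2ndlevel}), namely $\psi \left( \sqrt{\left\vert \int_{s}^{t}y_{s,r}\otimes \mathrm{d}h_{r}\right\vert }\right) \leq V_{\psi ;\left[ s,t\right] }\left( y\right) +\psi \left( \left\vert h\right\vert _{\rho \text{-var;}\left[ s,t\right] }\right) $, and then exploits that \emph{both} terms on the right are superadditive in $\left( s,t\right) $ (the second by convexity of $x\mapsto \psi \left( x^{1/\rho }\right) $, your assumption that never appears in the sketch) to sum over the intervals of an arbitrary dissection; a final homogenization argument, (\ref{EqInfimum}), converts the resulting $V_{\psi }$-inequality into the "norm" inequality (\ref{cFromTranslate}). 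Your destination --- the bound $\left\Vert \mathbf{X}\left( b\right) \right\Vert _{\psi _{p,2}\text{-var}}\leq c\left( \left\Vert \mathbf{X}\left( b-h\right) \right\Vert _{\psi _{p,2}\text{-var}}+\left\vert \hat{h}\right\vert _{\rho \text{-var}}\right) $, then $\left\vert \hat{h}\right\vert _{\rho \text{-var}}\lesssim \left\vert h\right\vert _{\mathcal{H}}$ by Theorem \ref{Th2dimVarCMspace} --- is correct, but the route you propose to it collapses at this step; what you call a "promotion" is in fact the entire content of Theorem \ref{ThTranslationPhiVarAndRhoVar}.
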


\begin{proof}
The first part of the statement (existence of lift) is proven in \cite%
{friz-victoir-07-DEdrivenGaussian_I}. To show the Gauss tail we apply
Theorem \ref{ThGeneralizedFernique} with $f=\left\Vert \cdot \right\Vert
_{\psi _{p,2}\text{-var;}\left[ 0,1\right] }\circ S_{2}$ where $%
S_{2}:X\left( \omega \right) =\omega \mapsto \mathbf{X}\left( \omega \right) 
$ denotes the lift constructed in \cite{friz-victoir-07-DEdrivenGaussian_I}%
\footnote{%
A measurable map from%
\begin{equation*}
C_{0}\left( \left[ 0,1\right] ,\mathbb{R}^{d}\right) \rightarrow C_{0}\left( %
\left[ 0,1\right] ,G^{2}\left( \mathbb{R}^{d}\right) \right) .
\end{equation*}%
}, setting $p=2\rho $ we need to check (i) $\left\Vert \mathbf{X}\right\Vert
_{\psi _{p,2}\text{-var;}\left[ 0,1\right] }<\infty $ a.s.\ and (ii)%
\begin{equation}
\left\Vert \mathbf{X}\right\Vert _{\psi _{p,2}\text{-var;}\left[ 0,1\right]
}\leq c\left( \left\Vert T_{-h}\mathbf{X}\right\Vert _{\psi _{p,2}\text{-var;%
}\left[ 0,1\right] }+\sigma \left\vert h\right\vert _{\mathcal{H}}\right) .
\label{cFromTranslate}
\end{equation}%
Ad (i): We reparametrize\ (using the inverse of $t\mapsto \left\vert
R\right\vert _{\rho \text{-var;}\left[ 0,t\right] ^{2}}^{\rho }$) and obtain
a continuous $G^{2}\left( \mathbb{R}^{d}\right) $-valued process $\mathbf{Z}$
which satisfies (\cite[Thm 35, equation (16)]%
{friz-victoir-07-DEdrivenGaussian_I}%
\begin{equation*}
\left\vert \left\Vert \mathbf{Z}_{s,t}\right\Vert \right\vert _{L^{q}\left( 
\mathbb{P}\right) }\leq C\sqrt{q}\left\vert t-s\right\vert ^{\frac{1}{2\rho }%
}.
\end{equation*}%
It now follows from theorem \ref{TaylorVar} that $\mathbf{Z}$ has a.s.\
finite $\psi _{p,2}$-variation for $p=2\rho $ and thanks to invariance of
generalized variation the same is true for $\mathbf{X}$.

Ad (ii). This is precisely theorem \ref{ThTranslationPhiVarAndRhoVar} below.
We see in particular that any $\eta <1/\left( 2c^{2}\sigma ^{2}\right) $
where $\sigma $ was defined in (\ref{DefSigmaAbstractWienerSpace}) and $c$
is the constant which appears in (\ref{cFromTranslate}) below. This finishes
the proof.
\end{proof}

\begin{remark}
This theorem applies in particular to Brownian motion and fractional
Brownian motion and their enhancement to rough paths (e.g.\ \cite%
{coutin-qian-02}), at least for Hurst parameter $H>1/3$. (A Besov-variation
embedding theorem \cite{friz-victoir-05-JFA} shows that $H>1/4$ is
possible.) Using the well-known results of Taylor \cite{Taylor72},
Kawada-Kono \cite{KawadaKono73} combined with the trivial $\left\vert
X\right\vert _{\psi _{p,2}\text{-var}}\leq \left\Vert \mathbf{X}\right\Vert
_{\psi _{p,2}\text{-var}}$ we see from these examples that one cannot hope
for stronger statements of this type.
\end{remark}

\section{Applications\label{sectionApplications}}

\subsection{Regularity and Integrability of L\'{e}vy's Area}

We now focus on the L\'{e}vy area $A$ of $d$-dimensional Brownian motion $B$%
, defined by 
\begin{equation*}
A_{s,t}=\frac{1}{2}\left( \int_{s}^{t}\left( B_{u}-B_{s}\right) \otimes 
\mathrm{d}B_{u}-\int_{s}^{t}\left( B_{u}-B_{s}\right) \otimes \mathrm{d}%
B_{u}\right) .
\end{equation*}%
We already know that the Brownian motion enhanced with L\'{e}vy's area gives
rise to a rough path of $\psi _{2,2}$-variation and the optimality of this
is a consequence of the optimality of $\psi _{2,2}$-variation of the
underlying Brownian motion alone. We now show that implicit variation
regularity of L\'{e}vy's area is optimal in its own right.\textit{\ }

\begin{proposition}[LIL for the Area process]
There exists a positive constant $C$, such that 
\begin{equation*}
\limsup_{h\rightarrow 0}\frac{\sqrt{\left\vert A_{0,h}\right\vert }}{\varphi
_{2,2}\left( h\right) }\geq C\text{ a.s.}
\end{equation*}%
As a consequence, for every fixed $t\in \lbrack 0,1)$, 
\begin{equation}
\limsup_{h\rightarrow 0}\frac{\sqrt{\left\vert A_{t,t+h}\right\vert }}{%
\varphi _{2,2}\left( h\right) }\geq C\text{ a.s.}  \label{CorLILAreaNew}
\end{equation}
\end{proposition}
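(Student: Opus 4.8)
The plan is to reduce the second assertion (\ref{CorLILAreaNew}) to the first and then to prove the first by a second--Borel--Cantelli argument run along a geometric sequence of scales. For fixed $t$ the shifted process $\tilde{B}_{s}:=B_{t+s}-B_{t}$ is again a standard Brownian motion and, by the very definition of the area, $A_{t,t+h}$ coincides with the area $\tilde{A}_{0,h}$ of $\tilde{B}$; hence (\ref{CorLILAreaNew}) is exactly the first statement applied to $\tilde{B}$. So it suffices to establish
\[
\limsup_{h\rightarrow 0}\frac{\left\vert A_{0,h}\right\vert }{h\log \log (1/h)}\geq C^{2}\quad \text{a.s.,}
\]
which, since $\varphi _{2,2}(h)^{2}=h\log \log (1/h)$ near $0+$, is equivalent to the claim. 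As $\left\vert A_{0,h}\right\vert $ dominates the modulus of any single entry, it is enough to prove this lower bound for the scalar martingale $a_{h}:=A_{0,h}^{12}$.

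Next I would fix a ratio $\rho \in (0,1)$, set $h_{n}:=\rho ^{n}$, and use that the area increments $A_{h_{n+1},h_{n}}^{12}$ depend only on the Brownian increments over the pairwise disjoint intervals $[h_{n+1},h_{n}]$ and are therefore independent. By Brownian scaling $A_{h_{n+1},h_{n}}^{12}\overset{d}{=}(1-\rho )h_{n}\,A_{0,1}^{12}$, and Lévy's classical computation of the characteristic function $1/\cosh (\xi /2)$ of $A_{0,1}^{12}$ gives the exponential lower tail $\mathbb{P}\left( \left\vert A_{0,1}^{12}\right\vert \geq x\right) \geq c_{0}e^{-c_{1}x}$ for large $x$. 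Setting
\[
E_{n}:=\left\{ \left\vert A_{h_{n+1},h_{n}}^{12}\right\vert \geq K\,h_{n}\log \log (1/h_{n})\right\} ,
\]
scaling turns $E_{n}$ into $\{\left\vert A_{0,1}^{12}\right\vert \geq \tfrac{K}{1-\rho }\log \log (1/h_{n})\}$; since $\log \log (1/h_{n})\sim \log n$, one gets $\mathbb{P}(E_{n})\gtrsim n^{-c_{1}K/(1-\rho )}$. Choosing $K$ small enough that this exponent is $\leq 1$ makes $\sum_{n}\mathbb{P}(E_{n})=\infty $, and the independence of the $E_{n}$ together with the second Borel--Cantelli lemma yields that $E_{n}$ occurs for infinitely many $n$, almost surely.

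It remains to transfer the lower bound from the increment $A_{h_{n+1},h_{n}}^{12}$ to the anchored quantity $A_{0,h_{n}}$, and here the cancellation recorded in (\ref{AreaNotLinear}) enters. That decomposition gives $A_{0,h_{n}}=A_{0,h_{n+1}}+A_{h_{n+1},h_{n}}+\tfrac{1}{2}(B_{h_{n+1}}\wedge B_{h_{n+1},h_{n}})$, so by the reverse triangle inequality
\[
\left\vert A_{0,h_{n}}\right\vert \geq \left\vert A_{h_{n+1},h_{n}}\right\vert -\left\vert A_{0,h_{n+1}}\right\vert -\tfrac{1}{2}\left\vert B_{h_{n+1}}\right\vert \left\vert B_{h_{n+1},h_{n}}\right\vert .
\]
To bound the two error terms I would invoke Proposition \ref{LIL}, applied to the enhanced process $\mathbf{X}=S_{2}(B)$, which satisfies Condition \ref{ConditionExp} with $p=2$ (cf. Theorem \ref{ThGausstailsXPsvar}): since the Carnot--Caratheodory distance dominates both the level-one increment and the square root of the area, the proposition furnishes, for all small $h$, $\left\vert B_{0,h}\right\vert ,\left\vert A_{0,h}\right\vert ^{1/2}\leq C^{\prime }\varphi _{2,2}(h)$, and likewise $\left\vert B_{h_{n+1},h_{n}}\right\vert \leq \left\vert \mathbf{X}\right\vert _{0;[0,h_{n}]}\leq C^{\prime }\varphi _{2,2}(h_{n})$. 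With $h_{n+1}=\rho h_{n}$ and $\log \log (1/h_{n+1})\sim \log \log (1/h_{n})$ this makes both error terms $\leq C^{\prime \prime }\sqrt{\rho }\,h_{n}\log \log (1/h_{n})$ for all large $n$. Fixing $K$ first (for the divergence above) and then $\rho $ small enough that $C^{\prime \prime }\sqrt{\rho }\leq K/2$, on the infinitely many $n$ with $E_{n}$ one obtains $\left\vert A_{0,h_{n}}\right\vert \geq \tfrac{K}{2}h_{n}\log \log (1/h_{n})$, which gives the limsup bound with $C^{2}=K/2$.

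The main obstacle is precisely this transfer step: the events controlled by independence are the area increments over disjoint blocks, whereas the statement concerns the area anchored at the origin, and the two differ by the non-linear correction in (\ref{AreaNotLinear}). The argument hinges on the fact that all correction terms live at the strictly smaller scale $h_{n+1}=\rho h_{n}$, or are products of level-one increments of size $\sqrt{h\log \log }$, so that they are beaten by the genuinely second-order term $h_{n}\log \log (1/h_{n})$ once $\rho $ is small; what must be checked is the compatibility of the two smallness requirements (on $K$ for divergence, on $\rho $ for error control), and this holds because $K$ and $\rho $ may be chosen independently. The other essential input is the sharp exponential tail of Lévy's area, supplied by the explicit $1/\cosh $ law.
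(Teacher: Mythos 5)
Your proof is correct, but it takes a genuinely different route from the paper's. The paper disposes of the first assertion by citation --- Neuenschwander's Theorem 2.15 \cite{Neuenschwander96:ProbaonHeisenbergGroup} --- and obtains (\ref{CorLILAreaNew}) exactly as you do, from $A_{0,\cdot }\overset{law}{=}A_{t,t+\cdot }$ (equivalently, from the fact that $A_{t,t+\cdot}$ is the area of the shifted Brownian motion). What you supply instead is a self-contained lower-bound LIL of classical type: the area increments $A_{h_{n+1},h_{n}}^{12}$ over the geometric blocks $\left[ h_{n+1},h_{n}\right] $ are independent, Brownian scaling plus the exponential lower tail coming from L\'{e}vy's $1/\cosh \left( \xi /2\right) $ law give $\mathbb{P}\left( E_{n}\right) \gtrsim n^{-c_{1}K/(1-\rho )}$, and the second Borel--Cantelli lemma yields infinitely many large block increments; then the decomposition (\ref{AreaNotLinear}), combined with the paper's own upper LIL (Proposition \ref{LIL}, legitimately applied to enhanced Brownian motion, which satisfies Condition \ref{ConditionExp} with $p=2$), shows that the anchoring error $\left\vert A_{0,h_{n+1}}\right\vert $ and the cross term are $O\left( \sqrt{\rho }\,h_{n}\log \log \left( 1/h_{n}\right) \right) $ and hence absorbed once $\rho $ is small. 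Your constant bookkeeping is consistent: the divergence requirement $c_{1}K/\left( 1-\rho \right) \leq 1$ only becomes easier as $\rho \downarrow 0$, so one may fix $K<1/c_{1}$ and then shrink $\rho $ (as written you should ask $C^{\prime \prime }\sqrt{\rho }\leq K/2$ for the \emph{sum} of the two error terms, a cosmetic adjustment). What your route buys is independence from the external reference --- every ingredient is either classical or already proved in the paper --- and it exhibits precisely how the anchored area inherits the lower bound from the block increments despite the non-linear correction in (\ref{AreaNotLinear}); what the paper's citation buys is brevity, and Neuenschwander's statement is a sharper limit theorem (with an identified constant) than the one-sided bound with unspecified $C$ needed here.
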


\begin{proof}
See \cite[Theorem $2.15$]{Neuenschwander96:ProbaonHeisenbergGroup}. The
consequence follows from $A_{0,\cdot }\overset{law}{=}A_{t,t+\cdot }$ for
fixed $t$.
\end{proof}

\begin{theorem}
\label{LowerBoundLevyAreaPsiVar}There exists a positive constant $C$ such
that 
\begin{equation*}
\lim_{\delta \downarrow 0}\sup_{\left\vert D\right\vert <\delta
}\sum_{i:t_{i}\in D}\psi _{2,2}\left( \left\vert
A_{t_{i},t_{i+1}}\right\vert ^{1/2}\right) \geq C\text{ a.s.}
\end{equation*}
\end{theorem}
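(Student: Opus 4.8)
The plan is to convert the pointwise lower law of the iterated logarithm (\ref{CorLILAreaNew}) into a lower bound on the $\psi_{2,2}$-variation by a Vitali covering argument. Since $\sup_{|D|<\delta}\sum_{i}\psi_{2,2}(|A_{t_{i},t_{i+1}}|^{1/2})$ is nondecreasing in $\delta$ and the limit is its infimum over $\delta>0$, it suffices to exhibit a constant $C_{0}>0$ such that for each fixed small $\delta$ one has $\sup_{|D|<\delta}\sum_{i}\psi_{2,2}(|A_{t_{i},t_{i+1}}|^{1/2})\geq C_{0}$ almost surely; intersecting the countably many almost sure events along a sequence $\delta_{k}\downarrow 0$ then gives the claim. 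The analytic input is a gauge estimate: because $\psi_{2,2}(\varphi_{2,2}(s))\sim s$ as $s\to 0$ (the remark following Definition \ref{DefPsiAndPhi}) and $\psi_{2,2}(\kappa y)/\psi_{2,2}(y)\to \kappa^{2}$ as $y\to 0$, there are $c'>0$ and $\delta_{0}>0$ such that, writing $A_{I}=A_{s,t}$ for $I=[s,t]$,
\begin{equation*}
\sqrt{|A_{s,t}|}\geq \tfrac{C}{2}\varphi_{2,2}(t-s)\ \text{ and }\ t-s<\delta_{0}\ \Longrightarrow\ \psi_{2,2}\big(|A_{s,t}|^{1/2}\big)\geq c'\,(t-s),
\end{equation*}
where $C$ is the LIL constant of (\ref{CorLILAreaNew}); call such an interval \emph{good}.

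The probabilistic input is that good intervals are abundant near almost every point. For fixed $t$, (\ref{CorLILAreaNew}) yields a sequence $h_{m}\downarrow 0$ along which $\sqrt{|A_{t,t+h_{m}}|}\geq \frac{C}{2}\varphi_{2,2}(h_{m})$; since this holds almost surely for each fixed $t$, joint measurability of the area increments together with Fubini's theorem upgrades it to the statement: almost surely, for Lebesgue-almost every $t\in[0,1)$ there are arbitrarily small good intervals of the form $[t,t+h]$. Let $E=E(\omega)\subset[0,1)$ be this full-measure set. For $\delta<\delta_{0}$ the family $\mathcal{V}$ of all good intervals $[t,t+h]$ with $t\in E$ and $h<\delta$ is then a fine (Vitali) cover of $E$, each $t\in E$ being the left endpoint of members of $\mathcal{V}$ of arbitrarily small length. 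By the Vitali covering theorem we extract a finite pairwise disjoint subfamily $[t_{1},t_{1}+h_{1}],\dots,[t_{J},t_{J}+h_{J}]$ with $\sum_{j=1}^{J}h_{j}\geq \frac12$. Inserting these $2J$ endpoints together with an arbitrary subdivision of the complementary gaps into pieces of length $<\delta$ produces a partition $D$ with $|D|<\delta$, and since each listed interval is good and all other terms are nonnegative,
\begin{equation*}
\sum_{I\in D}\psi_{2,2}\big(|A_{I}|^{1/2}\big)\ \geq\ \sum_{j=1}^{J}\psi_{2,2}\big(|A_{t_{j},t_{j}+h_{j}}|^{1/2}\big)\ \geq\ c'\sum_{j=1}^{J}h_{j}\ \geq\ \frac{c'}{2}=:C_{0},
\end{equation*}
which is the required almost sure lower bound for fixed $\delta$.

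The main obstacle is the step upgrading the single-point law (\ref{CorLILAreaNew}) to an almost-everywhere-in-$t$ statement: it needs a genuine joint-measurability argument for the area process before Fubini applies, and then the verification that the resulting good intervals form a fine cover anchored at a full-measure set of left endpoints, which is exactly what makes the Vitali extraction legitimate. The remaining ingredients are routine, the gauge asymptotics being elementary and the passage from the disjoint good family to an admissible partition of mesh $<\delta$ only adding nonnegative terms. An alternative route, avoiding Vitali, would replay Taylor's dyadic scheme, counting level by level (via Borel--Cantelli on the independent, scaling increments $A_{(i-1)2^{-n},i2^{-n}}\overset{\mathrm{law}}{=}2^{-n}A_{0,1}$) the dyadic intervals on which $|A_{0,1}|$ exceeds a threshold of order $\log\log(1/h_{n})$; but this must confront the correlation between a dyadic interval's area and the selection already made at coarser scales, a difficulty the Vitali argument sidesteps entirely.
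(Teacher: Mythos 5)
Your proposal is correct and takes essentially the same route as the paper's own proof: the pointwise LIL lower bound (\ref{CorLILAreaNew}) is upgraded via joint measurability and Fubini to an almost-sure, Lebesgue-almost-every-$t$ statement, a Vitali covering argument extracts a finite disjoint family of good intervals of total length bounded below, and these are completed to a dissection of mesh $<\delta$ whose $\psi_{2,2}$-sum is bounded below by a positive constant. The only cosmetic difference is the bookkeeping of constants: the paper runs the argument for $c^{2}A$ and removes the factor at the very end via the property $\psi_{2,2}\left( cs\right) \leq \Delta _{c}\psi_{2,2}\left( s\right) $, whereas you absorb the LIL constant directly through the asymptotic homogeneity $\psi_{2,2}\left( \kappa y\right) \sim \kappa ^{2}\psi_{2,2}\left( y\right) $ as $y\rightarrow 0$.
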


\begin{proof}
For brevity, we set $\psi :=\psi _{2,2}$. It is convenient to define $%
V_{\psi \circ \sqrt{.}}^{D}\left( A\right) :=\sum_{i:t_{i}\in D}\psi \circ 
\sqrt{.}\left( \left\vert A_{t_{i},t_{i+1}}\right\vert \right) $. Then the
proof works along the lines of \cite{Taylor72}. We fix $\varepsilon >0$ and
for every $\delta >0$ define%
\begin{equation*}
E_{\delta }:=\left\{ t\in \left( 0,1\right) :\psi \left( \left\vert
c^{2}A_{t,t+h}\right\vert ^{1/2}\right) >\left( 1-\varepsilon \right) h\text{
for some }h\in \left( 0,\delta \right) \right\} .
\end{equation*}%
where $c$ is the constant which appears in (\ref{CorLILAreaNew}). For fixed $%
t$ and $\delta $, $\mathbb{P}\left( t\in E_{\delta }\right) =1$ and by
Fubini $\mathbb{P}\left( \left\vert E_{\delta }\right\vert =1\right) =1$.
Now setting $E:=\cap _{\delta >0}E_{\delta }=\cap _{n\geq 1}E_{\frac{1}{n}}$
gives%
\begin{equation*}
\mathbb{P}\left( \left\vert E\right\vert =1\right) =1\text{.}
\end{equation*}%
This says that for every $t\in E$ there exists an arbitrary small $h>0$,
such that%
\begin{equation*}
\psi \left( \left\vert c^{2}A_{t,t+h}\right\vert ^{1/2}\right) >\left(
1-\varepsilon \right) h
\end{equation*}%
and these intervals of the form $\left[ t,t+h\right] $ form a Vitali
covering of $\left[ 0,1\right] .$ Hence we can pick a finite disjoint union
of such sets, each of length less than $\delta $ but of total length of at
least $1-\varepsilon .$ Now let $D$ be a dissection with mesh $\left\vert
D\right\vert <\delta $ which includes the above collection as subintervals.
Then%
\begin{eqnarray*}
V_{\psi \circ \sqrt{.}}^{D}\left( c^{2}A\right) &\geq &\sum_{i:t_{i}\in
D}\psi \left( \left\vert c^{2}A_{t_{i},t_{i+1}}\right\vert ^{1/2}\right) \\
&\geq &\sum_{i:t_{i}\text{ is an element of the subcovering}}\psi \left(
\left\vert c^{2}A_{t_{i},t_{i+1}}\right\vert ^{1/2}\right) \\
&\geq &\left( 1-\varepsilon \right) \sum_{i:t_{i}\text{ is an element of the
subcovering}}h_{i} \\
&\geq &\left( 1-\varepsilon \right) ^{2}
\end{eqnarray*}%
Now we see for each $\varepsilon >0$ and $\delta >0$%
\begin{equation*}
P\left( \sup_{\left\vert D\right\vert <\delta }V_{\psi \circ \sqrt{.}%
}^{D}\left( c^{2}A\right) >\left( 1-\varepsilon \right) ^{2}\right) =1
\end{equation*}%
Letting $\varepsilon \rightarrow 0$ and then $\delta \rightarrow 0$ (through
a countable sequence) gives%
\begin{equation*}
\mathbb{P}\left( \lim_{\delta \downarrow 0}\sup_{\left\vert D\right\vert
<\delta }V_{\psi \circ \sqrt{.}}^{D}\left( c^{2}A\right) \geq 1\right) =1.
\end{equation*}%
It is elementary to check from the definition of $\psi $ that for all $c\geq
0$ there exists a $\Delta _{c}$ so that%
\begin{equation*}
\forall s\geq 0:\psi \left( cs\right) \leq \Delta _{c}\psi \left( s\right)
\end{equation*}%
This readily implies that%
\begin{equation*}
\mathbb{P}\left( \lim_{\delta \downarrow 0}\sup_{\left\vert D\right\vert
<\delta }V_{\psi \circ \sqrt{.}}^{D}\left( A\right) \geq \frac{1}{\Delta _{c}%
}\right) =1
\end{equation*}%
and the proof is finished.
\end{proof}

As a corollary of this we can now give the proof of the regularity and
integrability of L\'{e}vy's area as stated in theorem \ref{ThmLevyArea} in
the introduction.

\begin{proof}
(Theorem \ref{ThmLevyArea}) The Gauss tail of $\left\vert A\right\vert
_{\psi \text{-var;}\left[ 0,1\right] }$ is an obvious consequence of our
general theorem \ref{ThGausstailsXPsvar} applied to $d$-dimensional Brownian
motion. Too see optimality take a $\tilde{\psi}$ such that%
\begin{equation}
\lim_{x\rightarrow 0}\frac{\tilde{\psi}\left( x\right) }{\psi _{p,2}\left(
x\right) }=\infty .  \label{EqstrongerPsiNorm}
\end{equation}%
Fix a dissection $D=\left( t_{i}\right) \subset \left[ 0,1\right] $ and
write $a_{i}=\left\vert A_{t_{i},t_{i+1}}\right\vert ^{1/2}$ for brevity.
Then 
\begin{equation*}
\sum_{i}\psi _{p,2}\left( a_{i}\right) =\sum_{i}\tilde{\psi}\left(
a_{i}\right) \frac{\psi _{p,2}\left( a_{i}\right) }{\tilde{\psi}\left(
a_{i}\right) }\leq \left( \sup_{i}\frac{\psi _{p,2}\left( a_{i}\right) }{%
\tilde{\psi}\left( a_{i}\right) }\right) \sum_{i}\tilde{\psi}\left(
a_{i}\right) .
\end{equation*}%
Thanks to the previous theorem, taking $\lim_{\delta \downarrow
0}\sup_{\left\vert D\right\vert \leq \delta }$ leaves the left-hand-side
above strictly positive. By a.s.\ (uniform) continuity of $\left( s,t\right)
\mapsto A_{s,t}$ we see that $a_{i}$ will be arbitrarily small as $%
\left\vert D\right\vert \downarrow 0$, uniformly over all $i$. Thus, $%
\sup_{i}\psi _{p,2}\left( a_{i}\right) /\tilde{\psi}\left( a_{i}\right)
\rightarrow 0$ with $\left\vert D\right\vert \downarrow 0$ and so we must
indeed have that 
\begin{equation*}
\lim_{\delta \downarrow 0}\sup_{\left\vert D\right\vert \leq \delta }\sum_{i}%
\tilde{\psi}\left( a_{i}\right) =+\infty \text{ a.s.}
\end{equation*}%
This finishes the proof of Theorem \ref{ThmLevyArea}.
\end{proof}

\subsection{Regularity of Iterated Integrals, Stochastic Integrals and
Solutions to stochastic differential equations in $\protect\psi $-variation 
\label{SubSectionroughpathinpsivariation}}

We now show that typical rough path estimates remain valid in $\psi $%
-variation. $G^{N}\left( \mathbb{R}^{d}\right) $ denotes the free step-$N$
nilpotent group with $d$-generators and is the natural state space for rough
paths. Equipped with\ Carnot-Caratheodory metric, it forms a metric space
and the concept of $\psi $-variation is immediately meaningful. One of the
basic theorems in rough path theory says that a continuous $G^{\left[ p%
\right] }\left( \mathbb{R}^{d}\right) $-valued path $\mathbf{x}\left( \cdot
\right) $ of finite $p$-variation lifts uniquely to a $G^{N}\left( \mathbb{R}%
^{d}\right) $-valued path, say $S_{N}\left( \mathbf{x}\right) $, also of
finite $p$-variation, for any $N\geq \left[ p\right] $ and for all $\left[
s,t\right] \subset \left[ 0,1\right] $%
\begin{equation}
\left\Vert S_{N}\left( \mathbf{x}\right) _{s,t}\right\Vert \leq \left\Vert
S_{N}\left( \mathbf{x}\right) \right\Vert _{p\text{-var;}\left[ s,t\right]
}\leq C\left( N,p\right) \left\Vert \mathbf{x}\right\Vert _{p\text{-var;}%
\left[ s,t\right] }.  \label{TerryLiftingEstimatePvar}
\end{equation}%
We now show how to extend this to $\psi $-variation. The proof is a based on
the clever concept of control functions introduced in the work of Lyons.
These are continous functions $\omega :\left\{ \left( s,t\right) :0\leq
s\leq t\leq 1\right\} \rightarrow \lbrack 0,\infty )$, zero on the diagonal,
and super-additive in the sense that $\omega \left( s,t\right) +\omega
\left( t,u\right) \leq \omega \left( s,u\right) $ for all $0\leq s\leq t\leq
u\leq 1$. The point is that\footnote{%
The concept makes sense for any continuous path with values in a metric
space.}%
\begin{equation*}
d\left( x_{s,}x_{t}\right) ^{p}\leq \omega \left( s,t\right)
\Longleftrightarrow \left\vert x\right\vert _{p\text{-var;}\left[ s,t\right]
}^{p}\leq \omega \left( s,t\right) .
\end{equation*}%
Examples of control functions to have in mind are $\left\vert t-s\right\vert
^{\theta }$ for $\theta \geq 1$ and $\left\vert x\right\vert _{p\text{-var;}%
\left[ s,t\right] }^{p}$ as well as $V_{\psi ;\left[ s,t\right] }\left(
x\right) $.

\begin{theorem}
Let $\mathbf{x}\in C^{\psi \text{-var}}\left( \left[ 0,1\right] ,G^{\left[ p%
\right] }\left( \mathbb{R}^{d}\right) \right) $ and $\psi :\left[ 0,\infty
\right) \rightarrow \left[ 0,\infty \right) ,$ $\psi \left( 0\right) =0$,
continuous, strictly increasing and onto. Assume that $\psi ^{-1}\left(
\cdot \right) ^{p}$ is convex. Then, for $N\geq \left[ p\right] $ with $%
C=C\left( N,p\right) $,%
\begin{equation*}
\left\Vert S_{N}\left( \mathbf{x}\right) \right\Vert _{\psi \text{-var;}%
\left[ s,t\right] }\leq C\left\Vert \mathbf{x}\right\Vert _{\psi \text{-var;}%
\left[ s,t\right] }.
\end{equation*}%
This estimate remains true with $C=C\left( N,p,\varepsilon \right) $ if $%
\psi ^{-1}\left( \cdot \right) ^{p}$ is only assumed convex on $\left[
0,\varepsilon \right] $ and $\psi $ is such that $\psi \left( \eta s\right)
\leq \Delta _{\eta }\psi \left( s\right) $ for $s$ near $0+$ and $\Delta
_{\eta }\downarrow 0$ as $\eta \downarrow 0$. (These conditions are
satisfied for any $\psi \equiv \psi _{p,2}$ near $0+$.)
\end{theorem}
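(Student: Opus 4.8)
The plan is to run Lyons' control-function machinery, using the convexity hypothesis precisely to convert a $\psi$-variation control into a $p$-variation control to which the known lifting estimate (\ref{TerryLiftingEstimatePvar}) applies. First I would normalize: write $K := \left\Vert \mathbf{x}\right\Vert _{\psi\text{-var;}\left[ s,t\right]}$ and set $\omega(u,v) := V_{\psi\text{-var;}\left[ u,v\right]}(\mathbf{x}/K)$ for $s\le u\le v\le t$. Since a concatenation of partitions of $[u,w]$ and $[w,v]$ is a partition of $[u,v]$, the map $\omega$ is super-additive, vanishes on the diagonal and (for continuous paths of finite $\psi$-variation) is continuous; thus $\omega$ is a control function with $\omega(s,t)\le 1$. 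Taking the trivial partition in the definition of $V_\psi$ gives $\psi(\left\Vert \mathbf{x}_{u,v}\right\Vert/K)\le\omega(u,v)$, i.e. $\left\Vert \mathbf{x}_{u,v}\right\Vert\le K\,\psi^{-1}(\omega(u,v))$.

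The decisive step is to verify that $\tilde\omega(u,v):=\psi^{-1}(\omega(u,v))^{p}$ is again a control function. Since $\psi(0)=0$ forces $\psi^{-1}(0)=0$, the map $g:=\psi^{-1}(\cdot)^{p}$ is convex with $g(0)=0$, hence super-additive, $g(a)+g(b)\le g(a+b)$; being moreover increasing, $g(\omega(u,w))+g(\omega(w,v))\le g(\omega(u,w)+\omega(w,v))\le g(\omega(u,v))$, so $\tilde\omega$ inherits super-additivity from $\omega$. Now $\left\Vert \mathbf{x}_{u,v}\right\Vert^{p}\le K^{p}\tilde\omega(u,v)$ with $K^{p}\tilde\omega$ a control, so the equivalence for controls recalled before the theorem gives $\left\Vert \mathbf{x}\right\Vert_{p\text{-var;}\left[ u,v\right]}^{p}\le K^{p}\tilde\omega(u,v)$. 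Feeding this into (\ref{TerryLiftingEstimatePvar}) yields $\left\Vert S_{N}(\mathbf{x})_{u,v}\right\Vert\le C(N,p)\,K\,\psi^{-1}(\omega(u,v))$ for every subinterval. Applying the increasing $\psi$ gives $\psi(\left\Vert S_{N}(\mathbf{x})_{u,v}\right\Vert/(C K))\le\omega(u,v)$; summing over any partition $D$ of $[s,t]$ and using super-additivity of $\omega$ together with $\omega(s,t)\le 1$ shows $V_{\psi\text{-var;}\left[ s,t\right]}(S_{N}(\mathbf{x})/(CK))\le 1$, i.e. $\left\Vert S_{N}(\mathbf{x})\right\Vert_{\psi\text{-var;}\left[ s,t\right]}\le C(N,p)\,K$, which is the first assertion.

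For the second assertion $g$ is convex only on $[0,\varepsilon]$, so $\tilde\omega$ need not be super-additive wherever $\omega$ exceeds $\varepsilon$; the remedy is to rescale the path until its control stays inside the convex region. Using homogeneity of the CC-norm and of the lift $S_N$, replace $K$ by $\lambda=K/\eta$: then $\psi(\left\Vert \mathbf{x}_{u,v}\right\Vert/\lambda)=\psi(\eta\,\left\Vert \mathbf{x}_{u,v}\right\Vert/K)\le\Delta_{\eta}\,\psi(\left\Vert \mathbf{x}_{u,v}\right\Vert/K)$ for the small increments in play, whence $\omega_{\lambda}(u,v):=V_{\psi\text{-var;}\left[ u,v\right]}(\mathbf{x}/\lambda)\le\Delta_{\eta}\,\omega(u,v)\le\Delta_{\eta}$. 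Choosing $\eta=\eta(\varepsilon)$ small enough that $\Delta_{\eta}\le\varepsilon$ keeps $\omega_{\lambda}\le\varepsilon$ throughout $[s,t]$, so $g\circ\omega_{\lambda}$ is super-additive and the first-part argument goes through verbatim with $\lambda$ in place of $K$, giving $\left\Vert S_{N}(\mathbf{x})\right\Vert_{\psi\text{-var;}\left[ s,t\right]}\le C(N,p)\,\lambda=(C(N,p)/\eta(\varepsilon))\,K$, as claimed with $C(N,p,\varepsilon):=C(N,p)/\eta(\varepsilon)$.

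I expect the main obstacle to be precisely this localization. The growth estimate $\psi(\eta s)\le\Delta_{\eta}\psi(s)$ is only assumed near $0+$, so before the rescaling inequality may be applied uniformly over all partitions one must first reduce the $\psi$-variation to small increments, controlling the finitely many large ones separately via uniform continuity of $\mathbf{x}$ and the a.s. finiteness of $\left\Vert \mathbf{x}\right\Vert_{\psi\text{-var}}$; the "only behaviour near zero matters" principle from the definition of the generalized variation is what makes this reduction legitimate. The convexity-to-super-additivity passage in the second paragraph is the conceptual heart, but it is clean once set up; the bookkeeping of scales near $0+$ is where care is genuinely required.
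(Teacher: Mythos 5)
Your proposal is correct and follows essentially the same route as the paper's own proof: the same control $\omega(u,v)=V_{\psi;\left[u,v\right]}\left(\mathbf{x}/K\right)$, the same convexity-to-superadditivity step making $\left(\psi^{-1}\left(\omega\right)\right)^{p}$ a control so that (\ref{TerryLiftingEstimatePvar}) applies, and the same rescaling (your $1/\eta$ is the paper's factor $M$) to keep the control inside $\left[0,\varepsilon\right]$ in the locally convex case. The near-$0+$ caveat you flag at the end is real but harmless --- since increments of $\mathbf{x}/K$ are bounded by $\psi^{-1}\left(1\right)$, monotonicity and continuity of $\psi$ upgrade $\psi\left(\eta s\right)\leq\Delta_{\eta}\psi\left(s\right)$ from ``$s$ near $0+$'' to all $s\in\left[0,\psi^{-1}\left(1\right)\right]$ with a still-vanishing constant --- and the paper itself passes over this point without comment.
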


\begin{proof}
\underline{Step 1:} We first assume that $\psi ^{-1}\left( \cdot \right)
^{p} $ is convex on $[0,\infty )$. Fix $\left[ s,t\right] \subset \left[ 0,1%
\right] $ and define for $u,v\in \left[ s,t\right] $,%
\begin{equation*}
\omega \left( u,v\right) :=\sup_{D\subset \left[ u,v\right]
}\sum_{i:t_{i}\in D}\psi \left( \frac{\left\Vert
x_{t_{i},t_{i+1}}\right\Vert }{\left\Vert x\right\Vert _{\psi \text{-var,}%
\left[ s,t\right] }}\right) \text{.}
\end{equation*}%
Clearly, $\omega $ is a control function on $\left[ s,t\right] $ and from
the definition of $\psi $-variation it follows that $\omega \left(
s,t\right) \leq 1$. It is also clear from the definition that%
\begin{equation*}
\psi \left( \frac{\left\Vert x_{u,v}\right\Vert }{\left\Vert x\right\Vert
_{\psi \text{-var,}\left[ s,t\right] }}\right) \leq \omega \left( u,v\right)
\end{equation*}%
and this can be rewritten as $\left\Vert x_{u,v}\right\Vert ^{p}\leq
\left\Vert x\right\Vert _{\psi \text{-var,}\left[ s,t\right] }^{p}\left(
\psi ^{-1}\left( \omega \left( u,v\right) \right) \right) ^{p}$ and since
the upper bound is a control function in $\left( u,v\right) $ this yields 
\begin{equation*}
\left\Vert x\right\Vert _{p\text{-var;}\left[ u,v\right] }^{p}\leq
\left\Vert x\right\Vert _{\psi \text{-var,}\left[ s,t\right] }^{p}\left(
\psi ^{-1}\left( \omega \left( u,v\right) \right) \right) ^{p}.
\end{equation*}%
Using (\ref{TerryLiftingEstimatePvar}) (thanks to the convexity assumption,
the right-hand-side is control function!) we get%
\begin{equation*}
\left\Vert S_{N}\left( x\right) _{u,v}\right\Vert \leq c_{1}\left\Vert
x\right\Vert _{\psi \text{-var,}\left[ s,t\right] }\left( \psi ^{-1}\left(
\omega \left( u,v\right) \right) \right) \text{,}
\end{equation*}%
with $c_{1}=C\left( N,p\right) $. Using this estimate and the superaddivity
of $\omega $ to see that%
\begin{eqnarray*}
\sup_{D\subset \left[ s,t\right] }\sum_{i:t_{i}\in D}\psi \left( \frac{%
\left\Vert S_{N}\left( x\right) _{t_{i},t_{i+1}}\right\Vert }{%
c_{1}\left\Vert x\right\Vert _{\psi \text{-var,}\left[ s,t\right] }}\right)
&\leq &\sup_{D\cap \left[ s,t\right] }\sum_{i:t_{i}\in D}\omega \left(
t_{i},t_{i+1}\right) \\
&\leq &\omega \left( s,t\right) \leq 1\text{.}
\end{eqnarray*}%
By the definition of $\psi $-variation we conclude that $\left\Vert
S_{N}\left( x\right) \right\Vert _{\psi \text{-var,}\left[ s,t\right] }\leq
c_{1}\left\Vert x\right\Vert _{\psi \text{-var,}\left[ s,t\right] }$.\newline
\underline{Step-2:} We now assume that $\psi ^{-1}\left( \cdot \right) ^{p}$
is convex only on some interval $\left[ 0,\varepsilon \right] \subset \left[
0,1\right] $. Again, fix $\left[ s,t\right] \subset \left[ 0,1\right] $ and
define for $u,v\in \left[ s,t\right] $,%
\begin{equation*}
\omega \left( u,v\right) :=\sup_{D\subset \left[ u,v\right]
}\sum_{i:t_{i}\in D}\psi \left( \frac{\left\Vert
x_{t_{i},t_{i+1}}\right\Vert }{M\left\Vert x\right\Vert _{\psi \text{-var,}%
\left[ s,t\right] }}\right)
\end{equation*}%
where $M$ is chosen large enough so that $\omega \left( s,t\right) \leq
\varepsilon $. (This is possible by our assumption on $\Delta _{\eta }.$)
Again, $\omega $ is a control function on $\left[ s,t\right] $ and arguing
exactly as in step 1, using that $\psi ^{-1}\left( \omega \left( u,v\right)
\right) ^{p}$ is a also control on $\left[ s,t\right] $, a we led%
\begin{eqnarray*}
\sup_{D\subset \left[ s,t\right] }\sum_{i:t_{i}\in D}\psi \left( \frac{%
\left\Vert S_{N}\left( x\right) _{t_{i},t_{i+1}}\right\Vert }{%
c_{1}M\left\Vert x\right\Vert _{\psi \text{-var,}\left[ s,t\right] }}\right)
&\leq &\sup_{D\cap \left[ s,t\right] }\sum_{i:t_{i}\in D}\omega \left(
t_{i},t_{i+1}\right) \\
&\leq &\omega \left( s,t\right) \leq \varepsilon \leq 1\text{.}
\end{eqnarray*}%
By the definition of $\psi $-variation we conclude that $\left\Vert
S_{N}\left( x\right) \right\Vert _{\psi \text{-var,}\left[ s,t\right] }\leq
c_{1}M\left\Vert x\right\Vert _{\psi \text{-var,}\left[ s,t\right] }$.%
\newline
\end{proof}

\bigskip

\begin{corollary}
Let $X$ be a Gaussian process satisfying the assumptions of Theorem \ref%
{ThGausstailsXPsvar} and let $\mathbf{X}$ denote the corresponding rough
path. Then $S_{N}\left( \mathbf{X}\right) $ has finite $\psi _{p,2}$%
-variation for every $N>2$ and $\left\Vert S_{N}\left( \mathbf{X}\right)
\right\Vert _{\psi _{p,2}\text{-var;}\left[ 0,1\right] }$ has a Gauss tail.
Applied to $d$-dimensional Brownian motion, this says that Brownian motion
and all its iterated Stratonovich integrals up to order $N$, written as $%
S_{N}\left( \mathbf{B}\right) $ and viewed as a diffusion in the step-$N$
nilpotent group with $d$ generators (e.g.\ \cite{benarous-89, lyons-98}),
have Gaussian integrability in the sense that $\left\Vert S_{N}\left( 
\mathbf{B}\right) \right\Vert _{\psi _{2,2}\text{-var;}\left[ 0,1\right]
}<\infty $ has a Gauss tail.
\end{corollary}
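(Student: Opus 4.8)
The plan is to assemble the two preceding results. Theorem~\ref{ThGausstailsXPsvar} already delivers a Gauss tail for the $\psi_{p,2}$-variation of the base Gaussian rough path $\mathbf{X}\in C_{0}([0,1],G^{2}(\mathbb{R}^{d}))$, and the foregoing lifting theorem bounds the $\psi$-variation of the Lyons extension $S_{N}$ by that of the path it lifts. The corollary is then just the composition of these two facts, applied $\omega$-wise. The only point requiring attention is that the hypotheses line up, and in particular that the two roles of $p$ are reconciled.

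First I would note that $\rho<3/2$ gives $p=2\rho<3$ and $[p]=2$ (the rough-path regime), so that the base rough path takes values precisely in $G^{[p]}(\mathbb{R}^{d})=G^{2}(\mathbb{R}^{d})$; moreover Theorem~\ref{ThGausstailsXPsvar} provides finite homogeneous $(2\rho+\varepsilon)$-variation almost surely, so that the Lyons extension $S_{N}(\mathbf{X})$ is well defined for every $N\ge2$. Next I would verify that $\psi=\psi_{p,2}$ meets the hypotheses of the lifting theorem: it is continuous, strictly increasing and onto, $\psi^{-1}(\cdot)^{p}$ is convex near $0+$, and $\psi(\eta s)\le\Delta_{\eta}\psi(s)$ holds with $\Delta_{\eta}\downarrow0$ for $s$ near $0+$. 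These are exactly the conditions the lifting theorem flags as automatic for any $\psi\equiv\psi_{p,2}$ near $0+$.

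With this in hand, applying the lifting theorem pathwise yields, for a deterministic constant $C=C(N,p)$ and almost every $\omega$,
\begin{equation*}
\left\Vert S_{N}(\mathbf{X}(\omega))\right\Vert_{\psi_{p,2}\text{-var;}[0,1]}\le C\left\Vert \mathbf{X}(\omega)\right\Vert_{\psi_{p,2}\text{-var;}[0,1]}.
\end{equation*}
In particular the left-hand side is a.s.\ finite, and since multiplying a random variable with a Gauss tail by a fixed constant preserves the Gauss tail, the tail estimate for $\left\Vert \mathbf{X}\right\Vert_{\psi_{p,2}\text{-var;}[0,1]}$ coming from Theorem~\ref{ThGausstailsXPsvar} transfers verbatim to $\left\Vert S_{N}(\mathbf{X})\right\Vert_{\psi_{p,2}\text{-var;}[0,1]}$. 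This establishes the first assertion for every $N>2$.

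For the Brownian specialization I would take $X=B$, a $d$-dimensional Brownian motion, whose covariance has finite $\rho$-variation with $\rho=1<3/2$, so $p=2\rho=2$ and the relevant modulus is $\psi_{2,2}$. Here $\mathbf{X}=\mathbf{B}$ is Brownian motion enhanced with L\'{e}vy's area, and $S_{N}(\mathbf{B})$ is exactly the process recording Brownian motion together with its iterated Stratonovich integrals up to order $N$, viewed as a diffusion in the step-$N$ nilpotent group; the general statement then gives the claimed Gauss tail for $\left\Vert S_{N}(\mathbf{B})\right\Vert_{\psi_{2,2}\text{-var;}[0,1]}$. The main (and essentially only) obstacle is the bookkeeping described above—ensuring $[p]=2$ so the base level matches $G^{[p]}$, and confirming that $\psi_{p,2}$ sits inside the admissible class of the lifting theorem—after which the corollary is a direct citation of the two theorems.
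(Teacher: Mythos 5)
Your proposal is correct and is exactly the argument the paper intends: the corollary is stated without a separate proof precisely because it is the pathwise composition of Theorem \ref{ThGausstailsXPsvar} (Gauss tail for $\left\Vert \mathbf{X}\right\Vert _{\psi _{p,2}\text{-var};\left[ 0,1\right] }$) with the preceding lifting theorem (applicable since $\left[ p\right] =2$ and $\psi _{p,2}$ satisfies its hypotheses), and a constant multiple of a random variable with a Gauss tail again has a Gauss tail. Your bookkeeping on $\rho <3/2$, $p=2\rho $, $N\geq \left[ p\right] $, and the Brownian specialization $\rho =1$, $\psi _{2,2}$ matches the paper's setup.
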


\begin{remark}
To write this result in its most explicit form, take a multi-index $I=\left(
i_{1},\dots ,i_{N}\right) $ $\in \left\{ 1,\dots ,d\right\} ^{N}$ and 
\begin{eqnarray*}
\Delta _{\left[ t_{i},t_{i+1}\right] }^{N} &\equiv &\left\{ \left(
s_{1},\dots ,s_{N}\right) :t_{i}\leq s_{1}<\dots <t_{N}<t_{i+1}\right\} 
\text{,} \\
\circ \mathrm{d}B^{I} &\equiv &\circ \mathrm{d}B^{i_{1}}\circ \dots \circ 
\mathrm{d}B^{i_{N}}.
\end{eqnarray*}%
The Gauss tail of $\left\Vert S_{N}\left( \mathbf{B}\right) \right\Vert
_{\psi \text{-var;}\left[ 0,1\right] }<\infty $ is equivalent to the Gauss
tail of 
\begin{equation*}
\inf \left\{ \varepsilon >0:\sup_{D\subset \left[ 0,1\right]
}\sum_{i:t_{i}\in D}\psi \left( \left\vert \frac{\int_{\Delta _{\left[
t_{i},t_{i+1}\right] }^{N}}\circ \mathrm{d}B^{I}}{\varepsilon ^{\left\vert
I\right\vert }}\right\vert ^{\frac{1}{\left\vert I\right\vert }}\right) \leq
1\right\}
\end{equation*}%
for all possible multi-indices $I\in \left\{ 1,\dots ,d\right\} ^{N}.$
\end{remark}

\begin{remark}
(1) The same argument shows that solutions to RDEs (rough differential
equations) driven by some $\mathbf{X}$ with finite $\psi _{p,2}$-variation
also have finite $\psi _{p,2}$-variation (and the precise estimates from 
\cite{friz-victoir-06-Euler} also yields integrability properties for the
corresponding $\psi _{p,2}$-seminorm). \newline
(2) If one only considers the case of driving Brownian motion and the RDE
solution as a path in Euclidean space (rather than a rough path in its own
right), finite $\psi _{2,2}$-variation is easy to see directly: Under
standard regularity assumptions, RDE solutions equal Stratonovich solutions
and are seen to be continuous semimartingales. But every continuous local
martingale is a time-change of Brownian motion (therefore of finite $\psi
_{2,2}$-variation); the bounded variation part is of course harmless.\newline
(3) The previous argument is not robust and relies crucially on the
semimartingale nature of the RDE solution. There is a variety of examples of
differential equations driven by Gaussian (and also Markovian) signals with
non-semimartingale solutions. Our results imply in particular $\psi _{p,2}$%
-variation regularity of sample paths is a robust property and valid beyond
the usual semimartingale context.
\end{remark}

\appendix

\section{Appendix: Variation norms\label{AppendixVariationNorms}}

The original definitions of \cite{MusielakOrlicz59:OngeneralizedVariationI}
and \cite{LesniewiczOrlicz73:OngeneralizedVariationsII} extend immediately
to paths with values in a metric space $\left( E,d\right) $. The recent
monograph \cite{dudley-norvaisa-99} contains extensive references to $\psi $%
-variation.

\begin{definition}
Let $x\in C\left( \left[ 0,1\right] ,E\right) $ and $\psi :\left[ 0,\infty
\right) \rightarrow \left[ 0,\infty \right) ,$ $\psi \left( 0\right) =0$,
continuous, strictly increasing and onto\footnote{%
These assumptions can be relaxed but allow us to consider $\psi ^{-1}$
without added technicalities.}. Then we define the $\psi $-variation "norm"
of $x$ over $\left[ s,t\right] $ by 
\begin{equation*}
\left\vert x\right\vert _{\psi \text{-var,}\left[ s,t\right] }:=\inf
\{\varepsilon >0:\underset{=:V_{\psi \text{;}\left[ s,t\right]
}^{\varepsilon }\left( x\right) }{\underbrace{\sup_{D\subset \left[ s,t%
\right] }\sum_{i:t_{i}\in D}\psi \left( \frac{d\left(
x_{t_{i}},x_{t_{i+1}}\right) }{\varepsilon }\right) }}\leq 1\}\text{.}
\end{equation*}%
We write $V_{\psi \text{,}\left[ s,t\right] }\left( x\right) $ rather than $%
V_{\psi \text{,}\left[ s,t\right] }^{\varepsilon }\left( x\right) $ when $%
\varepsilon =1$ and also define%
\begin{equation*}
C^{\psi \text{-var}}\left( \left[ 0,1\right] ,E\right) :=\left\{ x\in
C\left( \left[ 0,1\right] ,E\right) :\left\vert x\right\vert _{\psi \text{%
-var;}\left[ 0,1\right] }<\infty \right\} .
\end{equation*}
\end{definition}

It may help to keep in mind that for $\psi \left( s\right) =s^{p}$ this is
consistent with%
\begin{equation*}
V_{\left( \cdot \right) ^{p};\left[ s,t\right] }\left( x\right) \equiv
\left\vert x\right\vert _{p\text{-var;}\left[ s,t\right] }^{p}:=\sup_{D%
\subset \left[ s,t\right] }\sum_{i:t_{i}\in D}d\left(
x_{t_{i}},x_{t_{i+1}}\right) ^{p}.
\end{equation*}

\begin{definition}
We say that a function $\psi :\left[ 0,\infty \right) \rightarrow \left[
0,\infty \right) $ has the property ($d_{2}^{loc}$), if 
\begin{equation*}
\lim \sup_{s\downarrow 0}\psi ^{-1}\left( 2\psi \left( s\right) \right)
/s<\infty .
\end{equation*}%
Equivalently, for any $T>0$ there exists a constant $d_{2}=d_{2}\left(
T\right) $ such that $\forall s\in \left[ 0,T\right] :2\psi \left( s\right)
\leq \psi \left( d_{2}s\right) $.\newline
We say that a function $\psi :\left[ 0,\infty \right) \rightarrow \left[
0,\infty \right) $ has the property ($\Delta _{2}^{loc}$), if 
\begin{equation*}
\lim \sup_{s\downarrow 0}\psi \left( 2\psi \left( s\right) \right) /s<\infty
.
\end{equation*}%
Equivalently, for any $T>0$ there exists a constant $\Delta _{2}=\Delta
_{2}\left( T\right) $ such that $\forall s\in \left[ 0,T\right] :\psi \left(
2s\right) \leq \Delta _{2}\psi \left( s\right) $. \newline
We speak of properties $d_{2}$ and $\Delta _{2}$ if the $\lim
\sup_{s\downarrow 0}$ can be replaced by $\sup_{s\in \mathbb{R}^{+}}$.
\end{definition}

Assuming the above properties, the notion of $\psi $-variation behaves "as
expected".

\begin{proposition}
Let $\psi :\left[ 0,\infty \right) \rightarrow \left[ 0,\infty \right) ,$ $%
\psi \left( 0\right) =0$, continuous, strictly increasing, onto and let $%
x\in C\left( \left[ 0,1\right] ,E\right) $. Assume further that $\psi $
fulfills the properties ($d_{2}^{loc})$ and ($\Delta _{2}^{loc}$). Then%
\begin{equation*}
V_{\psi ;\left[ 0,1\right] }\left( x\right) <\infty \Leftrightarrow
\left\vert x\right\vert _{\psi \text{-var;}\left[ 0,1\right] }<\infty .
\end{equation*}
\end{proposition}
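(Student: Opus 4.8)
The plan is to prove the two implications separately, using the two local doubling conditions in complementary r\^{o}les: $(d_{2}^{loc})$ for the implication $V_{\psi}<\infty\Rightarrow\left\vert x\right\vert _{\psi\text{-var}}<\infty$, and $(\Delta_{2}^{loc})$ for the converse. Before either step I would record two elementary facts. First, since $\psi$ is strictly increasing, the map $\varepsilon\mapsto V_{\psi;\left[ 0,1\right] }^{\varepsilon}\left( x\right) $ is non-increasing, so $\{\varepsilon>0:V^{\varepsilon}\leq1\}$ is of the form $[\varepsilon_{0},\infty)$ or $(\varepsilon_{0},\infty)$ and $\left\vert x\right\vert _{\psi\text{-var}}$ is finite precisely when this set is non-empty. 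Second, since $x$ is continuous on the compact interval $\left[ 0,1\right] $, its image is bounded and $M:=\sup_{0\leq s<t\leq1}d\left( x_{s},x_{t}\right) <\infty$; hence every increment $d\left( x_{t_{i}},x_{t_{i+1}}\right) $ appearing in any sum is at most $M$. This is what lets me apply the local conditions with a single fixed constant valid on a bounded interval $\left[ 0,T\right] $, rather than only for arguments tending to $0$.

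For $V_{\psi}<\infty\Rightarrow\left\vert x\right\vert _{\psi\text{-var}}<\infty$, set $K:=V_{\psi;\left[ 0,1\right] }\left( x\right) <\infty$ and invoke $(d_{2}^{loc})$ with $T=M$: there is $d_{2}>1$ with $2\psi\left( s\right) \leq\psi\left( d_{2}s\right) $ for $s\in\left[ 0,M\right] $, equivalently $\psi\left( s/d_{2}\right) \leq\tfrac{1}{2}\psi\left( s\right) $ on that range. For every $\varepsilon\geq1$ each argument $d/\varepsilon$ lies in $\left[ 0,M\right] $, so replacing it by $d/\left( d_{2}\varepsilon\right) $ halves each summand and, taking the supremum over dissections, $V^{d_{2}\varepsilon}\leq\tfrac{1}{2}V^{\varepsilon}$. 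Iterating from $\varepsilon=1$ yields $V^{d_{2}^{n}}\leq2^{-n}K$, which is $\leq1$ once $n$ is large, whence $\left\vert x\right\vert _{\psi\text{-var}}\leq d_{2}^{n}<\infty$. This geometric-decay step is the heart of the argument.

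For the converse, suppose $\left\vert x\right\vert _{\psi\text{-var}}<\infty$ and pick any $\varepsilon^{\ast}$ strictly larger than it, so that $V^{\varepsilon^{\ast}}\leq1$ by the monotonicity noted above. If $\varepsilon^{\ast}\leq1$ then $V_{\psi}\leq V^{\varepsilon^{\ast}}\leq1$ and we are done, so assume $\varepsilon^{\ast}>1$ and take the least integer $k$ with $2^{k}\geq\varepsilon^{\ast}$, so that $2^{k}<2\varepsilon^{\ast}$. Apply $(\Delta_{2}^{loc})$ with $T=2M$: there is $\Delta_{2}$ with $\psi\left( 2s\right) \leq\Delta_{2}\psi\left( s\right) $ on $\left[ 0,2M\right] $, which iterates to $\psi\left( 2^{k}s\right) \leq\Delta_{2}^{k}\psi\left( s\right) $ provided every intermediate argument stays in $\left[ 0,2M\right] $. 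For an increment $d\leq M$, writing $d=\varepsilon^{\ast}\left( d/\varepsilon^{\ast}\right) $ and using that $\psi$ is increasing with $\varepsilon^{\ast}\leq2^{k}$ gives $\psi\left( d\right) \leq\psi\left( 2^{k}\left( d/\varepsilon^{\ast}\right) \right) \leq\Delta_{2}^{k}\psi\left( d/\varepsilon^{\ast}\right) $, and the largest argument $2^{k}\left( d/\varepsilon^{\ast}\right) <2d\leq2M$ indeed stays in range. Summing over any dissection and taking the supremum yields $V_{\psi}\left( x\right) \leq\Delta_{2}^{k}V^{\varepsilon^{\ast}}\leq\Delta_{2}^{k}<\infty$.

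The only genuine subtlety, and the step I would treat most carefully, is the bookkeeping of the ranges on which the local conditions hold: both are stated as $\limsup_{s\downarrow0}$ conditions and so only furnish constants $d_{2}\left( T\right) ,\Delta_{2}\left( T\right) $ on bounded intervals. The uniform increment bound $M$ is exactly what converts these into usable inequalities with a single fixed constant, and in the $(\Delta_{2}^{loc})$ step one must verify that iterating the doubling $k$ times keeps every intermediate argument below $2M$. Everything else is routine monotonicity and summation.
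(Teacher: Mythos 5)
Your proof is correct and takes essentially the same approach as the paper: $\left( d_{2}^{loc}\right) $ is used to force $V_{\psi ;\left[ 0,1\right] }^{\varepsilon }\left( x\right) $ to decay as $\varepsilon $ grows (the paper via the fixed-point inequality $2\lim_{\varepsilon \rightarrow \infty }V^{\varepsilon }\leq \lim_{\varepsilon \rightarrow \infty }V^{\varepsilon }<\infty $, you via the explicit geometric iteration $V^{d_{2}^{n}}\leq 2^{-n}V^{1}$), while $\left( \Delta _{2}^{loc}\right) $ is iterated to pass from $\psi \left( d/\varepsilon ^{\ast }\right) $ back to $\psi \left( d\right) $ in the converse direction. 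Your bookkeeping (taking $\varepsilon ^{\ast }$ strictly above the infimum, and tracking the bounded ranges on which the local doubling constants are valid) is if anything slightly more careful than the paper's own write-up, but the underlying mechanism is identical.
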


\begin{proof}
Using ($d_{2}^{loc}$) with the result constant $d_{2}=d_{2}\left( T\right) $
with $T=1$ say (for $\varepsilon $ large enough), 
\begin{eqnarray*}
2\lim_{\varepsilon \rightarrow \infty }V_{\psi ;\left[ 0,1\right]
}^{\varepsilon }\left( x\right) &\leq &\lim_{\varepsilon \rightarrow \infty
}V_{\psi ;\left[ 0,1\right] }^{\varepsilon /d_{2}}\left( x\right) \\
&=&\lim_{\varepsilon \rightarrow \infty }V_{\psi ;\left[ 0,1\right]
}^{\varepsilon }\left( x\right) \\
&\leq &V_{\psi ;\left[ 0,1\right] }\left( x\right) <\infty
\end{eqnarray*}%
It follows that $\lim_{\varepsilon \rightarrow \infty }V_{\psi ;\left[ 0,1%
\right] }^{\varepsilon }\left( x\right) =0$ and in particular there exists $%
\varepsilon ^{\ast }$ such that $V_{\psi ;\left[ 0,1\right] }^{\varepsilon
^{\ast }}\left( x\right) \leq 1$. To see the other direction, we make use of
($\Delta _{2}^{loc}$). Of course, we may assume $\left\vert x\right\vert
_{\psi \text{-var;}\left[ 0,1\right] }>0$ and note that ($\Delta _{2}^{loc}$%
) implies for every $c\geq 0$ and every $T\geq 0$ the existence of a
constant $\Delta _{c}$ such that $\psi \left( cs\right) \leq \Delta _{c}\psi
\left( s\right) $ for all $s\in \left[ 0,T\right] .$Choosing $T=\frac{%
\left\vert x\right\vert _{0,\left[ 0,1\right] }}{\left\vert x\right\vert
_{\psi \text{-var;}\left[ 0,1\right] }}$ we can estimate 
\begin{eqnarray*}
\sum_{i}\psi \left( d\left( x_{t_{i},t_{i+1}}\right) \right) &\leq
&\sum_{i}\psi \left( \frac{d\left( x_{t_{i},t_{i+1}}\right) }{\left\vert
x\right\vert _{\psi \text{-var;}\left[ 0,1\right] }}\left\vert x\right\vert
_{\psi \text{-var;}\left[ 0,1\right] }\right) \\
&\leq &\Delta _{c}\sum_{i}\psi \left( \frac{d\left( x_{t_{i},t_{i+1}}\right) 
}{\left\vert x\right\vert _{\psi \text{-var;}\left[ 0,1\right] }}\right)
\leq \Delta _{c}
\end{eqnarray*}%
with $c=\left\vert x\right\vert _{\psi \text{-var;}\left[ 0,1\right] }$.
Taking the supremum over all dissections $D=\left( t_{i}\right) \subset %
\left[ 0,1\right] $ the result follows.
\end{proof}

\begin{lemma}
\label{LemmaIncrementSmallerNorm}Let $\psi :\left[ 0,\infty \right)
\rightarrow \left[ 0,\infty \right) ,$ $\psi \left( 0\right) =0$,
continuous, strictly increasing, onto and let $x\in C\left( \left[ 0,1\right]
,E\right) $. Then, for all $0\leq s<t\leq 1$,%
\begin{equation*}
\left\vert x_{s,t}\right\vert \leq \psi ^{-1}\left( 1\right) \left\vert
x\right\vert _{\psi \text{-var,}\left[ s,t\right] }\text{.}
\end{equation*}
\end{lemma}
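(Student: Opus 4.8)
The plan is to read the bound off directly from the definition of the $\psi$-variation norm by testing it against the trivial (two-point) dissection of $[s,t]$. Recall that $\left\vert x\right\vert_{\psi\text{-var,}[s,t]} = \inf\{\varepsilon>0 : V_{\psi;[s,t]}^{\varepsilon}(x) \le 1\}$, where $V_{\psi;[s,t]}^{\varepsilon}(x) = \sup_{D\subset[s,t]}\sum_{i:t_i\in D}\psi(d(x_{t_i},x_{t_{i+1}})/\varepsilon)$. First I would fix any admissible $\varepsilon>0$, i.e.\ any $\varepsilon$ with $V_{\psi;[s,t]}^{\varepsilon}(x)\le 1$, and note that the dissection $D=\{s,t\}$ of $[s,t]$ is among those competing in the supremum defining $V_{\psi;[s,t]}^{\varepsilon}(x)$. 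Its sole contribution is $\psi(\left\vert x_{s,t}\right\vert/\varepsilon)$, where $\left\vert x_{s,t}\right\vert = d(x_s,x_t)$, so this single term is dominated by the supremum, whence $\psi(\left\vert x_{s,t}\right\vert/\varepsilon)\le V_{\psi;[s,t]}^{\varepsilon}(x)\le 1$.

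Next, since $\psi$ is continuous, strictly increasing and onto $[0,\infty)$, it admits a continuous, strictly increasing inverse $\psi^{-1}$. Applying $\psi^{-1}$ to the inequality $\psi(\left\vert x_{s,t}\right\vert/\varepsilon)\le 1$ and using monotonicity of $\psi^{-1}$ gives $\left\vert x_{s,t}\right\vert/\varepsilon \le \psi^{-1}(1)$, that is, $\left\vert x_{s,t}\right\vert \le \psi^{-1}(1)\,\varepsilon$. This bound holds for every admissible $\varepsilon$.

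Finally I would pass to the infimum over admissible $\varepsilon$. Because $\varepsilon\mapsto \psi(\,\cdot\,/\varepsilon)$ is non-increasing (as $\psi$ is increasing and $\cdot/\varepsilon$ is decreasing in $\varepsilon$), the map $\varepsilon\mapsto V_{\psi;[s,t]}^{\varepsilon}(x)$ is non-increasing, so the admissible set is an interval whose infimum equals $\left\vert x\right\vert_{\psi\text{-var,}[s,t]}$. Choosing a sequence $\varepsilon_n\downarrow \left\vert x\right\vert_{\psi\text{-var,}[s,t]}$ of admissible values and letting $n\to\infty$ in $\left\vert x_{s,t}\right\vert\le\psi^{-1}(1)\varepsilon_n$ yields the claim $\left\vert x_{s,t}\right\vert\le \psi^{-1}(1)\left\vert x\right\vert_{\psi\text{-var,}[s,t]}$.

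There is no genuine obstacle here; the statement is essentially immediate from the definition, and the computation above is the whole of it. The only point deserving a line of care is the passage to the infimum, which is justified by the monotonicity of $\varepsilon\mapsto V_{\psi;[s,t]}^{\varepsilon}(x)$ noted above. The degenerate case $\left\vert x\right\vert_{\psi\text{-var,}[s,t]}=0$ needs no separate treatment: the limit argument then forces $\left\vert x_{s,t}\right\vert\le\psi^{-1}(1)\varepsilon_n\to 0$, so the inequality holds trivially.
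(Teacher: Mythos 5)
Your proof is correct and takes essentially the same route as the paper: test the single two-point dissection $\{s,t\}$ against the supremum defining $V_{\psi;[s,t]}^{\varepsilon}(x)$, apply the monotone inverse $\psi^{-1}$, and pass to the infimum over admissible $\varepsilon$. If anything, you are slightly more careful than the paper, which normalizes by $\left\vert x\right\vert _{\psi \text{-var,}\left[ s,t\right] }$ and asserts $V_{\psi;\left[ s,t\right] }\bigl( x/\left\vert x\right\vert _{\psi \text{-var,}\left[ s,t\right] }\bigr) \leq 1$ directly from the definition — a step that itself rests on exactly the limiting argument (admissibility of $\varepsilon_n \downarrow \left\vert x\right\vert _{\psi \text{-var,}\left[ s,t\right] }$ together with continuity of $\psi$) that you spell out.
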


\begin{proof}
Clearly,%
\begin{equation*}
\frac{\left\vert x_{s,t}\right\vert }{\left\vert x\right\vert _{\psi \text{%
-var,}\left[ s,t\right] }}\leq \psi ^{-1}\left( V_{\psi \text{-var,}\left[
s,t\right] }\left( \frac{x}{\left\vert x\right\vert _{\psi \text{-var,}\left[
s,t\right] }}\right) \right)
\end{equation*}%
and by definition of $V_{\psi }$ it follows that $\left\vert
x_{s,t}\right\vert \leq \psi ^{-1}\left( 1\right) \left\vert x\right\vert
_{\psi \text{-var,}\left[ s,t\right] }$ as required.
\end{proof}

\section{Appendix: Rough Paths\label{AppendixRoughPaths}}

\subsection{Notation\label{SubsectionNotation}}

Complete expositions of rough path theory include \cite{lyons-98}, \cite%
{lyons-qian-02}. For the reader's convenience we review some basic concepts.
For a fixed $N\geq 0$, the truncated tensor algebra $T^{N}\left( \mathbb{R}%
^{d}\right) $ of degree $N$ is defined as 
\begin{equation*}
T^{N}\left( \mathbb{R}^{d}\right) :=\oplus _{k=0}^{N}\left( \mathbb{R}%
^{d}\right) ^{\otimes k}\text{,}
\end{equation*}%
where we set $\left( \mathbb{R}^{d}\right) ^{0}:=\mathbb{R}^{d}$ and $\left( 
\mathbb{R}^{d}\right) ^{\otimes k}$, equipped with the usual tensor product.
Then $T^{N}\left( \mathbb{R}^{d}\right) $ forms an algebra under the usual
scalar product and vector addition. Using Young integrals we can define for
a path $x\in C^{p\text{-var}}\left( \left[ 0,1\right] ,\mathbb{R}^{d}\right) 
$ with $p\in \left[ 1,2\right) $ its step-$N$ lift by%
\begin{eqnarray*}
S_{N}\left( x\right) :[0,1] &\rightarrow &T^{N}\left( \mathbb{R}^{d}\right)
\\
t &\mapsto &1+\sum_{i=i}^{N}\int_{0<s_{1}<\cdots <s_{k}<t}\mathrm{d}x\otimes
\cdots \otimes \mathrm{d}x
\end{eqnarray*}%
This yields a path $S_{N}\left( x\right) $ with values in $T^{N}\left( 
\mathbb{R}^{d}\right) $. There are numerous non-linear relations between
these iterated integrals and $S_{N}\left( x\right) $ stays in a the proper
subset 
\begin{equation*}
G^{N}\left( \mathbb{R}^{d}\right) :=\left\{ \mathbf{g}\in T^{N}\left( 
\mathbb{R}^{d}\right) :\exists x\text{ smooth:\ }S_{N}\left( x\right) _{1}=%
\mathbf{g}\right\} \text{.}
\end{equation*}%
It turns out that $G^{N}\left( \mathbb{R}^{d}\right) $ is a Lie group under
induced tensor multiplication and (a possible realization of) the step-$N$
free nilpotent group with $d$ generators. The Carnot-Caratheodory (CC) "norm"%
\begin{equation*}
\left\Vert \mathbf{g}\right\Vert =\inf \left\{ \int_{0}^{1}\left\vert \dot{x}%
_{u}\right\vert \mathrm{d}u:x\text{ smooth,\ }S_{N}\left( x\right) _{1}=%
\mathbf{g}\right\}
\end{equation*}%
induces a (left-invariant) metric, the CC\ metric $d:\left( \mathbf{g},%
\mathbf{h}\right) \mapsto \left\Vert \mathbf{g}^{-1}\otimes \mathbf{h}%
\right\Vert $. The CC norm is homogenous under dilation%
\begin{equation*}
\mathbf{g}=1+\mathbf{g}^{1}+\cdots +\mathbf{g}^{N}\mapsto 1+\lambda \mathbf{g%
}^{1}+\dots \lambda ^{N}\mathbf{g}^{N}\text{.}
\end{equation*}%
and using equivalence of continuous, homogenous norms (by a compactness
argument) one the Lipschitz equivalence%
\begin{equation*}
\left\Vert \mathbf{g}\right\Vert \sim \left\vert \mathbf{g}^{1}\right\vert
+\left\vert \mathbf{g}^{2}\right\vert ^{1/2}+\dots +\left\vert \mathbf{g}%
^{N}\right\vert ^{1/N}
\end{equation*}%
which is very useful in compuations. Having a metric structure on $%
G^{N}\left( \mathbb{R}^{d}\right) $ we can speak paths of finite $p$- or $%
\psi $-variation, as well as of $\alpha $-H\"{o}lder or $\varphi $-H\"{o}%
lder regularity. The class of (weak geometric) $p$-rough path is defined as%
\begin{equation*}
C^{p\text{-var}}\left( \left[ 0,1\right] ,G^{\left[ p\right] }\left( \mathbb{%
R}^{d}\right) \right) .
\end{equation*}

The order of nilpotency is tied to the regularity of the path. In the
example of Brownian motion and L\'{e}vy's area (which give rise to a path in
the step-$2$ group, one can take $p\in \left( 2,3\right) $. Rough path
theory implies that $p$-rough paths drive differential equations in a fully
determinstic way with a variety of continuity properties of the solution map.

\subsection{Translating Rough paths}

We are interested in the translates of rough paths by paths of a
Cameron-Martin space. For Brownian motion it is obvious that Cameron-Martin
paths are of finite $1$-variation. On the other hand, we have seen that
(theorem \ref{Th2dimVarCMspace}) for many Gaussian processes $X$ one has $%
\mathcal{H}\left( X\right) \hookrightarrow C^{\rho \text{-var}}$ for some $%
\rho $ where $\mathcal{H}\left( X\right) $ is the Cameron-Martin space
associated to $X$. We recall the definition of the translation operator in
rough path theory (cf.\ \cite{lyons-qian-02}).

\begin{definition}
For $\mathbf{x}\in C^{p\text{-var}}\left( \left[ 0,1\right] ,G^{2}\left( 
\mathbb{R}^{d}\right) \right) ,\,p\geq 1$ and $h\in C^{\rho \text{-var}%
}\left( \left[ 0,1\right] ,\mathbb{R}^{d}\right) $ such that $1/p+1/\rho >1$%
, we define the translation operator $T_{h}$ by%
\begin{equation*}
T_{h}\left( \mathbf{x}\right) _{s,t}=1+\left( \mathbf{x}_{s,t}^{1}+h_{s,t}%
\right) +\left( \mathbf{x}_{s,t}^{2}+\int_{s}^{t}x_{s,u}\otimes \mathrm{d}%
h_{u}+\int_{s}^{t}h_{s,u}\otimes \mathrm{d}x_{u}+\int_{s}^{t}h_{s,u}\otimes 
\mathrm{d}h_{u}\right)
\end{equation*}%
where all cross integrals are well-defined Young integrals.
\end{definition}

One should note that for smooth paths~$x,h$ and $\mathbf{x}=S_{2}\left(
x\right) $, the step-$2$ lift of $x$, the translate $T_{h}\left( \mathbf{x}%
\right) $ is precisely $S_{2}\left( x+h\right) $.

\begin{theorem}
\label{ThTranslationPhiVarAndRhoVar}Assume $p\geq 1$ and $1/p+1/\rho >1$.
Let $\psi :\left[ 0,\infty \right) \rightarrow \left[ 0,\infty \right) ,$ $%
\psi \left( 0\right) =0$, continuous, strictly increasing and onto. Assume
that $\psi $ satisfies the property ($d_{2}^{loc}$) and that $\psi $ is such
that $\psi \left( \eta s\right) \leq \Delta _{\eta }\psi \left( s\right) $
for every $\eta >0$, $s$ near $0+$ and that $\Delta _{\eta }\downarrow 0$ as 
$\eta \downarrow 0$. Furthermore, assume

\begin{enumerate}
\item \label{PropertyPsicomposedRootconvex}The function $x\mapsto \psi
\left( x^{1/\rho }\right) $ is convex near $0+$

\item \label{PropertyPsiinversetothepowerConvex}The function $x\mapsto
\left( \psi ^{-1}\left( x\right) \right) ^{p}$ is convex near $0+$
\end{enumerate}

(These conditions are satisfied for any $\psi \equiv \psi _{p,2}$ near $0+$%
.) Then there exists a constant $C>0$ such that for all $\mathbf{y}\in
C^{\psi \text{-var}}\left( \left[ 0,1\right] ,G^{2}\left( \mathbb{R}%
^{d}\right) \right) $ and $h\in C^{\rho \text{-var}}\left( \left[ 0,1\right]
,\mathbb{R}^{d}\right) $%
\begin{equation}
\left\Vert T_{h}\left( \mathbf{y}\right) \right\Vert _{\psi -\text{var;}%
\left[ s,t\right] }\leq C\left( \left\Vert \mathbf{y}\right\Vert _{\psi -%
\text{var;}\left[ s,t\right] }+\left\vert h\right\vert _{\rho \text{-var;}%
\left[ s,t\right] }\right) .  \label{EqNormTranslateRoughPathbyCM}
\end{equation}
\end{theorem}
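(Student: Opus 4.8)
The plan is to reduce the statement to the control-function argument already used in the $\psi$-variation lifting estimate above, after first recording a pointwise bound on the increments of $T_h(\mathbf{y})$. Fix $[s,t]\subseteq[0,1]$ and abbreviate $L:=\left\Vert \mathbf{y}\right\Vert _{\psi\text{-var};[s,t]}$ and $K:=\left\vert h\right\vert _{\rho\text{-var};[s,t]}$; the degenerate cases $L=0$ or $K=0$ are easy and handled separately, so assume $0<L,K<\infty$. Writing out $T_h(\mathbf{y})_{u,v}$ on a subinterval $[u,v]\subseteq[s,t]$ and using the homogeneous-norm equivalence $\left\Vert \mathbf{g}\right\Vert \sim\left\vert \mathbf{g}^{1}\right\vert +\left\vert \mathbf{g}^{2}\right\vert ^{1/2}$ on $G^{2}(\mathbb{R}^{d})$, the first level contributes $\left\vert \mathbf{y}^{1}_{u,v}+h_{u,v}\right\vert $ and the second level the square-roots of $\mathbf{y}^{2}_{u,v}$ and of the three cross-integrals. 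Each cross integral is a well-defined Young integral (as demanded in the definition of $T_h$, since $1/p+1/\rho>1$) and obeys the Young estimate, e.g.\ $\left\vert \int_{u}^{v}y_{u,r}\otimes\mathrm{d}h_{r}\right\vert \leq c\left\vert y\right\vert _{p\text{-var};[u,v]}\left\vert h\right\vert _{\rho\text{-var};[u,v]}$; after $2ab\leq a^{2}+b^{2}$ on the mixed term this yields the pointwise bound
\[
\left\Vert T_h(\mathbf{y})_{u,v}\right\Vert \leq c_{1}\bigl(\left\Vert \mathbf{y}\right\Vert _{p\text{-var};[u,v]}+\left\vert h\right\vert _{\rho\text{-var};[u,v]}\bigr).
\]

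Next I would turn the $p$-variation of $\mathbf{y}$ back into $\psi$-variation exactly as in Step 1 of the preceding theorem: the map $\omega_{\mathbf{y}}(u,v):=V_{\psi;[u,v]}(\mathbf{y}/L)$ is a control function with $\omega_{\mathbf{y}}(s,t)\leq 1$, and because $x\mapsto(\psi^{-1}(x))^{p}$ is convex (hypothesis \ref{PropertyPsiinversetothepowerConvex}) the bound $L^{p}(\psi^{-1}(\omega_{\mathbf{y}}(u,v)))^{p}$ for $\left\Vert \mathbf{y}_{u,v}\right\Vert ^{p}$ is again a control, whence $\left\Vert \mathbf{y}\right\Vert _{p\text{-var};[u,v]}\leq L\,\psi^{-1}(\omega_{\mathbf{y}}(u,v))$. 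Setting $\beta(u,v):=(\left\vert h\right\vert _{\rho\text{-var};[u,v]}/K)^{\rho}$, a control with $\beta(s,t)\leq 1$, the pointwise estimate becomes
\[
\left\Vert T_h(\mathbf{y})_{u,v}\right\Vert \leq c_{1}\bigl(L\,\psi^{-1}(\omega_{\mathbf{y}}(u,v))+K\,\beta(u,v)^{1/\rho}\bigr).
\]

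To conclude I divide by $C(L+K)$, apply $\psi$, and exploit the two scaling properties. The doubling bound $\psi(a+b)\leq\Delta_{2}(\psi(a)+\psi(b))$ (from $\psi(2\,\cdot)\leq\Delta_{2}\psi$) splits the right-hand side, and since $L/(L+K)\leq 1$ and $K/(L+K)\leq 1$ the hypothesis $\psi(\eta s)\leq\Delta_{\eta}\psi(s)$ pulls the weights out, so the first summand is at most $\Delta_{c_{1}/C}\,\psi(\psi^{-1}(\omega_{\mathbf{y}}(u,v)))=\Delta_{c_{1}/C}\,\omega_{\mathbf{y}}(u,v)$ and the second at most $\Delta_{c_{1}/C}\,\psi(\beta(u,v)^{1/\rho})$. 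By hypothesis \ref{PropertyPsicomposedRootconvex} the map $x\mapsto\psi(x^{1/\rho})$ is convex and vanishes at $0$, hence superadditive, so its composition with the control $\beta$ is again a control. Thus
\[
\psi\left(\frac{\left\Vert T_h(\mathbf{y})_{u,v}\right\Vert }{C(L+K)}\right)\leq\tilde{\omega}(u,v),\qquad \tilde{\omega}:=\Delta_{2}\,\Delta_{c_{1}/C}\bigl(\omega_{\mathbf{y}}+\psi(\beta^{1/\rho})\bigr),
\]
and $\tilde{\omega}$ is a control with $\tilde{\omega}(s,t)\leq\Delta_{2}\,\Delta_{c_{1}/C}(1+\psi(1))$. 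Since $\Delta_{\eta}\downarrow 0$ as $\eta\downarrow 0$, taking $C$ large forces $\tilde{\omega}(s,t)\leq 1$; superadditivity then gives $\sum_{i}\psi(\left\Vert T_h(\mathbf{y})_{t_{i},t_{i+1}}\right\Vert /(C(L+K)))\leq\tilde{\omega}(s,t)\leq 1$ for every dissection, which is exactly $\left\Vert T_h(\mathbf{y})\right\Vert _{\psi\text{-var};[s,t]}\leq C(L+K)$.

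The main obstacle is that the convexity assumptions and the scaling inequality hold only near $0+$, so every argument fed into $\psi$, $\psi^{-1}$ and $x\mapsto\psi(x^{1/\rho})$ must be kept inside that neighbourhood. I would handle this with the same localization device as Step 2 of the preceding theorem: insert an auxiliary factor $M$ into the definitions of $\omega_{\mathbf{y}}$ and $\beta$ (using the $\Delta_{\eta}$-hypothesis to drive their total masses below the radius $\varepsilon$ of the convexity neighbourhood) and absorb $M$ into the final constant $C$. Property ($d_{2}^{loc}$) is what guarantees throughout that $V_{\psi;[s,t]}(\mathbf{y}/L)\leq 1$ and that the $\psi$-variation norm behaves as in the appendix.
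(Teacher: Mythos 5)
Your proposal is correct, and it relies on the same three core ingredients as the paper's own proof: the pointwise CC-norm estimate via equivalence of homogeneous norms and Young's inequality, hypothesis \ref{PropertyPsiinversetothepowerConvex} to convert the (normalized) $\psi$-variation control of $\mathbf{y}$ into a $p$-variation bound, and hypothesis \ref{PropertyPsicomposedRootconvex} to make the $h$-contribution superadditive. What differs is the assembly. The paper first proves the additive estimate for the un-normalized functional, $V_{\psi ;\left[ s,t\right] }\left( T_{h}\left( \mathbf{y}\right) \right) \lesssim V_{\psi ;\left[ s,t\right] }\left( \mathbf{y}\right) +\psi \left( \left\vert h\right\vert _{\rho \text{-var};\left[ s,t\right] }\right) $, i.e.\ (\ref{EqVpsiTranslateEstimate}), and only then converts it into the norm inequality through the infimum-splitting observation (\ref{EqInfimum}) and the two-term analysis $\left( I\right) +\left( II\right) $; its localization is done up front, exploiting homogeneity of the Luxemburg-type norm under dilation $\delta _{\lambda }$ so that all increments may be assumed small. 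You instead normalize from the start: dividing by $C\left( L+K\right) $, you dominate $\psi \left( \left\Vert T_{h}\left( \mathbf{y}\right) _{u,v}\right\Vert /\left( C\left( L+K\right) \right) \right) $ by a single control $\tilde{\omega}$ and use $\Delta _{c_{1}/C}\downarrow 0$ to choose $C$, once and for all, so that $\tilde{\omega}\left( s,t\right) \leq 1$; the norm bound then drops out directly from the definition, with no infimum manipulation, and the quasi-subadditivity step is automatically applied to arguments near $0+$ once $C$ is large. Your route is shorter and extracts the constant more transparently; the paper's route produces the intermediate estimate (\ref{EqVpsiTranslateEstimate}), which is of independent interest, and its dilation trick localizes the raw increments cleanly. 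One caveat, which applies to your sketch and to the paper in equal measure: shrinking the mass of a normalized control by inserting the factor $M$ applies the scaling inequality $\psi \left( \eta s\right) \leq \Delta _{\eta }\psi \left( s\right) $ to arguments $s$ of size up to $\psi ^{-1}\left( 1\right) $ (cf.\ Lemma \ref{LemmaIncrementSmallerNorm}) rather than only near $0+$; this is exactly how the paper itself uses the hypothesis in Step 2 of its lifting theorem, and it is harmless for $\psi \equiv \psi _{p,2}$, so it is not a gap relative to the paper's own standard of rigor.
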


\begin{proof}
Note that (\ref{EqNormTranslateRoughPathbyCM}) is equivalent (by homogenity
of the norm) to the statement that for all $\lambda >0$ small enough
(depending on $\mathbf{y},h$ and $\psi $)%
\begin{equation*}
\left\Vert T_{\lambda h}\left( \delta _{\lambda }\mathbf{y}\right)
\right\Vert _{\psi -\text{var;}\left[ s,t\right] }\leq C\left( \left\Vert
\delta _{\lambda }\mathbf{y}\right\Vert _{\psi -\text{var;}\left[ s,t\right]
}+\left\vert \lambda h\right\vert _{\psi \text{-var;}\left[ s,t\right]
}\right) \text{.}
\end{equation*}%
Therefore we can assume in the rest of the proof that the variation "norms"
of $y$ and $h$ are arbitrary small (and therefore the increments as well,
cf.\ Lemma \ref{LemmaIncrementSmallerNorm}). By equivalence of homogenous
norms, the CC\ norm of the group element $T_{h}\left( \mathbf{y}\right)
_{s,t}$ is estimated by%
\begin{eqnarray*}
\left\Vert T_{h}\left( \mathbf{y}\right) _{s,t}\right\Vert &\lesssim
&\left\vert h_{s,t}+\mathbf{y}_{s,t}^{1}\right\vert \\
&&+\sqrt{\left\vert \mathbf{y}_{s,t}^{2}\right\vert }+\sqrt{\left\vert
\int_{s}^{t}h_{s,r}\otimes \mathrm{d}y_{r}\right\vert }+\sqrt{\left\vert
\int_{s}^{t}y_{s,r}\otimes \mathrm{d}h_{r}\right\vert }+\sqrt{\left\vert
\int_{s}^{t}h_{s,r}\otimes \mathrm{d}h_{r}\right\vert }.
\end{eqnarray*}%
By assumption $\psi $ is "quasi-subadditive" in the sense that 
\begin{equation*}
\psi \left( s+t\right) \leq \psi \left( 2\max \left( s,t\right) \right) \leq
\Delta _{2}\psi \left( \max \left( s,t\right) \right) \leq \Delta _{2}\left(
\psi \left( s\right) +\psi \left( t\right) \right) ,
\end{equation*}%
as long as $s,t\in \left[ 0,1\right] $, say. Absorbing such constants into $%
\lesssim $ we have 
\begin{eqnarray*}
\psi \left( \left\Vert T_{h}\mathbf{y}_{s,t}\right\Vert \right) &\lesssim
&\left( \psi \left( \left\Vert \mathbf{y}_{s,t}\right\Vert \right) +\psi
\left( \left\Vert S_{2}\left( h\right) _{s,t}\right\Vert \right) \right. \\
&&\left. +\psi \left( \sqrt{\left\vert \int_{s}^{t}h_{s,r}\otimes \mathrm{d}%
y_{r}\right\vert }\right) +\psi \left( \sqrt{\left\vert
\int_{s}^{t}y_{s,r}\otimes \mathrm{d}h_{r}\right\vert }\right) \right) .
\end{eqnarray*}%
By Young's inequality and using $\psi \left( cs\right) \leq \Delta _{c}\psi
\left( s\right) $ we see $\psi \left( \left\Vert S_{2}\left( h\right)
_{s,t}\right\Vert \right) \lesssim \psi \left( \left\vert h\right\vert
_{\rho -\text{var;}\left[ s,t\right] }\right) $. We now show that 
\begin{equation*}
\psi \left( \sqrt{\left\vert \int_{s}^{t}y_{s,r}\otimes \mathrm{d}%
h_{r}\right\vert }\right) \leq V_{\psi ;\left[ s,t\right] }\left( y\right)
+\psi \left( \left\vert h\right\vert _{\rho \text{-var};\left[ s,t\right]
}\right) .
\end{equation*}%
To this end, we note that $\left\vert y_{s,t}\right\vert \leq \psi
^{-1}\left( V_{\psi ;\left[ s,t\right] }\left( y\right) \right) $ and since
and since $\left( \psi ^{-1}\left( x\right) \right) ^{p}$ is convex near $0+$
the right hand side of $\left\vert y_{s,t}\right\vert ^{p}\leq \left[ \psi
^{-1}\left( V_{\psi ;\left[ s,t\right] }\left( y\right) \right) \right] ^{p}$
is a control from which it follows that $\left\vert y\right\vert _{p\text{%
-var;}\left[ s,t\right] }\leq \left[ \psi ^{-1}\left( V_{\psi ;\left[ s,t%
\right] }\left( y\right) \right) \right] $ and we rewrite this as%
\begin{equation*}
\psi (\left\vert y\right\vert _{p\text{-var;}\left[ s,t\right] })\leq \left(
V_{\psi ;\left[ s,t\right] }\left( y\right) \right) .
\end{equation*}%
Using Young's inequality ($1/p+1/\rho >1$), quasi-subadditivity, and the
estimate of the previous line, 
\begin{eqnarray}
\psi \left( \sqrt{\left\vert \int_{s}^{t}y_{s,r}\otimes \mathrm{d}%
h_{r}\right\vert }\right) &\leq &\psi \left( c\sqrt{\left\vert y\right\vert
_{p\text{-var;}\left[ s,t\right] }\left\vert h\right\vert _{\rho \text{-var};%
\left[ s,t\right] }}\right)  \notag \\
&\lesssim &\psi \left( \left\vert y\right\vert _{p\text{-var;}\left[ s,t%
\right] }\right) +\psi \left( \left\vert h\right\vert _{\rho \text{-var};%
\left[ s,t\right] }\right) \text{ }  \notag \\
&\leq &V_{\psi ;\left[ s,t\right] }\left( y\right) +\text{\ }\psi \left(
\left\vert h\right\vert _{\rho \text{-var};\left[ s,t\right] }\right) .
\label{estimate2ndlevel}
\end{eqnarray}%
What makes this estimate useful is that $V_{\psi ;\left[ s,t\right] }$ and $%
\psi (\left\vert h\right\vert _{\rho \text{-var};\left[ s,t\right] })$,
thanks to assumption \ref{PropertyPsicomposedRootconvex}, are controls and
hence super-additive in $\left( s,t\right) $. Replacing $s,t$ by some
arbitrary interval in a dissection $D$ of $\left[ s,t\right] $, following by
summation and taking $\sup_{D\subset \left[ s,t\right] }$ then gives%
\begin{eqnarray*}
V_{\psi ;\left[ s,t\right] }\left( \sqrt{\left\vert \int y_{.,r}\otimes 
\mathrm{d}h_{r}\right\vert }\right) &\equiv &\sup_{D\subset \left[ s,t\right]
}\sum_{i:t_{i}\in D}\psi \left( \sqrt{\left\vert
\int_{t_{i}}^{t_{i+1}}y_{t_{i},r}\otimes \mathrm{d}h_{r}\right\vert }\right)
\\
&\lesssim &V_{\psi ;\left[ s,t\right] }\left( \mathbf{y}\right) +\psi \left(
\left\vert h\right\vert _{\rho \text{-var};\left[ s,t\right] }\right) .
\end{eqnarray*}%
Of course, the same arguments applies to the other mixed integral,%
\begin{equation*}
V_{\psi ;\left[ s,t\right] }\left( \sqrt{\left\vert \int h_{.,r}\otimes 
\mathrm{d}y_{r}\right\vert }\right) \lesssim V_{\psi ;\left[ s,t\right]
}\left( \mathbf{y}\right) +\psi \left( \left\vert h\right\vert _{\rho \text{%
-var};\left[ s,t\right] }\right) .
\end{equation*}%
Thus, we have shown so far that%
\begin{equation}
V_{\psi ;\left[ s,t\right] }\left( T_{h}\left( \mathbf{y}\right) \right)
\lesssim V_{\psi ;\left[ s,t\right] }\left( \mathbf{y}\right) +\psi \left(
\left\vert h\right\vert _{\rho \text{-var};\left[ s,t\right] }\right) .
\label{EqVpsiTranslateEstimate}
\end{equation}%
To prepare for the remainder of the argument consider $a_{\varepsilon
},b_{\varepsilon },c_{\varepsilon }$ all decreasing in $\varepsilon \in
\left( 0,\infty \right) $ such that $a_{\varepsilon }\leq b_{\varepsilon
}+c_{\varepsilon }\forall \varepsilon $. Then%
\begin{eqnarray}
\inf \left\{ \varepsilon >0:a_{\varepsilon }\leq 1\right\} &\leq &\inf
\left\{ \varepsilon >0:b_{\varepsilon }+c_{\varepsilon }\leq 1\right\} 
\notag \\
&\leq &\inf \left\{ \varepsilon >0:b_{\varepsilon }\leq 1/2\right\}  \notag
\\
&&\vee \inf \left\{ \varepsilon >0:c_{\varepsilon }\leq 1/2\right\} .
\label{EqInfimum}
\end{eqnarray}%
We now use (\ref{EqVpsiTranslateEstimate}), (\ref{EqInfimum}),%
\begin{eqnarray*}
\allowbreak \left\Vert T_{h}\left( \mathbf{y}\right) \right\Vert _{\psi -%
\text{var;}\left[ s,t\right] } &\equiv &\inf \left( \varepsilon >0:V_{\psi 
\text{;}\left[ s,t\right] }\left( \delta _{1/\varepsilon }\left( T_{h}\left( 
\mathbf{y}\right) \right) \right) \leq 1\right) \\
&\leq &\inf \left( \varepsilon >0:c_{1}V_{\psi \text{;}\left[ s,t\right]
}\left( \delta _{1/\varepsilon }\mathbf{y}\right) +c_{1}\psi \left(
\left\vert h\right\vert _{\rho \text{-var};\left[ s,t\right] }/\varepsilon
\right) \leq 1\right) \\
&\leq &\inf \left( \varepsilon >0:c_{1}V_{\psi \text{;}\left[ s,t\right]
}\left( \delta _{1/\varepsilon }\mathbf{y}\right) \leq \frac{1}{2}\right) \\
&&\vee \inf \left( \varepsilon >0:c_{1}\psi \left( \left\vert h\right\vert
_{\rho \text{-var};\left[ s,t\right] }/\varepsilon \right) \leq \frac{1}{2}%
\right) \\
&=&\left( I\right) +\left( II\right)
\end{eqnarray*}%
To deal with $\left( II\right) $, choose $\varepsilon =c_{2}\left\vert
h\right\vert _{\rho \text{-var};\left[ s,t\right] }$ and then $c_{2}$ large
enough so that $c_{1}\psi \left( 1/c_{2}\right) \leq 1/2$. For this $c_{2}$
we then have $\left( II\right) \leq c_{2}\left\vert h\right\vert _{\rho 
\text{-var};\left[ s,t\right] }$. To deal with $\left( I\right) $, we choose 
$\varepsilon =$ $\left\Vert \mathbf{y}\right\Vert _{\psi -\text{var;}\left[
s,t\right] }/\eta $ and using the assumption on $\Delta _{\eta }$%
\begin{eqnarray*}
&\leq &c_{3}\inf \left( \varepsilon >0:V_{\psi \text{;}\left[ s,t\right]
}\left( \delta _{1/\varepsilon }\mathbf{y}\right) \leq 1\right) \\
&&\vee c_{2}\left\vert h\right\vert _{\rho \text{-var};\left[ s,t\right] } \\
&\lesssim &\left\Vert \mathbf{y}\right\Vert _{\psi -\text{var;}\left[ s,t%
\right] }+\left\vert h\right\vert _{\rho \text{-var;}\left[ s,t\right] }.
\end{eqnarray*}
\end{proof}

\textbf{Acknowledgement:} The authors would like to thank the Mittag-Leffler
Institute for its hospitality. They are indebted to Nicolas Victoir and
Terry Lyons for helpful conversations.

\bibliographystyle{plain}
\bibliography{roughpaths}

\end{document}